\documentclass[10pt]{article}

\usepackage[utf8]{inputenc}
\usepackage[T1]{fontenc}
\usepackage{amsfonts}
\usepackage{amsmath}
\usepackage{amsthm}
\usepackage{amssymb}
\usepackage[english]{babel}
\usepackage{caption}
\usepackage{color}
\usepackage{enumerate}
\usepackage{etoolbox}
\usepackage[a4paper]{geometry}
\usepackage{graphicx}
\usepackage{mathabx}
\usepackage{mathtools}
\usepackage{multicol}
\usepackage{multirow}
\usepackage{pgfplots}
\usepackage{subcaption}
\usepackage{tikz}
\usetikzlibrary{arrows,backgrounds,calc,patterns,plotmarks}
\usepackage{url}
\usepackage{wasysym}
\usepackage{xcolor}

\makeatletter
\patchcmd{\maketitle}{\@fnsymbol}{\@arabic}{}{}  
\makeatother

\title{Curve Diagrams, Laminations, and the Geometric Complexity of Braids}
\author{Vincent Jugé
\thanks{Mines ParisTech, 60 boulevard Saint-Michel, 75272 Paris Cedex 06, France}
\thanks{Université Paris Diderot, Sorbonne Paris Cité, LIAFA, UMR 7089 CNRS, F-75205 Paris, France.}}
\date{\today}

\setlength{\parindent}{0mm}
\setlength{\parskip}{2mm}

\definecolor{verypalegray}{RGB}{242,242,242}
\definecolor{palegray}{RGB}{220,220,220}
\definecolor{gray}{RGB}{195,195,195}
\definecolor{darkgray}{RGB}{170,170,170}
\definecolor{blackgray}{RGB}{145,145,145}

\newcommand{\NN}{\mathbb{N}}
\newcommand{\ZZ}{\mathbb{Z}}

\newcommand{\RR}{\mathbb{R}}
\newcommand{\CC}{\mathbb{C}}
\newcommand{\conc}[2]{\mathbf{L}_{#1}^{#2}}

\newcommand{\ba}{\mathbf{a}}

\newcommand{\bs}{\mathbf{s}}
\newcommand{\bt}{\mathbf{t}}
\newcommand{\bu}{\mathbf{u}}

\newcommand{\bC}{\mathbf{C}}
\newcommand{\bD}{\mathbf{D}}

\newcommand{\bL}{\mathbf{L}}

\newcommand{\bS}{\mathbf{S}}
\newcommand{\bT}{\mathbf{T}}

\newcommand{\calB}{\mathcal{B}}
\newcommand{\fraB}{\mathfrak{B}}
\newcommand{\calC}{\mathcal{C}}

\newcommand{\calD}{\mathcal{D}}

\newcommand{\calG}{\mathcal{G}}
\newcommand{\calL}{\mathcal{L}}

\newcommand{\calN}{\mathcal{N}}
\newcommand{\calO}{\mathcal{O}}
\newcommand{\calP}{\mathcal{P}}
\newcommand{\calQ}{\mathcal{Q}}

\newcommand{\calZ}{\mathcal{Z}}

\newcommand{\puncture}[1]{
\draw[fill=white,draw=black,thick] (#1,0) circle (1.25);}
\newcommand{\PUNCTURE}[1]{
\draw[fill=black,draw=black,thick] (#1,0) circle (1.25);}
\newcommand{\puncturee}[2]{
\draw[fill=white,draw=black,thick] (#1,0) circle (#2);}
\newcommand{\PUNCTUREE}[2]{
\draw[fill=black,draw=black,thick] (#1,0) circle (#2);}

\newtheorem{theorem}{Theorem}[section]
\newtheorem{lemma}[theorem]{Lemma}
\newtheorem{corollary}[theorem]{Corollary}
\newtheorem{proposition}[theorem]{Proposition}
\newtheorem{definition}[theorem]{Definition}
\newtheorem{example}[theorem]{Example}
\newtheorem{conjecture}[theorem]{Conjecture}

\newenvironment{thm}{\begin{theorem}~\\}{\end{theorem}}
\newenvironment{thm*}[1]{\begin{theorem}[#1]~\\}{\end{theorem}}
\newenvironment{lem}{\begin{lemma}~\\}{\end{lemma}}
\newenvironment{lem*}[1]{\begin{lemma}[#1]~\\}{\end{lemma}}
\newenvironment{cor}{\begin{corollary}~\\}{\end{corollary}}
\newenvironment{cor*}[1]{\begin{corollary}[#1]~\\}{\end{corollary}}
\newenvironment{pro}{\begin{proposition}~\\}{\end{proposition}}
\newenvironment{pro*}[1]{\begin{proposition}[#1]~\\}{\end{proposition}}
\newenvironment{dfn}[1]{\begin{definition}[#1]~\\}{\end{definition}}

\newenvironment{xam*}[1]{\begin{example}[#1]\rm~\\}{\end{example}}
\newenvironment{conj}{\begin{conjecture}\rm~\\}{\end{conjecture}}
\newenvironment{conj*}[1]{\begin{conjecture}[#1]\rm~\\}{\end{conjecture}}

\begin{document}

\maketitle

\begin{abstract}
Braids can be represented geometrically as curve diagrams.
The geometric complexity of a braid is the minimal complexity of a curve diagram representing it.
We introduce and study the corresponding notion of geometric generating function.
We compute explicitly the geometric generating function for the group of braids on three strands and prove that it is
neither rational nor algebraic, nor even holonomic.
This result may appear as counterintuitive.
Indeed, the standard complexity (due to the Artin presentation of braid groups) is
algorithmically harder to compute than the geometric complexity,
yet the associated generating function for the group of braids on three strands is rational.
\end{abstract}

\section{Introduction}
\label{section:introduction}

Braid groups can be approached from various points of view,
including algebraic and geometric ones.

The algebraic point of view is based on finite presentations
of the group of braids, involving several possible generating families,
the most famous being the families of Artin generators~\cite{Artin-1926} and Garside generators~\cite{garside1969braid}.
Artin generators are usually considered as the most ``natural'' generators,
but Garside generators have proved to be more tractable in
answering several algorithmic and combinatorial questions.

Given a finite presentation, a braid is identified with a set of words.
Each word has a complexity, which is its length, and the complexity
of the braid is the minimal complexity of the words that represent it.
Then, the generating function, or growth series, of the group $B_n$ of $n$-strand braids,
is defined by $\fraB_n(z) = \sum_{\beta \in B_n} z^{|\beta|}$, where
$|\beta|$ is the complexity of the braid $\beta$.

Both the complexity of the braid and the corresponding generating function depend on the generators.
A natural question is to compute the generating function.
Each group $B_n$ has a rational generating function for Garside generators~\cite{Charney1995}.
It is also known that the group $B_3$ has a rational generating function for Artin generators~\cite{mairesse2006growth,2003sabalka},
but no such result is known to hold for the groups $B_n$ with $n \geq 4$.

The above-mentioned results are obtained by using clever normal forms.
A normal form consists in selecting a representative word for each braid.
Several questions appear immediately:
does there exist computable normal forms?
regular normal forms?
regular and geodesic normal forms?
Whereas the answers to the first two questions can be shown to be intrinsic
i.e. independent of the generating family,
the answer to the third question is specific to each family of generators~\cite{stoll}.
When a regular and geodesic normal form exists, the generating function is rational and
computing it is straightforward when considering an automaton recognising the normal form~\cite{Epstein:1992:WPG:573874}.

For the braid groups, the symmetric Garside normal form is a well-known regular geodesic normal form for the Garside generators~\cite{garside1969braid}.
On the contrary, the existence of a regular geodesic normal form for the Artin generators is a famous open question.

At first glance, the geometric approach seems quite different from its algebraic counterpart:
braids are no longer considered as sets of words but as sets of drawings.
In the geometric world, each drawing has a complexity (e.g. the number of intersections it has with some fixed set of curves),
and the complexity of a braid is the minimal complexity of the drawings that represent it~\cite{dynnikov:hal-00001267}.
Like in the algebraic case, the group $B_n$ has a ``geometric generating function'', or geometric growth series, defined by
$\calB_n(z) = \sum_{\beta \in B_n} z^{\|\beta\|}$, where $\|\beta\|$ is the geometric complexity of the braid $\beta$.

The general goal of this paper is to study the notion of geometric generating function, which, to the best of our knowledge,
has not been explored yet in the literature.
The first step is to identify a ``geodesic normal form'',
i.e. a set $S$ of drawings such that each braid is represented by one unique drawing in $S$,
and such that this drawing has a minimal geometric complexity.
Here, the relevant geodesic normal form will be related to the notion of
tight curve diagram (see Fig.~\ref{fig:intro:lamination-curve}), which was already studied in~\cite{dynnikov:hal-00001267}.

\begin{figure}[!ht]
\begin{center}
\begin{tikzpicture}[scale=0.16]
\draw[fill=palegray,draw=palegray] (20,0) circle (10);

\draw[draw=black,ultra thick] (12,0) -- (12,-11);
\draw[draw=black,ultra thick] (22,0) -- (22,11);
\draw[draw=black,ultra thick] (23,0) -- (23,11);
\draw[draw=black,ultra thick] (28,0) -- (28,-11);

\draw[draw=black,ultra thick] (12,0) arc (180:0:3);
\draw[draw=black,ultra thick] (13,0) arc (180:0:2);
\draw[draw=black,ultra thick] (24,0) arc (180:0:2);

\draw[draw=black,ultra thick] (13,0) arc (180:360:5.5);
\draw[draw=black,ultra thick] (17,0) arc (180:360:3);
\draw[draw=black,ultra thick] (18,0) arc (180:360:2);

\draw[draw=black,ultra thick] (10,0) -- (30,0);

\draw[draw=white,thick] (10,0) -- (30,0);

\PUNCTURE{10}
\PUNCTURE{30}

\puncture{15}
\puncture{20}
\puncture{26}
\end{tikzpicture}
{\tiny~}
\begin{tikzpicture}[scale=0.19555]
\draw[fill=palegray,draw=palegray] (14,0) circle (8);

\draw[draw=black,ultra thick] (12,-9) -- (12,9);
\draw[draw=black,ultra thick] (16,-9) -- (16,9);

\draw[draw=black,ultra thick] (6,0) arc (180:0:4);
\draw[draw=black,ultra thick] (14,0) arc (0:180:3);
\draw[draw=black,ultra thick] (8,0) arc (180:360:5);
\draw[draw=black,ultra thick] (18,0) arc (360:180:4);
\draw[draw=black,ultra thick] (10,0) arc (180:360:3);
\draw[draw=black,ultra thick] (16,0) arc (180:0:3);

\draw[draw=white,thick] (6,0) arc (180:0:4);
\draw[draw=white,thick] (14,0) arc (0:180:3);
\draw[draw=white,thick] (8,0) arc (180:360:5);
\draw[draw=white,thick] (18,0) arc (360:180:4);
\draw[draw=white,thick] (10,0) arc (180:360:3);
\draw[draw=white,thick] (16,0) arc (180:0:3);

\PUNCTUREE{6}{1.0227}
\PUNCTUREE{22}{1.0227}

\puncturee{10}{1.0227}
\puncturee{14}{1.0227}
\puncturee{18}{1.0227}
\end{tikzpicture}
\end{center}
\caption{Tight lamination (left) and tight curve diagram (right) representing the same braid}
\label{fig:intro:lamination-curve}
\end{figure}
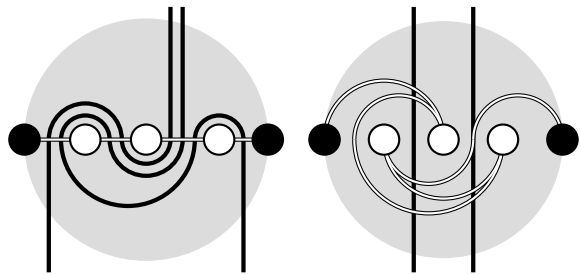

The geometric complexity is arguably easier to compute than the length complexity.
Indeed, consider a fixed braid group $B_n$ and the Artin generators.
On the one hand, computing the length complexity of a braid represented
by a word of length $k$ requires up to $2^{\calO(k)}$ operations:
one has to check braid equality for all the words of length less than $k$.
On the other hand, there exist algorithms
that compute in $\calO(k)$ operations the geometric complexity of a braid represented by a drawing with complexity $k$~\cite{vincent-relax-droite}.

The main result of this paper goes in the opposite direction:
we show that, even in the simple case of the group $B_3$, the geometric generating function $\calB_3(z)$ is not rational
(see Theorem~\ref{thm:G-and-L3}).
This is in sharp contrast with the length-based generating functions of $B_3$ for Artin and Garside generators,
which are rational, as recalled above.
A precise statement of our result on $B_3$ is the following one.
{
 \renewcommand{\thetheorem}{\ref{thm:G-and-L3}}
 \begin{thm}
 Let $\calB_3(z) = \sum_{\beta \in B_3} z^{\|\beta\|}$ be the ``geometric generating function'' associated
 to the geometric norm on braids (see~\cite{dynnikov:hal-00001267}).
 We have 
\[\calB_3(z) = 2 \frac{1+2z^2-z^4}{z^2(1-z^4)} \left(\sum_{n \geq 3} \varphi(n) z^{2n}\right) + \frac{z^2(1-3z^4)}{1-z^4},\]
where $\varphi$ denotes the Euler totient.
The function $\calB_3(z)$ is neither rational nor algebraic nor holonomic.
 \end{thm}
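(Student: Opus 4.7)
My plan is to (i) derive the closed form by classifying tight curve diagrams of $3$-braids through the Farey structure, and then (ii) reduce the three non-closure questions to corresponding properties of the Lambert-type series $G(z) = \sum_{n \geq 1} \varphi(n) z^n$.

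\emph{Deriving the formula.} The key structural input is that the center of $B_3$ is $Z = \langle \Delta^2 \rangle$ with $\Delta = \sigma_1 \sigma_2 \sigma_1$, and that the quotient $B_3/Z$ is isomorphic to $\mathrm{PSL}_2(\ZZ)$. The latter acts on the set of isotopy classes of essential simple closed curves in the three-punctured disk, which are in bijection with primitive pairs $(p,q) \in \ZZ^2$, $\gcd(p,q) = 1$, via the Farey correspondence. I would first show that the geometric complexity of a non-central braid $\beta$ lifting $\bar\beta \in \mathrm{PSL}_2(\ZZ)$ decomposes as $\|\beta\| = 2 n(\bar\beta) + c(\beta)$, where $n(\bar\beta)$ is the ``Farey size'' of the tight curve diagram of $\bar\beta$ (the maximum of $|p|,|q|$ in its encoding primitive pair) and $c(\beta)$ is a bounded correction controlled by the $Z$-coset of $\beta$. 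A direct count yields $2\varphi(n)$ non-central elements of $\mathrm{PSL}_2(\ZZ)$ with Farey size equal to $n$, for every $n \geq 3$. Summing the contributions of the cosets $\Delta^{2k} \bar\beta$ with $k \in \ZZ$ produces a geometric series in $z^4$ and yields the rational factor $2(1 + 2z^2 - z^4)/(z^2 (1 - z^4))$, while the finitely many exceptional small-complexity braids (the identity, $\sigma_i^{\pm 1}$, powers of $\Delta^2$, and braids of Farey size at most $2$) contribute the residual rational summand $z^2 (1 - 3z^4)/(1 - z^4)$.

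\emph{Reduction to $G$.} The closed form can be rewritten as $\calB_3(z) = R_1(z)\, G(z^2) + R_2(z)$, with $R_1, R_2$ explicit nonzero rational functions, after absorbing the terms $n \in \{1,2\}$ of $G$ into $R_2$. Since the classes of rational, algebraic, and holonomic power series are each closed under rational operations and under the substitution $z \mapsto z^2$, it suffices to show that $G$ is neither rational, nor algebraic, nor holonomic. Non-rationality is immediate: a rational series has coefficients satisfying a linear recurrence with constant coefficients, whereas the simultaneous behaviours $\varphi(p) = p - 1$ at primes and $\varphi(2^k) = 2^{k-1}$ along powers of two prevent any such recurrence from holding.

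\emph{Main obstacle.} The real difficulty is the joint proof of non-algebraicity and non-holonomicity. I plan to show that every root of unity is a singular point of $G$: since algebraic and D-finite functions each possess only finitely many singularities in $\CC$ (in the D-finite case, located at the zeros of the leading polynomial coefficient of the defining ODE), this density of singularities on the unit circle simultaneously rules out both properties. The density can be extracted from the classical Lambert-series identity $\sum_{n \geq 1} \varphi(n) z^n / (1 - z^n) = z/(1-z)^2$ via an Abel/Tauberian analysis near each primitive $k$-th root of unity; alternatively, one can follow the Flajolet--Gerhold--Salvy approach, reducing non-holonomicity to the Mellin identity $\sum \varphi(n) n^{-s} = \zeta(s-1)/\zeta(s)$ and invoking the fact that the non-trivial zeros of $\zeta$ produce infinitely many singularities in every vertical strip. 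The combinatorial identification of the $2\varphi(n)$ count in the first step is expected to be technical but routine (essentially an enumeration of Farey fractions and a bookkeeping of boundary effects from the center), whereas this final analytic step is where the genuine subtlety lies.
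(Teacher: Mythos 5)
Your reduction of non-rationality, non-algebraicity, and non-holonomicity to the corresponding statements about $G(z) = \sum_{n\ge 1}\varphi(n)z^n$ is sound, and your complex-analytic route (dense singularities on the unit circle via the Mellin identity $\sum\varphi(n)n^{-s}=\zeta(s-1)/\zeta(s)$ or a Flajolet--Gerhold--Salvy argument) is a valid alternative; the paper instead gives a self-contained elementary argument via CRT and Dirichlet's theorem that avoids analysis on the critical strip. However, the first half of your plan -- the derivation of the closed form -- has a fatal structural flaw.

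You posit a decomposition $\|\beta\| = 2n(\bar\beta) + c(\beta)$ with $c$ a bounded correction ``controlled by the $Z$-coset,'' and claim that summing over the central cosets $\Delta^{2k}\bar\beta$ produces a geometric series in $z^4$. Neither claim can be right, because the geometric norm does not behave additively (or even boundedly) under multiplication by the center. The paper records the identity $\|(\sigma_1\sigma_2)^k\|_\ell = 2(F_{2k+3}-1)$ with $F$ the Fibonacci sequence, so $\|\Delta^{2k}\| = \|(\sigma_1\sigma_2)^{3k}\| = 2(F_{6k+3}-1)$ grows \emph{exponentially} in $k$. The full twist $\Delta^2$ wraps the curve diagram arc around the boundary, and each wrap multiplies rather than increments the intersection number. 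Consequently the lifts of a fixed $\bar\beta\in PSL_2(\ZZ)$ do not occupy an arithmetic progression of norms, the ``bounded correction'' $c$ does not exist, and there is no geometric series in $z^4$ coming from the center. (If $c$ really were bounded and a function of the coset, infinitely many braids would share a norm, contradicting the finiteness of each coefficient $g_{3,k}$.) The factor $1/(1-z^4)$ in the paper's formula has a completely different origin: it arises from reindexing the double sum over the coordinate parameters $(k,\ell)$, not from a central extension. The paper's actual derivation replaces your Farey picture with an explicit integer coordinate system on tight curve diagrams, reduces tightness to the cyclicity of a certain permutation of $\ZZ_{\ell+1}$ (a ``translated cut''), and the Euler totient appears because cyclicity reduces to a coprimality condition -- a combinatorially different, and correct, route to the same arithmetic.
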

 \addtocounter{theorem}{-1}
}
In addition to this precise computation, we also estimate the series $\calB_n(z)$ with $n \geq 4$.

To obtain these results, we use several tools that also do have an intrinsic interest.
First, we use the connection between the complexities associated to two dual geometric representations of the braids,
the \emph{braid laminations} and the \emph{curve diagrams}, and in particular the fact that both complexities
yield the same generating functions.
Second, we design a system of integer-valued coordinates that capture directly
the geometric representations of braids, and from which computing the geometric complexity of braids is straightforward.
This system of coordinates is analogous to that of Dynnikov~\cite{Dyn02},
and the algorithms for computing both kinds of coordinates have similar flavours.
Thus, they have comparable (low) complexities.
However, our new system has specific advantages, since it is 
better suited for computing braid complexities,
and therefore for computing geometric generating functions.

\section{Braids, Integral Laminations and Curve Diagrams}
\label{section:braids-and-integral-laminations}

In Sections~\ref{section:braids-and-integral-laminations} and~\ref{section:from-laminations-to-graphs},
we mention standard definitions and theorems about
braids, \emph{integral laminations} and to \emph{curve diagrams},
and the notion of pulling tight a curve with respect to a set of punctures and to another curve.
These definitions come from algebraic topology
as well as from discrete group theory.
This material can be found in standard literature, e.g. in~\cite{birman-braids-links-mcg,Dehornoy_whyare,dynnikov:hal-00001267,farb2011primer,Fenn_orderingthe}.
However, we prefer to recall such material here, in order to use it subsequently.

\subsection{Braids}
\label{subsection:braids}

The group of braids on $n$ strands was originally introduced by Artin~\cite{Artin-1926},
who came with the following algebraic description.

\begin{dfn}{Braid group}
The group of braids on $n$ strands is the group
\[B_n = \left\langle \sigma_1, \dots, \sigma_{n-1} \mid
\sigma_i \sigma_{i+1} \sigma_i = \sigma_{i+1} \sigma_i \sigma_{i+1},
\sigma_i \sigma_j = \sigma_j \sigma_i \text{ if } |i-j| \geq 2 \right\rangle.\]
\end{dfn}

This finite presentation of the group of braids
comes along with the representation of braids
as an isotopy class of \emph{braid diagrams}, as illustrated in Figure~\ref{fig:strands}.

\begin{figure}[!ht]
\begin{center}
\begin{tikzpicture}[scale=0.20]
\draw[draw=black,thick] (8,0) -- (22,0);
\draw[draw=black,thick] (8,2) -- (22,2);
\draw[draw=black,thick] (8,5) -- (22,5);
\draw[draw=black,thick] (8,7) -- (13,7) -- (17,9) -- (22,9);
\draw[draw=black,thick] (8,9) -- (13,9) -- (14.75,8.125);
\draw[draw=black,thick] (15.25,7.875) -- (17,7) -- (22,7);
\draw[draw=black,thick] (8,11) -- (22,11);
\draw[draw=black,thick] (8,14) -- (22,14);
\draw[draw=black,thick] (8,16) -- (22,16);
\node[anchor=east] at (8,0) {$1$};
\node[anchor=east] at (8,2) {$2$};
\node[anchor=east] at (8,5) {$i-1$};
\node[anchor=east] at (8,7) {$i$};
\node[anchor=east] at (8,9) {$i+1$};
\node[anchor=east] at (8,11) {$i+2$};
\node[anchor=east] at (8,14) {$n-1$};
\node[anchor=east] at (8,16) {$n$};
\node at (15,3.9) {$\vdots$};
\node at (15,12.9) {$\vdots$};
\end{tikzpicture}
\end{center}
\caption{Braid diagram of the generator $\sigma_i$ ($1 \leq i \leq n-1$)}
\label{fig:strands}
\end{figure}
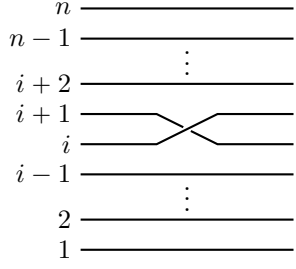

However, in this paper we focus on another, equivalent, approach
of the group of braids.
Indeed, the group of braids on $n$ strands can also be defined
as the mapping class group (also called modular group)
of the unit disk with $n$ punctures.
Let us make this statement more precise.

Let $D^2 \subseteq \CC$ be the closed unit disk,
let $\partial D^2$ be the unit circle (i.e. the boundary of $D^2$),
and let $P_n \subseteq (-1,1)$ be a set of size $n$.
We will refer below to the elements of $P_n$ as being \emph{punctures}
in the disk $D^2$, and number them from left to right: $P_n = \{p_i : 1 \leq i \leq n\}$,
with $p_1 < \ldots < p_n$.
We also call \emph{left point} the point $-1$, which we will also denote by $p_0$;
and call \emph{right point} the point $+1$, which we will also denote by $p_{n+1}$.

Then, let $H_n$ be the group of orientation-preserving homeomorphisms $h : \CC \to \CC$
such that $h(P_n) = P_n$, $h(\partial D^2) = \partial D^2$ and $h(1) = 1$, $h(-1) = -1$,
i.e. the homeomorphisms fixing $\partial D^2$ and $P_n$ setwise, and $\pm 1$ pointwise.

\begin{thm*}{see~\cite{birman-braids-links-mcg}}
The group $B_n$ of braids on $n$ strands is isomorphic to the mapping class group of the punctured disk
$D^2 \setminus P_n$, i.e. isomorphic to the quotient group of $H_n$ by the isotopy relation.
\label{thm:braids}
\end{thm*}

It is remarkable that this definition does \emph{not}
depend on which set $P_n$ of punctures we chose.
In addition, each braid appears as a class of homeomorphisms of the unit disk $D^2$,
which conveys the idea of giving a graphical representation of the braid.

For flexibility reasons, we introduce here
a slightly different characterization of
the group of braids, analogous to that of Theorem~\ref{thm:braids}.
Here, instead of considering the group $H_n$,
we denote by $H^\ast_n$ the group of orientation-preserving
homeomorphisms $h : \CC \to \CC$
such that $h(P_n) = P_n$, $h(-1) = -1$ and
$h(1) = 1$,
i.e. fixing $\{-1\}$, $\{1\}$ and $P_n$ setwise.

\begin{thm}
The group $B_n$ of braids on $n$ strands is isomorphic to the quotient group
of $H_n^\ast$ by the isotopy relation.
\label{thm:braids-2}
\end{thm}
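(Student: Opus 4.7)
The plan is to show that the natural inclusion $H_n \hookrightarrow H_n^\ast$ descends to a bijection between isotopy classes, which combined with Theorem~\ref{thm:braids} will yield the result. Since the inclusion is clearly a homomorphism, it suffices to verify surjectivity and injectivity of the induced map $\iota : H_n/{\sim} \to H_n^\ast/{\sim}$, where ${\sim}$ denotes isotopy inside the respective groups.

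For surjectivity, I would take $h \in H_n^\ast$ and look at the Jordan curve $C = h(\partial D^2) \subseteq \CC$. Since $h$ is orientation-preserving and fixes $\pm 1$ pointwise, $C$ passes through $\pm 1$; and since $h$ fixes $P_n$ setwise with $P_n$ lying in the bounded complementary component of $\partial D^2$, the bounded complementary component of $C$ still contains $P_n$. The idea is then to build an ambient isotopy $(g_t)_{t \in [0,1]}$ of $\CC$ with $g_0 = \mathrm{id}$, with each $g_t$ fixing $\pm 1$ pointwise and $P_n$ setwise, and with $g_1(C) = \partial D^2$. Such an isotopy can be produced from the Schoenflies theorem applied to $C$, together with a radial interpolation between $C$ and $\partial D^2$ inside a thin annular neighborhood chosen to avoid $P_n$. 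Then $t \mapsto g_t \circ h$ is an isotopy in $H_n^\ast$ from $h$ to $g_1 \circ h \in H_n$, so the class of $h$ lies in the image of $\iota$.

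For injectivity, an analogous but parametric argument is needed: given an isotopy $(h_t)_{t \in [0,1]}$ in $H_n^\ast$ joining $h_0, h_1 \in H_n$, the Jordan curves $C_t = h_t(\partial D^2)$ vary continuously in $t$, with $C_0 = C_1 = \partial D^2$. Applying the above straightening procedure parametrically, I would produce a continuous family $(g_t)_{t \in [0,1]}$ of self-homeomorphisms of $\CC$ fixing $\pm 1$ pointwise and $P_n$ setwise, with $g_0 = g_1 = \mathrm{id}$ and $g_t(C_t) = \partial D^2$ for every $t$. Then $t \mapsto g_t \circ h_t$ is an isotopy inside $H_n$ joining $h_0$ to $h_1$, showing that $\iota$ is injective.

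The main obstacle is the parametric Schoenflies step used for injectivity: one must arrange that the straightening isotopies depend continuously on $t$ and degenerate to the identity at $t=0$ and $t=1$. This is a standard consequence of the two-dimensional annulus theorem combined with the contractibility of the space of parametrizations of a Jordan curve with two marked points; all other steps are soft topology and pose no difficulty.
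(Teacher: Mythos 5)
The paper gives no proof of Theorem~\ref{thm:braids-2} --- it is presented as a routine variant of Theorem~\ref{thm:braids}, with the standard references on mapping class groups implicitly covering it --- so your proof stands on its own. Your overall plan (reduce to Theorem~\ref{thm:braids} by showing that $H_n \hookrightarrow H_n^\ast$ induces a bijection on isotopy classes) is the natural one, and the surjectivity step via Schoenflies is correct and complete: the ambient straightening isotopy of $h(\partial D^2)$ to $\partial D^2$ can be supported away from $P_n$ because $P_n$ lies in the bounded complementary component of both Jordan curves.

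The injectivity step carries the real content, and the justification you give for it does not quite hold up. The fact you need is exactly that the space $\mathcal{J}$ of Jordan curves through $\pm 1$ enclosing $P_n$ is simply connected: in the long exact sequence of the fibration $H_n \hookrightarrow H_n^\ast \xrightarrow{\phi} \mathcal{J}$ (where $\phi(h) = h(\partial D^2)$), that is what makes the connecting map $\pi_1\mathcal{J} \to \pi_0 H_n$ vanish, which in turn is exactly the statement that a based loop $t\mapsto C_t$ in $\mathcal{J}$ lifts to a based loop $t\mapsto g_t$ in $H_n^\ast$. Your two cited ingredients do not give this: the contractibility of $\mathrm{Homeo}^+(S^1;\pm 1)$ controls the fiber direction of the projection from parametrized to unparametrized curves, not $\pi_1$ of the base, and the annulus theorem concerns a pair of \emph{disjoint} Jordan curves, whereas $C_t$ and $\partial D^2$ always intersect at $\pm 1$ (and generically elsewhere), so it does not apply as such. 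The simple connectedness of $\mathcal{J}$ is true, but it comes from the homotopy type of the ambient homeomorphism group --- in particular the classical fact that $\mathrm{Homeo}^+(\RR^2)$ is homotopy equivalent to $SO(2)$ rather than contractible, which is what forces the relevant boundary map in the long exact sequence of $\mathrm{Homeo}^+(\CC;\partial D^2,\pm 1) \hookrightarrow \mathrm{Homeo}^+(\CC;\pm 1) \to \mathcal{J}$ to vanish --- together with the Alexander-trick contractibility of the stabilizer. I would recast the injectivity step explicitly in these fibration terms and cite the homotopy type of $\mathrm{Homeo}^+(\RR^2)$ (Kneser, or Hamstrom) rather than the annulus theorem.
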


Theorem~\ref{thm:braids-2} identifies braids to isotopy classes of self-homeomorphisms of $\CC$.
More precisely, let $\bS$ be a subset of the complex plane and
let $\beta$ be a braid, and consider the isotopy class
$\beta(\bS) = \{h(\bS) : h$ is an homeomorphism that represents $\beta\}$.
The group of braids $B_n$ acts transitively on the set $\{\beta(\bS) : \beta \in B_n\}$,
which induces an equivalence relation on the group $B_n$ itself.

We focus below on two subsets of $\CC$:
We call respectively \emph{trivial lamination} and \emph{trivial curve diagram} the sets
\[\bL := \left\{\frac{1}{2}(p_j+p_{j+1})+i\RR : j \leq 1 \leq n-1 \right\} \text{ and }\bD := [-1,1].\]
If $\bS = \bL$ or $\bS = \bD$, then the action of $B_n$ on $\{\beta(\bS) : \beta \in B_n\}$
is free (see~\cite{birman-braids-links-mcg} for details).
This means that the sets $\beta(\bS)$ and $\gamma(\bS)$ are disjoint as soon as $\beta \neq \gamma$.
Hence, each set $h(\bS)$ belongs to the set $\beta(\bS)$ for one unique braid $\beta$;
we say that $h(\bS)$ \emph{represents} the braid $\beta$.

\subsection{Laminations}
\label{subsection:integral-laminations}

\begin{dfn}{Lamination}
Let us consider the set $P_n$ of $n$ punctures inside the disk $D^2$.
We call \emph{lamination}, and denote by $\calL$, the union of $n-1$ non-intersecting open curves $\calL_1,\ldots,\calL_{n-1}$ such that
each curve $\calL_j$
\begin{itemize}
\item contains two vertical half-lines with opposite directions (i.e. sets $z_j + i \RR_{\leq 0}$ and $z'_j + i \RR_{\geq 0}$);
\item splits the plane $\CC$ into one \emph{left} region that contains the left point and $j$ punctures, and one
\emph{right} region that contains the right point and $n-j$ punctures.
\end{itemize}
\label{dfn:trivial-lamination}
\end{dfn}

Figure~\ref{fig:laminations} represents two laminations, including $\bL$, the trivial one.
In all subsequent figures,
punctures are indicated by white dots,
and the left and right points are indicated by black dots;
the gray area represents the unit disk $D^2$,
the curves of the lamination are drawn in black,
and the segment $[-1,1]$ is drawn in white.

\begin{figure}[!ht]
\begin{center}
\begin{tikzpicture}[scale=0.16]
\draw[fill=palegray,draw=palegray] (16,0) circle (8);

\draw[draw=black,ultra thick] (14,-9.5) -- (14,9.5);
\draw[draw=black,ultra thick] (18,-9.5) -- (18,9.5);

\draw[draw=black,ultra thick] (8,0) -- (24,0);

\draw[draw=white,thick] (8,0) -- (24,0);

\PUNCTURE{8}
\PUNCTURE{24}

\puncture{12}
\puncture{16}
\puncture{20}

\node at (16,-12) {Trivial lamination};
\end{tikzpicture}
{\tiny~}
\begin{tikzpicture}[scale=0.16]
\draw[fill=palegray,draw=palegray] (16.5,0) circle (8.5);

\draw[draw=black,ultra thick] (14,-9.5) -- (14,9.5);
\draw[draw=black,ultra thick] (15,0) -- (15,9.5);
\draw[draw=black,ultra thick] (23,-9.5) -- (23,0);

\draw[draw=black,ultra thick] (19,0) arc (180:0:2);
\draw[draw=black,ultra thick] (15,0) arc (180:360:2);

\draw[draw=black,ultra thick] (8,0) -- (25,0);

\draw[draw=white,thick] (8,0) -- (25,0);

\PUNCTURE{8}
\PUNCTURE{25}

\puncture{12}
\puncture{17}
\puncture{21}

\node at (16.5,-12) {Non-trivial lamination};
\end{tikzpicture}
\end{center}
\caption{Laminations}
\label{fig:laminations}
\end{figure}
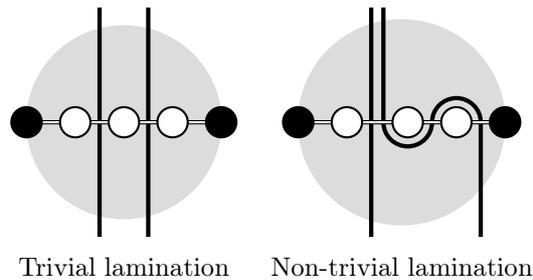

Then, following Dynnikov and Wiest~\cite{dynnikov:hal-00001267},
we define the norm of a lamination,
and the \emph{laminated norm} of a braid.

\begin{dfn}{Laminated norm and tight lamination}
Let $\beta \in B_n$ be a braid on $n$ strands,
and let $\calL$ be a lamination representing $\beta$.

The \emph{laminated norm} of $\calL$, which we denote by $\|\calL\|_\ell$, is
the cardinality of the set $\calL \cap [-1,1]$,
i.e. the number of intersection points between the segment $[-1,1]$
and the $n-1$ curves of the lamination $\calL$.

Moreover, if, among all the laminations that represent $\beta$,
the lamination $\calL$ has a minimal laminated norm, then we say that
$\calL$ is a \emph{tight} lamination.
In this case, we also define the
\emph{laminated norm} of the braid $\beta$, which we denote by $\|\beta\|_\ell$, as
the norm $\|\calL\|_\ell$.
\label{dfn:lamination-norm}
\end{dfn}

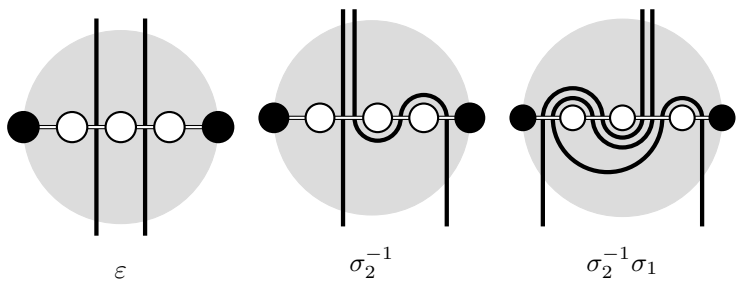
\begin{figure}[!ht]
\begin{center}
\begin{tikzpicture}[scale=0.16]
\draw[fill=palegray,draw=palegray] (16,0) circle (8);

\draw[draw=black,ultra thick] (14,-9) -- (14,9);
\draw[draw=black,ultra thick] (18,-9) -- (18,9);

\draw[draw=black,ultra thick] (8,0) -- (24,0);

\draw[draw=white,thick] (8,0) -- (24,0);

\PUNCTURE{8}
\PUNCTURE{24}

\puncture{12}
\puncture{16}
\puncture{20}

\node at (16,-12) {$\varepsilon$};
\end{tikzpicture}
{\tiny~}
\begin{tikzpicture}[scale=0.151578]
\draw[fill=palegray,draw=palegray] (16.5,0) circle (8.5);

\draw[draw=black,ultra thick] (14,-9.5) -- (14,9.5);
\draw[draw=black,ultra thick] (15,0) -- (15,9.5);
\draw[draw=black,ultra thick] (23,-9.5) -- (23,0);

\draw[draw=black,ultra thick] (19,0) arc (180:0:2);
\draw[draw=black,ultra thick] (15,0) arc (180:360:2);

\draw[draw=black,ultra thick] (8,0) -- (25,0);

\draw[draw=white,thick] (8,0) -- (25,0);

\PUNCTURE{8}
\PUNCTURE{25}

\puncture{12}
\puncture{17}
\puncture{21}

\node at (16.5,-12.667) {$\sigma_2^{-1}$};
\end{tikzpicture}
{\tiny~}
\begin{tikzpicture}[scale=0.130909]
\draw[fill=palegray,draw=palegray] (20,0) circle (10);

\draw[draw=black,ultra thick] (12,0) -- (12,-11);
\draw[draw=black,ultra thick] (22,0) -- (22,11);
\draw[draw=black,ultra thick] (23,0) -- (23,11);
\draw[draw=black,ultra thick] (28,0) -- (28,-11);

\draw[draw=black,ultra thick] (12,0) arc (180:0:3);
\draw[draw=black,ultra thick] (13,0) arc (180:0:2);
\draw[draw=black,ultra thick] (24,0) arc (180:0:2);

\draw[draw=black,ultra thick] (13,0) arc (180:360:5.5);
\draw[draw=black,ultra thick] (17,0) arc (180:360:3);
\draw[draw=black,ultra thick] (18,0) arc (180:360:2);

\draw[draw=black,ultra thick] (10,0) -- (30,0);

\draw[draw=white,thick] (10,0) -- (30,0);

\PUNCTURE{10}
\PUNCTURE{30}

\puncture{15}
\puncture{20}
\puncture{26}

\node at (20,-14.667) {$\sigma_2^{-1} \sigma_1$};
\end{tikzpicture}
\end{center}
\caption{Identifying braids to tight laminations}
\label{fig:3-laminations}
\end{figure}

Note that, although we call the mapping $\beta \mapsto \|\beta\|_\ell$ a \emph{norm},
following the seminal paper of Dynnikov and Wiest~\cite{dynnikov:hal-00001267},
this mapping does not satisfy standard properties of norms on metric spaces, such as
separation axioms (i.e. that $\beta = \varepsilon$ iff $\|\beta\|_\ell = 0$) or
sub-additivity axioms (i.e. that $\|\beta \cdot \gamma\|_\ell \leq \|\beta\|_\ell + \|\gamma\|_\ell$ for all $\beta, \gamma \in B_n$).
Counterexamples to those properties are provided by the fact that $\|\varepsilon\|_\ell = n-1$ and that
$\|(\sigma_1 \sigma_2)^k\|_\ell = 2(F_{2k+3}-1)$, where $F_k$ denotes the $k$-th Fibonacci number.

However, Dynnikov and Wiest prove in~\cite{dynnikov:hal-00001267} that
the mapping $\beta \mapsto \log \|\beta\|_\ell$ is \emph{comparable} to a norm, i.e.
that there exists positive constants $m_n$ and $M_n$ and a norm $\calN$ of $B_n$ such that
$m_n (\calN(\beta)-1) \leq \log \|\beta\|_\ell \leq M_n(\calN(\beta)+1)$ for all $\beta \in B_n$.

Finally, observe that our notions of lamination and of laminated norm follow the ones used in~\cite{dynnikov:hal-00001267} but are
slightly different from the ones defined in previous work (see~\cite{Dehornoy_whyare,Fenn_orderingthe}), which we call \emph{closed laminations}.

A \emph{closed lamination} is the union of $n$ non-intersecting closed curves $\calL_1,\ldots,\calL_n$ such that
each curve $\calL_j$ splits the plane $\CC$ into one \emph{inner} region that contains the left point and $j$ punctures, and one
\emph{outer} region that contains the right point and $n-j$ punctures.
Informally, closed laminations may be easily obtained from (non-closed) laminations as follows:
for each integer $i \leq n-1$, bend both half-line of the curve $\calL_i$ to the left,
in order to transform $\calL_i$ into a closed line;
then, add a circle $\calL_n$ that will enclose all punctures and all lines $\calL_i$ but not the right point.

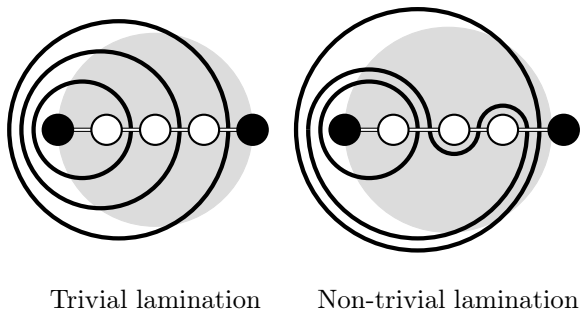
\begin{figure}[!ht]
\begin{center}
\begin{tikzpicture}[scale=0.16]
\draw[fill=palegray,draw=palegray] (16,0) circle (8);

\draw[draw=black,ultra thick] (14,0) arc (0:360:4);
\draw[draw=black,ultra thick] (18,0) arc (0:360:6.5);
\draw[draw=black,ultra thick] (22,0) arc (0:360:9);

\draw[draw=black,ultra thick] (8,0) -- (24,0);

\draw[draw=white,thick] (8,0) -- (24,0);

\PUNCTURE{8}
\PUNCTURE{24}

\puncture{12}
\puncture{16}
\puncture{20}

\node at (16,-14) {Trivial lamination};
\node at (16,10) {};
\end{tikzpicture}
{\tiny~}
\begin{tikzpicture}[scale=0.16]
\draw[fill=palegray,draw=palegray] (16.5,0) circle (8.5);

\draw[draw=black,ultra thick] (14,0) arc (0:360:4);
\draw[draw=black,ultra thick] (15,0) arc (0:180:5);
\draw[draw=black,ultra thick] (23,0) arc (360:180:9);
\draw[draw=black,ultra thick] (24,0) arc (360:0:10);

\draw[draw=black,ultra thick] (19,0) arc (180:0:2);
\draw[draw=black,ultra thick] (15,0) arc (180:360:2);

\draw[draw=black,ultra thick] (8,0) -- (26,0);

\draw[draw=white,thick] (8,0) -- (26,0);

\PUNCTURE{8}
\PUNCTURE{26}

\puncture{12}
\puncture{17}
\puncture{21}

\node at (16.5,-14) {Non-trivial lamination};
\end{tikzpicture}
\end{center}
\caption{Closed laminations}
\label{fig:old-laminations}
\end{figure}

Figure~\ref{fig:old-laminations} presents the closed laminations
corresponding to the laminations displayed in Figure~\ref{fig:laminations}.
From now on, we exclusively use laminations.

\subsection{Curve Diagrams}
\label{subsection:curve-diagrams}

As mentioned in Section~\ref{subsection:braids}, curve diagrams are an alternative to laminations.

\begin{dfn}{Curve diagram}
Let us consider the set $P_n$ of $n$ punctures inside the disk $D^2$.
We call \emph{curve diagram}, and denote by $\calD$, each non-intersecting open curve,
with endpoints $-1$ and $+1$,
that contains each puncture of the disk.
\label{dfn:trivial-diagram}
\end{dfn}

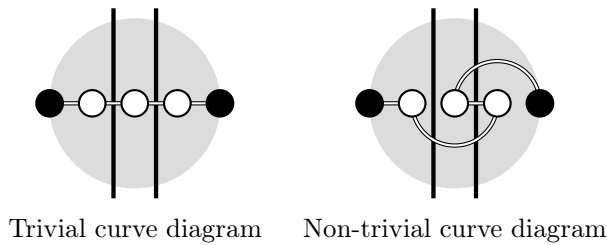
\begin{figure}[!ht]
\begin{center}
\begin{tikzpicture}[scale=0.14]
\draw[fill=palegray,draw=palegray] (16,0) circle (8);

\draw[draw=black,ultra thick] (14,-9) -- (14,9);
\draw[draw=black,ultra thick] (18,-9) -- (18,9);

\draw[draw=black,ultra thick] (8,0) -- (24,0);

\draw[draw=white,thick] (8,0) -- (24,0);

\PUNCTURE{8}
\PUNCTURE{24}

\puncture{12}
\puncture{16}
\puncture{20}

\node at (16,-12) {Trivial curve diagram};
\end{tikzpicture}
{\tiny~}
\begin{tikzpicture}[scale=0.14]
\draw[fill=palegray,draw=palegray] (16,0) circle (8);

\draw[draw=black,ultra thick] (14,-9) -- (14,9);
\draw[draw=black,ultra thick] (18,-9) -- (18,9);

\draw[draw=black,ultra thick] (8,0) -- (12,0);
\draw[draw=black,ultra thick] (12,0) arc (180:360:4);
\draw[draw=black,ultra thick] (20,0) -- (16,0);
\draw[draw=black,ultra thick] (16,0) arc (180:0:4);

\draw[draw=white,thick] (8,0) -- (12,0);
\draw[draw=white,thick] (12,0) arc (180:360:4);
\draw[draw=white,thick] (20,0) -- (16,0);
\draw[draw=white,thick] (16,0) arc (180:0:4);

\PUNCTURE{8}
\PUNCTURE{24}

\puncture{12}
\puncture{16}
\puncture{20}

\node at (16,-12) {Non-trivial curve diagram};
\end{tikzpicture}
\end{center}
\caption{Curve diagrams}
\label{fig:diagrams}
\end{figure}

Since both laminations and curve diagrams consist in
drawings on the complex plane, we represent them in analogous ways
(see Figure~\ref{fig:diagrams}).

We adapt the notion of laminated norm to curve diagrams,
and thereby define the norm of a curve diagram,
and the diagrammatic norm of a braid.

\begin{dfn}{Diagrammatic norm and tight curve diagram}
Let $\beta \in B_n$ be a braid on $n$ strands,
and let $\calD$ be a curve diagram representing $\beta$.
Recall that $\bL$ is the trivial lamination.

The \emph{diagrammatic norm} of $\calD$, which we denote by $\|\calD\|_d$, is
the cardinality of the set $\calD \cap \bL$,
i.e. the number of intersection points between the curve diagram $\calD$
and the $n-1$ vertical lines of the lamination $\bL$.

Moreover, if, among all the curve diagrams that represent $\beta$,
the curve diagram $\calD$ has a minimal diagrammatic norm, then we say that
$\calD$ is a \emph{tight} curve diagram.
In this case, we also define the
\emph{diagrammatic norm} of the braid $\beta$, which we denote by $\|\beta\|_d$, as
the norm $\|\calD\|_d$.
\label{dfn:diagram-norm}
\end{dfn}

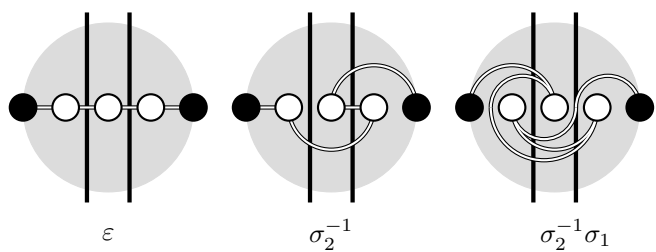
\begin{figure}[!ht]
\begin{center}
\begin{tikzpicture}[scale=0.14]
\draw[fill=palegray,draw=palegray] (16,0) circle (8);

\draw[draw=black,ultra thick] (14,-9) -- (14,9);
\draw[draw=black,ultra thick] (18,-9) -- (18,9);

\draw[draw=black,ultra thick] (8,0) -- (24,0);

\draw[draw=white,thick] (8,0) -- (24,0);

\PUNCTURE{8}
\PUNCTURE{24}

\puncture{12}
\puncture{16}
\puncture{20}

\node at (16,-12) {$\varepsilon$};
\node at (16,-14) {};
\end{tikzpicture}
{\tiny~}
\begin{tikzpicture}[scale=0.14]
\draw[fill=palegray,draw=palegray] (16,0) circle (8);

\draw[draw=black,ultra thick] (14,-9) -- (14,9);
\draw[draw=black,ultra thick] (18,-9) -- (18,9);

\draw[draw=black,ultra thick] (8,0) -- (12,0);
\draw[draw=black,ultra thick] (12,0) arc (180:360:4);
\draw[draw=black,ultra thick] (20,0) -- (16,0);
\draw[draw=black,ultra thick] (16,0) arc (180:0:4);

\draw[draw=white,thick] (8,0) -- (12,0);
\draw[draw=white,thick] (12,0) arc (180:360:4);
\draw[draw=white,thick] (20,0) -- (16,0);
\draw[draw=white,thick] (16,0) arc (180:0:4);

\PUNCTURE{8}
\PUNCTURE{24}

\puncture{12}
\puncture{16}
\puncture{20}

\node at (16,-12) {$\sigma_2^{-1}$};
\node at (16,-14) {};
\end{tikzpicture}
{\tiny~}
\begin{tikzpicture}[scale=0.14]
\draw[fill=palegray,draw=palegray] (14,0) circle (8);

\draw[draw=black,ultra thick] (12,-9) -- (12,9);
\draw[draw=black,ultra thick] (16,-9) -- (16,9);

\draw[draw=black,ultra thick] (6,0) arc (180:0:4);
\draw[draw=black,ultra thick] (14,0) arc (0:180:3);
\draw[draw=black,ultra thick] (8,0) arc (180:360:5);
\draw[draw=black,ultra thick] (18,0) arc (360:180:4);
\draw[draw=black,ultra thick] (10,0) arc (180:360:3);
\draw[draw=black,ultra thick] (16,0) arc (180:0:3);

\draw[draw=white,thick] (6,0) arc (180:0:4);
\draw[draw=white,thick] (14,0) arc (0:180:3);
\draw[draw=white,thick] (8,0) arc (180:360:5);
\draw[draw=white,thick] (18,0) arc (360:180:4);
\draw[draw=white,thick] (10,0) arc (180:360:3);
\draw[draw=white,thick] (16,0) arc (180:0:3);

\PUNCTURE{6}{0}
\PUNCTURE{22}{4}

\puncture{10}
\puncture{14}
\puncture{18}

\node at (16,-12) {$\sigma_2^{-1} \sigma_1$};
\node at (16,-14) {};
\end{tikzpicture}
\end{center}
\caption{Identifying braids to tight curve diagrams}
\label{fig:curves}
\end{figure}

\section{Norm-Preserving Transformations}
\label{section:from-laminations-to-graphs}

Counting intersections between the curves of a lamination $\beta(\bL)$ and
the trivial curve diagram $\bD$ was the basic idea that led to the
\emph{norm} introduced by Dynnikov and Wiest in~\cite{dynnikov:hal-00001267}.

A natural question the comparison
between the norm defined on laminations
and the norm defined on curve diagrams (Definitions~\ref{dfn:lamination-norm} and~\ref{dfn:diagram-norm}).
We demonstrate here a simple connection between laminated and diagrammatic norms.

\begin{pro}
Let $\beta$ be a braid on $n$ strands.
We have: $\|\beta\|_\ell = \|\beta^{-1}\|_d$, i.e. the laminated norm of the braid $\beta$ is equal to the
diagrammatic norm of the braid $\beta^{-1}$.
\label{pro:laminated-norm-diagrammatic}
\end{pro}

\begin{proof}
Let $\bL$ be the trivial lamination and let $h \in H_n^\ast$ be a representative of the braid $\beta$
such that $h(\bL)$ is a tight lamination.
Since the curve $h^{-1}(\bD)$ is a curve diagram of the braid $\beta^{-1}$,
it follows that \[\|\beta\|_\ell = |h(\bL) \cap \bD| = |h^{-1} (h(\bL) \cap \bD) | = |\bL \cap h^{-1}(\bD) | \geq \|\beta^{-1}\|_d.\]
One proves similarly that $\|\beta^{-1}\|_d \geq \|\beta\|_\ell$, which completes the proof.
\end{proof}

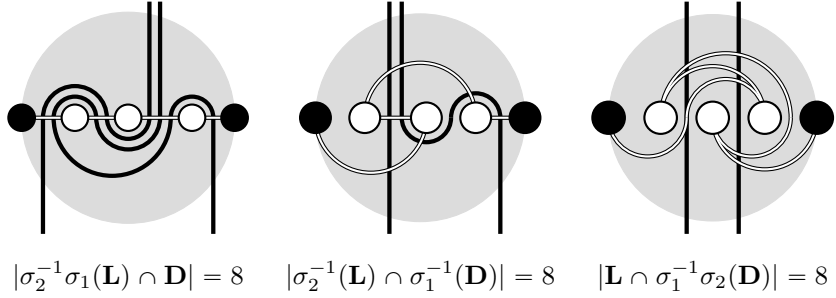
\begin{figure}[!ht]
\begin{center}
\begin{tikzpicture}[scale=0.14]
\draw[fill=palegray,draw=palegray] (20,0) circle (10);

\draw[draw=black,ultra thick] (12,0) -- (12,-11);
\draw[draw=black,ultra thick] (22,0) -- (22,11);
\draw[draw=black,ultra thick] (23,0) -- (23,11);
\draw[draw=black,ultra thick] (28,0) -- (28,-11);

\draw[draw=black,ultra thick] (12,0) arc (180:0:3);
\draw[draw=black,ultra thick] (13,0) arc (180:0:2);
\draw[draw=black,ultra thick] (24,0) arc (180:0:2);
\draw[draw=black,ultra thick] (13,0) arc (180:360:5.5);
\draw[draw=black,ultra thick] (17,0) arc (180:360:3);
\draw[draw=black,ultra thick] (18,0) arc (180:360:2);

\draw[draw=black,ultra thick] (10,0) -- (30,0);

\draw[draw=white,thick] (10,0) -- (30,0);

\PUNCTURE{10}
\PUNCTURE{30}

\puncture{15}
\puncture{20}
\puncture{26}

\node at (20,-15) {$|\sigma_2^{-1} \sigma_1(\bL) \cap \bD| = 8$};
\end{tikzpicture}
{\tiny~}
\begin{tikzpicture}[scale=0.1621]
\draw[fill=palegray,draw=palegray] (6.5,0) circle (8.5);

\draw[draw=black,ultra thick] (4,-9.5) -- (4,9.5);
\draw[draw=black,ultra thick] (5,0) -- (5,9.5);
\draw[draw=black,ultra thick] (5,0) arc (180:360:2);
\draw[draw=black,ultra thick] (9,0) arc (180:0:2);
\draw[draw=black,ultra thick] (13,0) -- (13,-9.5);

\draw[draw=black,ultra thick] (-2,0) arc (180:360:4.5);
\draw[draw=black,ultra thick] (2,0) -- (7,0);
\draw[draw=black,ultra thick] (2,0) arc (180:0:4.5);
\draw[draw=black,ultra thick] (11,0) -- (15,0);

\draw[draw=white,thick] (-2,0) arc (180:360:4.5);
\draw[draw=white,thick] (2,0) -- (7,0);
\draw[draw=white,thick] (2,0) arc (180:0:4.5);
\draw[draw=white,thick] (11,0) -- (15,0);

\PUNCTURE{-2}
\PUNCTURE{15}

\puncture{2}
\puncture{7}
\puncture{11}

\node at (6.5,-12.955) {$|\sigma_2^{-1}(\bL) \cap \sigma_1^{-1}(\bD)| = 8$};
\end{tikzpicture}
{\tiny~}
\begin{tikzpicture}[scale=0.1711]
\draw[fill=palegray,draw=palegray] (6,0) circle (8);

\draw[draw=black,ultra thick] (4,-9) -- (4,9);
\draw[draw=black,ultra thick] (8,-9) -- (8,9);

\draw[draw=black,ultra thick] (-2,0) arc (180:360:3);
\draw[draw=black,ultra thick] (4,0) arc (180:0:3);
\draw[draw=black,ultra thick] (10,0) arc (0:180:4);
\draw[draw=black,ultra thick] (2,0) arc (180:0:5);
\draw[draw=black,ultra thick] (12,0) arc (360:180:3);
\draw[draw=black,ultra thick] (6,0) arc (180:360:4);

\draw[draw=white,thick] (-2,0) arc (180:360:3);
\draw[draw=white,thick] (4,0) arc (180:0:3);
\draw[draw=white,thick] (10,0) arc (0:180:4);
\draw[draw=white,thick] (2,0) arc (180:0:5);
\draw[draw=white,thick] (12,0) arc (360:180:3);
\draw[draw=white,thick] (6,0) arc (180:360:4);

\PUNCTURE{-2}
\PUNCTURE{14}

\puncture{2}
\puncture{6}
\puncture{10}

\node at (6,-12.2735) {$|\bL \cap \sigma_1^{-1}\sigma_2(\bD)| = 8$};
\end{tikzpicture}
\end{center}
\caption{From $\|\sigma_2^{-1} \sigma_1\|_\ell$ to $\|\sigma_1^{-1} \sigma_2\|_d$}
\label{fig:one-norm-to-the-other}
\end{figure}

From Proposition~\ref{pro:laminated-norm-diagrammatic} follow directly Corollaries~\ref{cor:count-twice-size} and~\ref{cor:equal-series}.

\begin{cor}
Let $n$ and $k$ be positive integers.
We have: $|\{\beta \in B_n : \|\beta\|_\ell = k\}| = |\{\beta \in B_n : \|\beta\|_d = k\}|$, i.e.
the braids on $n$ strands with \emph{laminated norm} $k$ are as numerous as those with \emph{diagrammatic norm} $k$.
\label{cor:count-twice-size}
\end{cor}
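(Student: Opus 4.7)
The plan is to exploit Proposition~\ref{pro:laminated-norm-diagrammatic} via the obvious group-theoretic involution. Consider the map $\iota : B_n \to B_n$ defined by $\iota(\beta) = \beta^{-1}$. Since $(\beta^{-1})^{-1} = \beta$, the map $\iota$ is an involution, and in particular a bijection of $B_n$ onto itself.

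Next I would restrict $\iota$ to the subset $S_k^\ell = \{\beta \in B_n : \|\beta\|_\ell = k\}$. For any $\beta \in S_k^\ell$, Proposition~\ref{pro:laminated-norm-diagrammatic} gives $\|\iota(\beta)\|_d = \|\beta^{-1}\|_d = \|\beta\|_\ell = k$, so $\iota$ sends $S_k^\ell$ into $S_k^d = \{\beta \in B_n : \|\beta\|_d = k\}$. Conversely, for any $\gamma \in S_k^d$, applying Proposition~\ref{pro:laminated-norm-diagrammatic} to $\gamma^{-1}$ yields $\|\gamma^{-1}\|_\ell = \|(\gamma^{-1})^{-1}\|_d = \|\gamma\|_d = k$, so $\gamma^{-1} \in S_k^\ell$ and $\iota(\gamma^{-1}) = \gamma$. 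Hence $\iota$ restricts to a bijection $S_k^\ell \to S_k^d$, and the two sets have the same cardinality.

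There is essentially no obstacle here: the result is a one-line consequence of the previous proposition together with the fact that inversion is a bijection of the group. The only thing to verify is that Proposition~\ref{pro:laminated-norm-diagrammatic} applies symmetrically in both directions, which it does because $(\beta^{-1})^{-1} = \beta$ gives $\|\beta\|_d = \|\beta^{-1}\|_\ell$ as well as $\|\beta\|_\ell = \|\beta^{-1}\|_d$.
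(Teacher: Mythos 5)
Your argument is correct and is exactly the intended one: the paper presents this corollary as following "directly" from Proposition~\ref{pro:laminated-norm-diagrammatic}, and the implicit proof is precisely the observation that inversion is a bijection of $B_n$ exchanging the two sets. Nothing to add.
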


\begin{cor}
For all positive integers $n$, the geometric generating functions
$\sum_{\beta \in B_n} z^{\|\beta\|_d}$ and
$\sum_{\beta \in B_n} z^{\|\beta\|_\ell}$ are equal.
\label{cor:equal-series}
\end{cor}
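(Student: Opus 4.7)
The plan is to derive both corollaries as essentially immediate consequences of Proposition~\ref{pro:laminated-norm-diagrammatic}, by exploiting the fact that inversion is a bijection of $B_n$ to itself. The substantive content has already been done in establishing that $\|\beta\|_\ell = \|\beta^{-1}\|_d$; what remains is purely combinatorial bookkeeping.

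For Corollary~\ref{cor:count-twice-size}, I would argue as follows. Fix positive integers $n$ and $k$, and consider the map $\iota : B_n \to B_n$ defined by $\iota(\beta) = \beta^{-1}$. This map is a bijection (it is its own inverse). By Proposition~\ref{pro:laminated-norm-diagrammatic}, $\iota$ sends the set $\{\beta \in B_n : \|\beta\|_\ell = k\}$ into the set $\{\gamma \in B_n : \|\gamma\|_d = k\}$: if $\|\beta\|_\ell = k$, then $\|\iota(\beta)\|_d = \|\beta^{-1}\|_d = \|\beta\|_\ell = k$. Applying the same argument with $\ell$ and $d$ swapped (again via Proposition~\ref{pro:laminated-norm-diagrammatic} applied to $\beta^{-1}$), the restriction of $\iota$ to $\{\beta : \|\beta\|_\ell = k\}$ lands in $\{\gamma : \|\gamma\|_d = k\}$ and is a bijection between these two sets. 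Equality of cardinalities follows.

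For Corollary~\ref{cor:equal-series}, I would simply rearrange the sums by grouping braids according to their norm:
\[\sum_{\beta \in B_n} z^{\|\beta\|_\ell} = \sum_{k \geq 0} |\{\beta \in B_n : \|\beta\|_\ell = k\}|\, z^k,\]
and likewise for $\|\cdot\|_d$. Corollary~\ref{cor:count-twice-size} then identifies the two series term by term. Alternatively, one can bypass Corollary~\ref{cor:count-twice-size} altogether and write directly
\[\sum_{\beta \in B_n} z^{\|\beta\|_\ell} = \sum_{\beta \in B_n} z^{\|\beta^{-1}\|_d} = \sum_{\gamma \in B_n} z^{\|\gamma\|_d},\]
where the first equality is Proposition~\ref{pro:laminated-norm-diagrammatic} applied to every term and the second is the change of variables $\gamma = \beta^{-1}$, which is legitimate because inversion is a bijection of $B_n$.

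There is no real obstacle here: the only subtlety worth flagging is that one must be careful to invoke inversion as a \emph{bijection} (so no element is missed or double-counted), but this is immediate. All the geometric work was absorbed into Proposition~\ref{pro:laminated-norm-diagrammatic}, so the corollaries are essentially one-line verifications.
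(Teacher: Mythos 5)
Your argument is correct and is exactly the intended one: the paper states that both corollaries ``follow directly'' from Proposition~\ref{pro:laminated-norm-diagrammatic} without spelling out the details, and your proof supplies precisely those details — using the bijection $\beta \mapsto \beta^{-1}$ of $B_n$ together with $\|\beta\|_\ell = \|\beta^{-1}\|_d$ to match cardinalities (and hence the two series term by term), or equivalently to change variables directly in the sum.
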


Geometrical symmetries induce some additional invariance properties of the laminated and diagrammatic norms.
Indeed, consider the group morphisms $\bS_v$, $\bS_h$ and $\bS_c$ such that
$\bS_v : \sigma_i \mapsto \sigma_{n-i}^{-1}$, $\bS_h : \sigma_i \mapsto \sigma_i^{-1}$ and
$\bS_c : \sigma_i \mapsto \sigma_{n-i}$.
Observe that $\bS_h \circ \bS_v = \bS_v \circ \bS_h = \bS_c$
is the morphism $\beta \mapsto \Delta^{-1} \cdot \beta \cdot \Delta$.

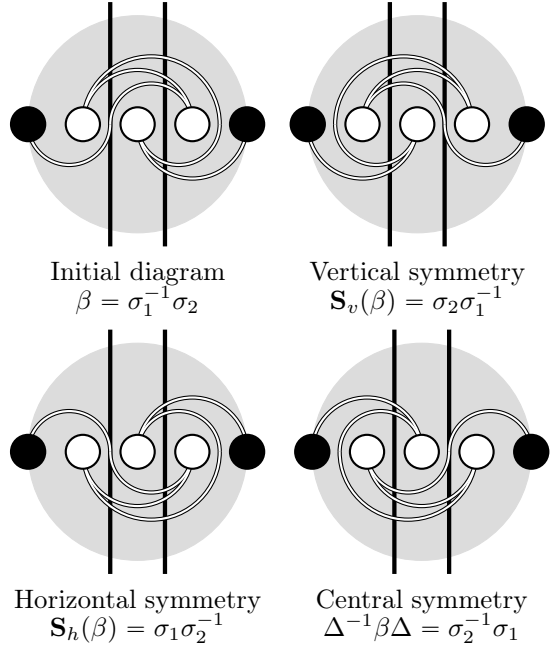
\begin{figure}[!ht]
\begin{center}
\begin{tikzpicture}[scale=0.18]
\draw[fill=palegray,draw=palegray] (6,0) circle (8);

\draw[draw=black,ultra thick] (4,-9) -- (4,9);
\draw[draw=black,ultra thick] (8,-9) -- (8,9);

\draw[draw=black,ultra thick] (-2,0) arc (180:360:3);
\draw[draw=black,ultra thick] (4,0) arc (180:0:3);
\draw[draw=black,ultra thick] (10,0) arc (0:180:4);
\draw[draw=black,ultra thick] (2,0) arc (180:0:5);
\draw[draw=black,ultra thick] (12,0) arc (360:180:3);
\draw[draw=black,ultra thick] (6,0) arc (180:360:4);

\draw[draw=white,thick] (-2,0) arc (180:360:3);
\draw[draw=white,thick] (4,0) arc (180:0:3);
\draw[draw=white,thick] (10,0) arc (0:180:4);
\draw[draw=white,thick] (2,0) arc (180:0:5);
\draw[draw=white,thick] (12,0) arc (360:180:3);
\draw[draw=white,thick] (6,0) arc (180:360:4);

\PUNCTURE{-2}
\PUNCTURE{14}

\puncture{2}
\puncture{6}
\puncture{10}

\node at (6,-11) {Initial diagram};
\node at (6,-13) {$\beta = \sigma_1^{-1} \sigma_2$};
\end{tikzpicture}
{\tiny~}
\begin{tikzpicture}[scale=0.18]
\draw[fill=palegray,draw=palegray] (-6,0) circle (8);

\draw[draw=black,ultra thick] (-4,-9) -- (-4,9);
\draw[draw=black,ultra thick] (-8,-9) -- (-8,9);

\draw[draw=black,ultra thick] (2,0) arc (360:180:3);
\draw[draw=black,ultra thick] (-4,0) arc (0:180:3);
\draw[draw=black,ultra thick] (-10,0) arc (180:0:4);
\draw[draw=black,ultra thick] (-2,0) arc (0:180:5);
\draw[draw=black,ultra thick] (-12,0) arc (180:360:3);
\draw[draw=black,ultra thick] (-6,0) arc (360:180:4);

\draw[draw=white,thick] (2,0) arc (360:180:3);
\draw[draw=white,thick] (-4,0) arc (0:180:3);
\draw[draw=white,thick] (-10,0) arc (180:0:4);
\draw[draw=white,thick] (-2,0) arc (0:180:5);
\draw[draw=white,thick] (-12,0) arc (180:360:3);
\draw[draw=white,thick] (-6,0) arc (360:180:4);

\PUNCTURE{2}
\PUNCTURE{-14}

\puncture{-2}
\puncture{-6}
\puncture{-10}

\node at (-6,-11) {Vertical symmetry};
\node at (-6,-13) {$\bS_v(\beta) = \sigma_2 \sigma_1^{-1}$};
\end{tikzpicture}

\begin{tikzpicture}[scale=0.18]
\draw[fill=palegray,draw=palegray] (6,0) circle (8);

\draw[draw=black,ultra thick] (4,-9) -- (4,9);
\draw[draw=black,ultra thick] (8,-9) -- (8,9);

\draw[draw=black,ultra thick] (-2,0) arc (180:0:3);
\draw[draw=black,ultra thick] (4,0) arc (180:360:3);
\draw[draw=black,ultra thick] (10,0) arc (360:180:4);
\draw[draw=black,ultra thick] (2,0) arc (180:360:5);
\draw[draw=black,ultra thick] (12,0) arc (0:180:3);
\draw[draw=black,ultra thick] (6,0) arc (180:0:4);

\draw[draw=white,thick] (-2,0) arc (180:0:3);
\draw[draw=white,thick] (4,0) arc (180:360:3);
\draw[draw=white,thick] (10,0) arc (360:180:4);
\draw[draw=white,thick] (2,0) arc (180:360:5);
\draw[draw=white,thick] (12,0) arc (0:180:3);
\draw[draw=white,thick] (6,0) arc (180:0:4);

\PUNCTURE{-2}
\PUNCTURE{14}

\puncture{2}
\puncture{6}
\puncture{10}

\node at (6,-11) {Horizontal symmetry};
\node at (6,-13) {$\bS_h(\beta) = \sigma_1 \sigma_2^{-1}$};
\end{tikzpicture}
{\tiny~}
\begin{tikzpicture}[scale=0.18]
\draw[fill=palegray,draw=palegray] (-6,0) circle (8);

\draw[draw=black,ultra thick] (-4,-9) -- (-4,9);
\draw[draw=black,ultra thick] (-8,-9) -- (-8,9);

\draw[draw=black,ultra thick] (2,0) arc (0:180:3);
\draw[draw=black,ultra thick] (-4,0) arc (360:180:3);
\draw[draw=black,ultra thick] (-10,0) arc (180:360:4);
\draw[draw=black,ultra thick] (-2,0) arc (360:180:5);
\draw[draw=black,ultra thick] (-12,0) arc (180:0:3);
\draw[draw=black,ultra thick] (-6,0) arc (0:180:4);

\draw[draw=white,thick] (2,0) arc (0:180:3);
\draw[draw=white,thick] (-4,0) arc (360:180:3);
\draw[draw=white,thick] (-10,0) arc (180:360:4);
\draw[draw=white,thick] (-2,0) arc (360:180:5);
\draw[draw=white,thick] (-12,0) arc (180:0:3);
\draw[draw=white,thick] (-6,0) arc (0:180:4);

\PUNCTURE{2}
\PUNCTURE{-14}

\puncture{-2}
\puncture{-6}
\puncture{-10}

\node at (-6,-11) {Central symmetry};
\node at (-6,-13) {$\Delta^{-1} \beta \Delta = \sigma_2^{-1} \sigma_1$};
\end{tikzpicture}
\end{center}
\caption{Vertical, horizontal and central symmetries}
\label{fig:symmetries}
\end{figure}

\begin{lem}
For all braids $\beta \in B_n$, we have:
$\|\beta\|_\ell = \|\bS_v(\beta)\|_\ell = \|\bS_h(\beta)\|_\ell = \|\bS_c(\beta)\|_\ell$ and 
$\|\beta\|_d = \|\bS_v(\beta)\|_d = \|\bS_h(\beta)\|_d = \|\bS_c(\beta)\|_d$, i.e.
the laminated and diagrammatic norms are invariant under $\bS_v$, $\bS_h$ and $\bS_c$.
\label{lem:invariant-norm}
\end{lem}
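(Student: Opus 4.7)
The plan is to realize each of the three algebraic automorphisms $\bS_v, \bS_h, \bS_c$ geometrically, as conjugation by an involutive isometry of $\CC$ that preserves every reference structure appearing in the two norms.

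First, I would invoke the fact that Theorem~\ref{thm:braids-2} is independent of the choice of $P_n$ to pick a puncture set symmetric about $0$, i.e. satisfying $p_i = -p_{n+1-i}$. Once this is done, the trivial lamination $\bL$ and the trivial curve diagram $\bD$ are simultaneously invariant, as sets, under each of the three involutions
\[s_h(z) = \overline{z}, \qquad s_v(z) = -\overline{z}, \qquad s_c(z) = -z,\]
and each such $s$ also preserves $P_n$, $\partial D^2$ and $\{-1,+1\}$ setwise (either pointwise or by swapping $-1$ with $+1$). Here $s_c$ is orientation-preserving while $s_v$ and $s_h$ reverse orientation, so in all three cases conjugation by $s$ sends orientation-preserving homeomorphisms to orientation-preserving ones and thus defines an involution of $H_n^\ast$, hence of $B_n$.

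Second, I would verify that these conjugations realize the correct algebraic automorphisms. Taking a geometric representative of $\sigma_i$ supported in a small disk around $\{p_i,p_{i+1}\}$, the computation is immediate on generators: $s_h$ fixes each puncture but reverses rotational direction, so $s_h \sigma_i s_h^{-1} = \sigma_i^{-1} = \bS_h(\sigma_i)$; $s_v$ swaps $p_i$ with $p_{n+1-i}$ and reverses rotational direction, giving $s_v \sigma_i s_v^{-1} = \sigma_{n-i}^{-1} = \bS_v(\sigma_i)$; and $s_c = s_h \circ s_v$ yields $s_c \sigma_i s_c^{-1} = \sigma_{n-i} = \bS_c(\sigma_i)$. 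Extending multiplicatively confirms that conjugation by $s$ induces the corresponding morphism $\bS$ on all of $B_n$.

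Third, I would deduce both norm identities in one stroke. If $h \in H_n^\ast$ represents $\beta$, then $g := s \circ h \circ s^{-1}$ represents $\bS(\beta)$, and the setwise invariance $s(\bL) = \bL$, $s(\bD) = \bD$ together with $s^{-1}(\bL) = \bL$ give
\[g(\bL) \cap \bD = s(h(\bL)) \cap s(\bD) = s\bigl(h(\bL) \cap \bD\bigr),\]
so $|g(\bL) \cap \bD| = |h(\bL) \cap \bD|$, and the analogous identity holds for $|g(\bD) \cap \bL|$. Choosing $h$ so that $h(\bL)$, respectively $h(\bD)$, is tight yields $\|\bS(\beta)\|_\ell \leq \|\beta\|_\ell$ and $\|\bS(\beta)\|_d \leq \|\beta\|_d$; applying the same argument to $\bS(\beta)$, and using that each of $\bS_v, \bS_h, \bS_c$ is an involution on $B_n$ (an immediate check on generators), provides the reverse inequalities.

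The main obstacle is essentially bookkeeping: the only computation that requires care is showing on a single generator that conjugation by each orientation-reversing $s$ truly flips the exponent of the half-twist $\sigma_i$, which comes down to the elementary observation that an orientation-reversing isometry turns a counterclockwise Dehn half-twist into a clockwise one. Everything else follows formally from the fact that $\bL$ and $\bD$ are $s$-invariant and that the norms only see the cardinality of an $s$-equivariant intersection.
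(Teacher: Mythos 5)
Your proof is correct and follows the same approach as the paper's: realize $\bS_v$, $\bS_h$, $\bS_c$ as conjugation by the three planar involutions $-\overline{z}$, $\overline{z}$, $-z$, and observe that each preserves $\bL$ and $\bD$ setwise so that intersection counts are unchanged. Your write-up is considerably more explicit than the paper's terse one-paragraph argument (which merely asserts that the symmetric drawings represent the symmetric braids and leaves the norm equality implicit), spelling out the symmetric choice of $P_n$, the orientation bookkeeping that makes conjugation well-defined on $B_n$, the verification on generators, and the use of involutivity to upgrade the resulting inequalities to equalities.
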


\begin{proof}
From a geometric point of view, the braid morphisms $\bS_v$, $\bS_h$ and $\bS_c$
respectively induce vertical, horizontal and central symmetries
on the laminations and the curve diagrams.
More precisely, if $\calL$ and $\calD$ are a lamination and a curve diagram representing some braid $\beta \in B_n$,
then:
\begin{itemize}
\setlength{\itemsep}{0mm}
\item their vertically symmetric lamination $\calL_v$ and curve diagram $\calD_v$ represent the braid $\bS_v(\beta)$;
\item their horizontally symmetric lamination $\calL_h$ and curve diagram $\calD_h$ represent the braid $\bS_h(\beta)$;
\item their centrally symmetric lamination $\calL_c$ and curve diagram $\calD_c$ represent the braid $\bS_c(\beta)$.
\end{itemize}
\end{proof}

\section{Counting Braids With a Given Norm}
\label{section:counting-braids-with-a-given-norm}

We aim now at counting directly tight curve diagrams with a given (diagrammatic) norm.
Henceforth, we will denote by $N_{n,k}$ the number of braids on $n$ strands and (laminated or diagrammatic) norm $k$, i.e.
\[N_{n,k} := |\{\beta \in B_n : \|\beta\|_\ell = k\}| = |\{\beta \in B_n : \|\beta\|_d = k\}|,\]
and we will focus on computing the generating function $\calB_n(x) := \sum_{k \geq 0} N_{n,k} x^k$.
In order to achieve this goal,
we provide here intrinsic characterizations of tight curve diagrams,
then introduce discrete combinatorial structures
that will be in bijection with tight curve diagrams of a given norm.

\subsection{A Characterization of Tight Curve Diagrams}
\label{subsection:a-characterization-of-tight-curve-diagrams}

In the literature, most characterizations of tightness hold for laminations.
Here, we express them in terms of curve diagrams.
However, we first introduce a generalisation of curve diagrams.

\begin{dfn}{Generalised curve diagram}
Let us consider the set $P_n$ of $n$ punctures inside the disk $D^2$, and let $k \geq 1$ be some positive integer.

We call \emph{$k$-generalised curve diagram}, and denote by $\calD$,
a family of non-intersecting curves that consists of
\begin{itemize}
\item one open curve with endpoints $-1$ and $+1$;
\item $k-1$ closed curves, non of which encircling any of the points $\pm 1$
\end{itemize}
and such that each puncture of the disk belongs to one of these $k$ curves.
\label{dfn:generalised-trivial-diagram}
\end{dfn}

Note that ``normal'' curve diagrams are exactly $1$-generalised curve diagrams.

\begin{dfn}{Arcs and neighbour endpoints}
Let $\calL$ be a lamination and let $\calD$ be a generalised curve diagram such that $\calL \cap \calD$ is a finite set,
and such that $\calL$ and $\calD$ actually cross each other at each point of $\calL \cap \calD$:
we say that the sets $\calD$ and $\calL$ are \emph{compatible} with each other.

We call \emph{arc} of $\calL$ with respect to $\calD$ (or $(\calL,\calD)$-arc)
a connected component of $\calL \setminus \calD$.
The endpoints of a $(\calL,\calD)$-arc necessarily lie on $\calD$;
moreover, if the arc is bounded, i.e. if it has two endpoints, then
we call these endpoints \emph{neighbour endpoints} in $\calL$ with respect to $\calD$
(or $(\calL,\calD)$-neighbour endpoints).

Similarly, we call \emph{arc} of $\calD$ with respect to $\calL$ (or $(\calD,\calL)$-arc)
a connected component of $\calD \setminus \calL$.
Such an arc has two endpoints, which we call \emph{neighbour endpoints} in $\calD$ with respect to $\calL$
(or $(\calD,\calL)$-neighbour endpoints).
\label{dfn:dual-arcs-and-crosspoints}
\end{dfn}

Observe that, if $\calD$ is a ($1$-generalised) curve diagram,
the endpoints of the curve $\calD$ itself are $-1$ and $+1$,
and therefore two distinct $(\calD,\calL)$-arcs can share at most one endpoint.
Hence, if $P$ and $Q$ are $(\calD,\calL)$-neighbour endpoints, there exists one unique $(\calD,\calL)$-arc with endpoints $P$ and $Q$,
and we denote this arc by $[P,Q]_\calD$.

In addition, from this notion of arcs and neighbour endpoints follows a
standard intrinsic characterization of tight laminations(see~\cite{Dehornoy_whyare,Fenn_orderingthe}).

\begin{pro}
Let $\calL$ be a lamination and let $\bD = [-1,1]$ be the trivial curve diagram.
The lamination $\calL$ is tight if and only if,
for every pair $(P,Q)$ of elements of $\calL \cap \bD$, with $P \leq Q$, that are
both $(\calL,\bD)$- and $(\bD,\calL)$-neighbour endpoints,
the real interval $\{x \in \RR : P < x < Q\}$
contains at least one puncture among $p_1,\ldots,p_n$.
\label{pro:tight-if-full-arcs}
\end{pro}

From Proposition~\ref{pro:tight-if-full-arcs}
follows a characterization of (1-generalised) tight curve diagrams.

\begin{pro}
Let $\calD$ be a (1-generalised) curve diagram.
The curve diagram $\calD$ is tight if and only $\calD$ is compatible with the trivial lamination $\bL$ and if,
for every pair $(P,Q)$ of elements of $\bL \cap \calD$ that are
both $(\bL,\calD)$- and $(\calD,\bL)$-neighbour endpoints,
the arc $[P,Q]_\calD$ contains exactly one point among $p_1,\ldots,p_n$.
\label{pro:tight-iff-full-arcs-2}
\end{pro}

\begin{proof}
First, note that, if $\bL$ and $\calD$ have a common point $p$ at which $\calD$ does not \emph{cross} a curve of $\bL$,
then $\calD$ is certainly not tight:
indeed, $p$ cannot be one of the points $p_1,\ldots,p_n$ and therefore one can modify slightly $\calD$
in order to obtain a curve diagram $\calD'$ isotopic to $\calD$ (i.e. representing the same braid) and
such that $\bL \cap \calD' \subseteq (\bL \cap \calD) \setminus \{p\}$.
Henceforth, we assume that $\calD$ is compatible with $\bL$.

Let $h$ be an homeomorphism of $\CC$
such that $h(\bD) = \calD$, $h(P_n) = P_n$,
$h(1) = 1$ and $h(-1) = -1$.
Then, let $\calL$ be the lamination such that $h(\calL) = \bL$.
According to Proposition~\ref{pro:laminated-norm-diagrammatic},
the diagram $\calD$ is tight if and only $\calL$ is tight.
According to Proposition~\ref{pro:tight-if-full-arcs},
this also means that, for every pair $(P,Q)$ of elements of $\calL \cap \bD$ that are
both $(\calL,\bD)$- and $(\bD,\calL)$-neighbour endpoints,
the arc $[P,Q]_\bD$ contains at least one point among $p_1,\ldots,p_n$.
Since $(P,Q)$ are both $(\calL,\bD)$- and $(\bD,\calL)$-neighbour endpoints if and only if
$(h(P),h(Q))$ are both $(\bL,\calD)$- and $(\calD,\bL)$-neighbour endpoints,
Proposition~\ref{pro:tight-iff-full-arcs-2} follows.
\end{proof}

Following Proposition~\ref{pro:tight-iff-full-arcs-2},
we introduce the notion of \emph{tight} generalised curve diagram.

\begin{dfn}{Tight generalised curve diagram}
Let $\calD$ be a generalised curve diagram.
We say that $\calD$ is \emph{tight} if $\calD$ is compatible with $\bL$ and if
for every $(\calD,\bL)$-arc $A$ whose endpoints are $(\bL,\calD)$-neighbour endpoints,
the arc $A$ contains one point among $p_1,\ldots,p_n$.
\end{dfn}

Note that, according to Proposition~\ref{pro:tight-iff-full-arcs-2}, this notion of tightness coincides
with the usual notion of tightness on ($1$-generalised) curve diagrams.

\subsection{From Diagrams to Coordinates}
\label{subsection:from-drawings-to-combinatorics}

Proposition~\ref{pro:tight-iff-full-arcs-2} provides an intrinsic characterisation of ($1$-generalised) tight curve diagrams.
However, such a characterization is not directly suitable for counting tight curve diagrams:
to achieve this counting process, we need new tools, which we introduce now.

\begin{dfn}{Endpoints ordering}
Let $\calD$ be a generalised curve diagram compatible with the trivial lamination $\bL$,
and let $\bL_i$ be a curve of $\bL$, with $1 \leq i \leq n-1$.

The punctures $p_1$ and $p_n$ belong to distinct connected components of $\CC \setminus \bL_i$.
Hence, the curve $\bL_i$ intersects the curve diagram $\calD$ at least once.
We orient each curve $\bL_i$ \emph{from bottom to top} and thereby
induce a linear ordering on $\bL_i \cap \calD$:
we denote by $\conc{i}{j}$ the $j$-th smallest element of $\bL_i \cap \calD$.

For the sake of coherence, we also define the sets $\bL_0 := \{-1\}$ and $\bL_n := \{+1\}$.
Then, we denote by $\conc{0}{1}$ the left point $-1$, and by $\conc{n}{1}$ the right point $+1$.
\label{dfn:endpoints-ordering}
\end{dfn}

An immediate induction on $i$ and on $k$ shows that, if $\calD$ is a $k$-generalised curve diagram
compatible with $\bL$, then the cardinality of the set $\bL_i \cap \calD$ is odd.

In addition, if $\calD$ is compatible with $\bL$, any two points $P$ and $Q$ lying on distinct lines
$\bL_{i-1}$ and $\bL_i$ can be linked by at most one $(\calD,\bL)$-arc.
Therefore, we can unambiguously denote this arc by $[P,Q]_\calD$,
which gives rise to the following definition of \emph{coordinates} of a tight generalised curve diagram.

\begin{dfn}{Curve diagram coordinates and braid coordinates}
Let $\calD$ be a tight generalised curve diagram.
The \emph{coordinates} of $\calD$ are defined
as the tuple $\bs\ba := (s_0,a_1,s_1,a_2,\ldots,a_n,s_n)$ such that
\begin{itemize}
\item $s_i = \frac{1}{2}(|\bL_i \cap \calD|-1)$, for all $i \in \{0,\ldots,n\}$;
\item $a_i = \max\{j \geq 0 : \forall k \in \{1,\ldots,j\}, \conc{i-1}{k}$ and $\conc{i}{k}$ are $(\calD,\bL)$-neighbour endpoints$\}$,
for all $i \in \{1,\ldots,n\}$ such that $s_{i-1} \neq s_i$;
\item $a_i$ is the integer such that the puncture $p_i$ lies on the arc $[\conc{i-1}{a_i+1}, \conc{i}{a_i+1}]_\calD$, for all $i \in \{1,\ldots,n\}$
such that $s_{i-1} = s_i$.
\end{itemize}

If, in addition, $\calD$ is a ($1$-generalised) curve diagram, representing some braid $\beta$, then we also say that
$\bs\ba$ are the coordinates of $\beta$.
\label{dfn:curve-diagram-coordinates}
\end{dfn}

\begin{figure}[!ht]
\begin{center}
\begin{tikzpicture}[scale=0.24]
\draw[fill=palegray,draw=palegray] (14,0) circle (8);

\draw[draw=black,ultra thick] (12,-9) -- (12,9);
\draw[draw=black,ultra thick] (16,-9) -- (16,9);

\draw[draw=black,ultra thick] (6,0) arc (180:0:4);
\draw[draw=black,ultra thick] (14,0) arc (0:180:3);
\draw[draw=black,ultra thick] (8,0) arc (180:360:5);
\draw[draw=black,ultra thick] (18,0) arc (360:180:4);
\draw[draw=black,ultra thick] (10,0) arc (180:360:3);
\draw[draw=black,ultra thick] (16,0) arc (180:0:3);

\draw[draw=white,thick] (6,0) arc (180:0:4);
\draw[draw=white,thick] (14,0) arc (0:180:3);
\draw[draw=white,thick] (8,0) arc (180:360:5);
\draw[draw=white,thick] (18,0) arc (360:180:4);
\draw[draw=white,thick] (10,0) arc (180:360:3);
\draw[draw=white,thick] (16,0) arc (180:0:3);

\PUNCTUREE{6}{1}
\PUNCTUREE{22}{1}

\puncturee{10}{1}
\puncturee{14}{1}
\puncturee{18}{1}

\draw[draw=black,densely dotted,thick] (16,0) -- (19,4) -- (24,4);
\draw[draw=black,densely dotted,thick] (16,-3.4641) -- (24,-3.4641);
\draw[draw=black,densely dotted,thick] (16,-4) -- (17.4641,-5.4641) -- (24,-5.4641);
\draw[draw=black,densely dotted,thick] (22,0) -- (24,0);

\draw[draw=black,densely dotted,thick] (12,3.4641) -- (10.63567,4.82843) -- (4,4.82843);
\draw[draw=black,densely dotted,thick] (12,2.82843) -- (4,2.82843);
\draw[draw=black,densely dotted,thick] (12,-2.82843) -- (4,-2.82843);
\draw[draw=black,densely dotted,thick] (12,-3.4641) -- (10.63567,-4.82843) -- (4,-4.82843);
\draw[draw=black,densely dotted,thick] (12,-4.89898) -- (10.07055,-6.82843) -- (4,-6.82843);
\draw[draw=black,densely dotted,thick] (6,0) -- (4,0);

\node[anchor=east] at (4.2,0) {$\conc{0}{1}$};
\node[anchor=east] at (4.2,4.82843) {$\conc{1}{5}$};
\node[anchor=east] at (4.2,2.82843) {$\conc{1}{4}$};
\node[anchor=east] at (4.2,-2.82843) {$\conc{1}{3}$};
\node[anchor=east] at (4.2,-4.82843) {$\conc{1}{2}$};
\node[anchor=east] at (4.2,-6.82843) {$\conc{1}{1}$};
\node[anchor=west] at (23.8,4) {$\conc{2}{3}$};
\node[anchor=west] at (23.8,-3.4641) {$\conc{2}{2}$};
\node[anchor=west] at (23.8,-5.4641) {$\conc{2}{1}$};
\node[anchor=west] at (23.8,0) {$\conc{3}{1}$};

\node at (14,11) {$1$-generalised diagram};
\node at (14,-11) {Coordinates: $(0,0,2,3,1,0,0)$};
\end{tikzpicture}
{\tiny~}
\begin{tikzpicture}[scale=0.24]
\draw[fill=palegray,draw=palegray] (14,0) circle (8);

\draw[draw=black,ultra thick] (12,-9) -- (12,9);
\draw[draw=black,ultra thick] (16,-9) -- (16,9);

\draw[draw=black,ultra thick] (6,0) arc (180:0:6) -- (22,0);
\draw[draw=black,ultra thick] (14,0) arc (0:360:2);

\draw[draw=white,thick] (6,0) arc (180:0:6) -- (22,0);
\draw[draw=white,thick] (14,0) arc (0:360:2);

\PUNCTUREE{6}{1}
\PUNCTUREE{22}{1}

\puncturee{10}{1}
\puncturee{14}{1}
\puncturee{18}{1}

\draw[draw=black,densely dotted,thick] (16,4.47214) -- (24,4.47214);
\draw[draw=black,densely dotted,thick] (22,0) -- (24,0);

\draw[draw=black,densely dotted,thick] (12,6) -- (4,6);
\draw[draw=black,densely dotted,thick] (12,2) -- (4,2);
\draw[draw=black,densely dotted,thick] (12,-2) -- (4,-2);
\draw[draw=black,densely dotted,thick] (6,0) -- (4,0);

\node[anchor=east] at (4.2,0) {$\conc{0}{1}$};
\node[anchor=east] at (4.2,6) {$\conc{1}{3}$};
\node[anchor=east] at (4.2,2) {$\conc{1}{2}$};
\node[anchor=east] at (4.2,-2) {$\conc{1}{1}$};
\node[anchor=west] at (23.8,4.47214) {$\conc{2}{1}$};
\node[anchor=west] at (23.8,0) {$\conc{3}{1}$};

\node at (14,11) {$2$-generalised diagram};
\node at (14,-11) {Coordinates: $(0,0,1,0,0,0,0)$};
\end{tikzpicture}
\end{center}
\caption{Tight generalised curve diagram and associated coordinates}
\label{fig:coordinates-ordering}
\end{figure}
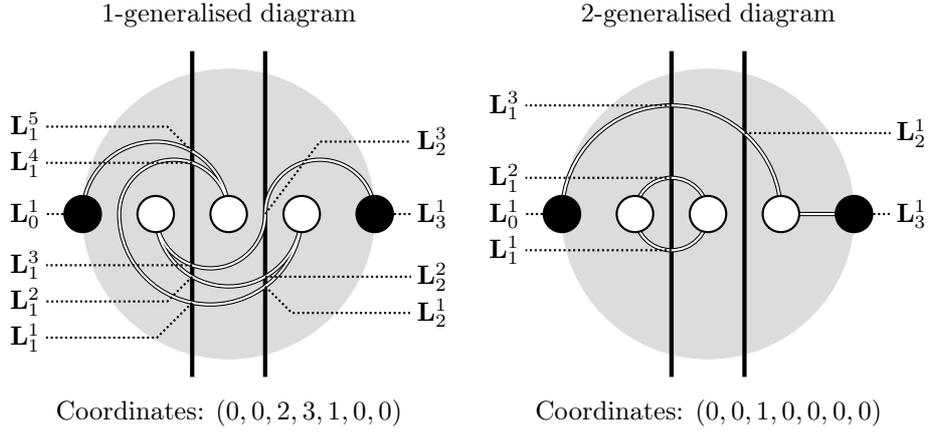

We show below that coordinates indeed characterise tight generalised curve diagrams.

\begin{dfn}{Zones and neighbours}
Let $\calD$ be some tight generalised curve diagram, and let $i \in \{1,\ldots,n\}$ be some integer.
We denote by $\calZ_i$ be the area lying to the left of $\bL_i$ (if $i \leq n-1$) and to the right of $\bL_{i-1}$ (if $i \geq 2$):
we call $\calZ_i$ the \emph{$i$-th zone} of the diagram.
In addition, let $A$ be some $(\calD,\bL)$-arc lying inside the area $\calZ_i$, and let $P$ and $Q$ be the endpoints of the arc $A$.
We say that $P$ and $Q$ are \emph{$i$-th zone neighbours}, which we denote by $P \stackrel{i}{\sim} Q$,
and denote the arc $A$ by $[P,Q]_\calD^i$.
\end{dfn}

Observe that, although there may exist two $(\calD,\bL)$-arcs with endpoints $P$ and $Q$
(when $\calD$ is a tight $k$-generalised curve diagram with $k \geq 2$, e.g. the points
$\conc{1}{2}$ and $\conc{1}{3}$ in the $2$-generalised curve diagram of Fig.~\ref{fig:coordinates-ordering}),
the arc $[P,Q]_\calD^i$ itself is uniquely defined.
Indeed, remember that $\calD$ and $\bL$ are compatible with each other,
which contradicts the fact that $P$ and $Q$ may be linked by two or more arcs lying in the same area $\calZ_i$.

The notion of zones and of $i$-th zone neighbours leads to the following results.

\begin{lem}
Let $\calD$ be a tight generalised curve diagram with coordinates $\bs\ba := (s_0,a_1,s_1,\ldots,s_n)$, and let $i \in \{1,\ldots,n\}$
be some integer.
The $(\calD,\bL)$-arcs contained in the area $\calZ_i$ link repectively:
\begin{itemize}
\item the points $\conc{i-1}{j}$ and $\conc{i}{j}$ such that $j \leq a_i$;
\item the points $\conc{i-1}{j}$ and $\conc{i-1}{k}$ such that $j+k = 2(a_i+s_{i-1}-s_i)+1$ and $\min\{j,k\} > a_i$, if $s_{i-1} > s_i$;
\item the points $\conc{i}{j}$ and $\conc{i}{k}$ such that $j+k = 2(a_i+s_i-s_{i-1})+1$ and $a_i < j < k$, if $s_i > s_{i-1}$;
\item the points $\conc{i-1}{j}$ and $\conc{i}{k}$ such that $j-k = 2(s_{i-1}-s_i)$ and $\min\{j,k\} > a_i$.
\end{itemize}
\label{lem:left-right-box}
\end{lem}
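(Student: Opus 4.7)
The plan is to fix the zone $\calZ_i$ and analyse the arcs of $\calD \setminus \bL$ that lie inside it. Each such arc has two endpoints on $\bL_{i-1} \cup \bL_i$, so we may classify arcs into three families: \emph{straight arcs} (one endpoint on each line), \emph{left $U$-turns} (both endpoints on $\bL_{i-1}$), and \emph{right $U$-turns} (both endpoints on $\bL_i$). Letting $u, v, m$ denote their respective numbers, a simple endpoint count gives
\[2u + m = 2 s_{i-1} + 1 \qquad \text{and} \qquad 2v + m = 2 s_i + 1,\]
so in particular $u - v = s_{i-1} - s_i$.

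Next, I would use tightness to rule out coexistence of $U$-turns on both sides. Since the arcs of $\calD$ are pairwise disjoint in the simply-connected strip $\calZ_i$ (minus the puncture $p_i$), the left $U$-turns are nested, and likewise the right $U$-turns. The \emph{innermost} left $U$-turn has two endpoints which are consecutive on $\bL_{i-1}$, hence simultaneously $(\bL,\calD)$- and $(\calD,\bL)$-neighbour endpoints; Proposition~\ref{pro:tight-iff-full-arcs-2} forces the region it bounds to contain a puncture, which must be $p_i$ since $\calZ_i$ contains no other. The same argument applies to the innermost right $U$-turn. But the region enclosed by a left $U$-turn touches $\bL_{i-1}$ while that of a right $U$-turn touches $\bL_i$, so by a non-crossing/containment argument the two regions cannot both contain $p_i$. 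Therefore $\min(u,v) = 0$, and combined with $u - v = s_{i-1} - s_i$ this yields $u = \max(s_{i-1}-s_i,0)$ and $v = \max(s_i - s_{i-1},0)$.

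I then argue, assuming WLOG $s_{i-1} \geq s_i$, that the $2u$ $U$-turn endpoints on $\bL_{i-1}$ are consecutive: if a point on $\bL_{i-1}$ lay between two successive $U$-turn endpoints, it would have to be the endpoint of an arc contained in the annular region between two nested $U$-turns; but that region does not touch $\bL_i$ and does not contain $p_i$, so no such arc can exist tightly. Hence the $U$-turn endpoints form a block $\conc{i-1}{m_1+1},\ldots,\conc{i-1}{m_1+2u}$ for some $m_1 \geq 0$, the $m_1$ straight arcs below connect $\conc{i-1}{j}$ to $\conc{i}{j}$ for $j \leq m_1$ (by non-crossing), and the $m_2 := 2s_i + 1 - m_1$ straight arcs above connect $\conc{i-1}{m_1 + 2u + k}$ to $\conc{i}{m_1 + k}$ for $k \leq m_2$, so $j - k = 2u = 2(s_{i-1}-s_i)$. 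Nesting of the $u$ $U$-turns further forces the $\ell$-th one (from outside) to have endpoints with indices summing to $2(m_1 + u) + 1 = 2(m_1 + s_{i-1} - s_i) + 1$.

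It remains to identify the integer $m_1$ with $a_i$. When $s_{i-1} \neq s_i$, this is immediate from the first part of Definition~\ref{dfn:curve-diagram-coordinates}: $m_1$ is exactly the number of initial straight arcs linking matching-index points, which are the $(\calD,\bL)$-neighbours of the required form. When $s_{i-1} = s_i$, we have $u = v = 0$, every arc in $\calZ_i$ is straight of the form $\conc{i-1}{j} \to \conc{i}{j}$, and the puncture $p_i$ necessarily lies on one of them, namely the $(a_i+1)$-th by the second part of that definition; cases (1) and (4) of the lemma then both collapse to the full family of straight arcs, as required. The symmetric case $s_i > s_{i-1}$ follows by exchanging the roles of $\bL_{i-1}$ and $\bL_i$. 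I expect the main technical obstacle to be the careful justification that the $U$-turn endpoints form a consecutive block and that no stray arcs can hide in the annular regions between nested $U$-turns; everything else is essentially bookkeeping.
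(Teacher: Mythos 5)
Your proof follows essentially the same path as the paper: classify the arcs in $\calZ_i$ into ``straight'' arcs and $U$-turns (the paper's left- and right-boxes), use Proposition~\ref{pro:tight-iff-full-arcs-2} to force any minimal $U$-turn to enclose $p_i$, deduce that $U$-turns can live on only one side, and match the remaining straight arcs in order by a non-crossing argument; the identification with $a_i$ then falls out of Definition~\ref{dfn:curve-diagram-coordinates}.

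There is one step, however, that is wrong as stated, and it is precisely the step where tightness does the work. You write that the left $U$-turns are nested ``since the arcs of $\calD$ are pairwise disjoint in the simply-connected strip $\calZ_i$''. Disjointness alone does not give nesting: two disjoint arcs with all four endpoints on $\bL_{i-1}$ may lie side by side, bounding two disjoint half-disks, and then ``the innermost left $U$-turn'' is not well-defined. Nesting must be \emph{derived} from tightness: any left $U$-turn encloses a region meeting $\bL_{i-1}$ but not $\bL_i$, so every point of $\bL_{i-1}\cap\calD$ strictly between its two endpoints is itself an endpoint of a smaller left $U$-turn; by induction there is a minimal one bounding an arc that must contain $p_i$, and since that arc is unique, any two left $U$-turns must be comparable under containment. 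This is exactly how the paper shows the left-boxes form a single concentric chain $\{y-k,\ldots,y+k-1\}$, $1\le k\le\ell$, which also delivers the consecutivity of the $U$-turn endpoint block for free. Once you derive nesting this way rather than assert it, the remainder of your bookkeeping --- the parity count $2u+m=2s_{i-1}+1$, the conclusion $\min(u,v)=0$, and the identification of $m_1$ with $a_i$ --- is a mild repackaging of the paper's argument with the sets $\bS_1$ and $\bS_2$.
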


\begin{proof}
We call \emph{left-box} each interval $\{u,\ldots,v\}$ such that the points $\conc{i-1}{u} \stackrel{i}{\sim} \conc{i-1}{v}$, and
\emph{right-box} each interval $\{u,\ldots,v\}$ such that $\conc{i}{u} \stackrel{i}{\sim} \conc{i}{v}$.

If $\{u,\ldots,v\}$ is a left-box (with $u < v$),
then for all $w \in \{u+1,\ldots,v-1\}$ there exists some $x \in \{u+1,\ldots,v-1\}$ such that
$\{w,\ldots,x\}$ (or $\{x,\ldots,w\}$ if $x < w$) is also a left-box. An immediate induction then shows that
the interval $\{u,\ldots,v\}$ necessarily contains some minimal left-box, which must be of the form $\{x,x+1\}$.
Hence Proposition~\ref{pro:tight-iff-full-arcs-2} shows that
the arc $[\conc{i-1}{x},\conc{i-1}{x+1}]_\calD^i$ must be \emph{the} $(\calD,\bL)$-arc containing the puncture $p_i$.

This proves that, among any two left-boxes, one must contain the other, and again an immediate induction proves that
the family of left-boxes must be a set of intervals of the form $\{\{y-k,\ldots,y+k-1\} : 1 \leq k \leq \ell\}$ for some integer $\ell \geq 0$.
Similarly, the family of right-boxes must be a set of intervals of the form
$\{\{z-k,\ldots,z+k-1\} : 1 \leq k \leq m\}$ for some integer $m \geq 0$.
In addition, since $p_i$ belongs to only one $(\calD,\bL)$-arc, we must either have $\ell = 0$ or $m = 0$.

Finally, consider the two sets $\bS_1 = \{1,\ldots,y-\ell-1,y+\ell,\ldots,2s_{i-1}+1\}$ and $\bS_2 = \{1,\ldots,z-m-1,z+m,\ldots,2s_i+1\}$.
Each point $\conc{i-1}{u}$ with $u \in \bS_1$ must be $i$-th zone neighbour with some point $\conc{i}{v}$ with $v \in \bS_2$, and vice-versa.
Since the arcs of $\calD$ cannot cross each other, it comes immediately that the set $\bS_1$ and $\bS_2$ have the same cardinality
and that, if $u$ is the $j$-th smallest element of $\bS_1$ and $v$ is the $j$-th smallest element of $\bS_2$, for some $j \in \{1,\ldots,|\bS_1|\}$,
then $\conc{i-1}{u} \stackrel{i}{\sim} \conc{i}{v}$.
Considering the definition of the coordinates $\bs\ba$, Lemma~\ref{lem:left-right-box} follows.
\end{proof}

\begin{pro}
Let $\calD$ and $\calD'$ be two tight generalised curve diagrams with respective coordinates $\bs\ba$ and $\bs\ba'$.
If $\bs\ba = \bs\ba'$, then there exists some isopoty of $\CC$, preserving $\bL$ setwise and $\{p_i : 1 \leq n\}$ pointwise,
and that maps $\calD$ to $\calD'$.
\label{pro:same-coordinates-implies-isotopic}
\end{pro}

\begin{proof}
Let $(s_0,a_1,\ldots,s_n)$ be the common coordinates of $\calD$ and $\calD'$.
Since $|\calD \cap \bL_i| = 2s_i +1 = |\calD' \cap \bL_i|$ for all $i \in \{0,\ldots,n\}$,
we assume without loss of generality that $\calD \cap \bL = \calD' \cap \bL$.
Lemma~\ref{lem:left-right-box} proves then that some isotopy of $\CC$ preserving pointwise the lines $\bL_i$ (for $1 \leq i \leq n-1$)
maps the diagram $\calD$ to the diagram $\calD'$.

In addition, denote by $b_i$ the integer $a_i+|s_{i-1}-s_i|$.
Observe that the puncture $p_i$ lies on the arcs
\begin{itemize}
\item $[\conc{i-1}{b_i},\conc{i-1}{b_i+1}]_\calD^i$ and
$[\conc{i-1}{b_i},\conc{i-1}{b_i+1}]_{\calD'}^i$ if $s_{i-1} > s_i$;
\item $[\conc{i}{b_i},\conc{i}{b_i+1}]_\calD^i$ and
$[\conc{i}{b_i},\conc{i}{b_i+1}]_{\calD'}^i$ if $s_i > s_{i-1}$;
\item $[\conc{i-1}{a_i+1},\conc{i}{a_i+1}]_\calD^i$ and
$[\conc{i-1}{a_i+1},\conc{i}{a_i+1}]_{\calD'}^i$ if $s_{i-1} = s_i$.
\end{itemize}
Therefore, we can even assume that the above-mentioned isotopy preserves each puncture $p_i$,
which is the statement of Proposition~\ref{pro:same-coordinates-implies-isotopic}.
\end{proof}

\begin{cor}
Let $\beta$ and $\beta'$ be two braids with coordinates $\bs\ba$ and $\bs\ba'$.
If $\bs\ba = \bs\ba'$, then $\beta = \beta'$.
\label{cor:same-coordinates-same-braid}
\end{cor}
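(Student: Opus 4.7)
The strategy is to combine Proposition~\ref{pro:same-coordinates-implies-isotopic} with the freeness of the $B_n$-action on curve diagrams recalled in Section~\ref{subsection:braids}. Concretely, I would pick tight $1$-generalised curve diagrams $\calD$ and $\calD'$ representing $\beta$ and $\beta'$ respectively. By Definition~\ref{dfn:curve-diagram-coordinates}, the coordinate tuples of $\calD$ and $\calD'$ coincide with $\bs\ba$ and $\bs\ba'$, so the hypothesis $\bs\ba = \bs\ba'$ puts us in position to apply Proposition~\ref{pro:same-coordinates-implies-isotopic}. That proposition supplies an ambient isotopy $(\Phi_t)_{t\in[0,1]}$ of $\CC$ with $\Phi_0 = \mathrm{id}$ and $\Phi_1(\calD) = \calD'$, preserving $\bL$ setwise and fixing every puncture $p_i$ pointwise.

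The main obstacle is that, a priori, $\Phi_t$ need not lie in $H_n^\ast$, because nothing in the statement of Proposition~\ref{pro:same-coordinates-implies-isotopic} forces the points $-1$ and $+1$ to stay fixed during the isotopy. I would handle this by noting that both $\calD$ and $\calD'$ share the endpoint set $\{-1,+1\}$, so $\Phi_1$ sends $\{-1,+1\}$ to itself. Inspecting the construction underlying Proposition~\ref{pro:same-coordinates-implies-isotopic}, one first aligns $\calD \cap \bL = \calD' \cap \bL$, then works region-by-region inside each zone $\calZ_i$; the two extremal zones $\calZ_1$ and $\calZ_n$ each contain exactly one of $-1$, $+1$, and the arcs of $\calD$, $\calD'$ reaching that endpoint share it. A standard Alexander-trick rel endpoints in these two zones therefore produces an isotopy fixing $\pm 1$ pointwise (and still preserving $\bL$ setwise and the punctures), which places $(\Phi_t)$ inside $H_n^\ast$. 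This is the only delicate point; the rest is formal.

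Once the isotopy lives in $H_n^\ast$, I write $\calD = h(\bD)$ and $\calD' = h'(\bD)$ for some $h, h' \in H_n^\ast$ representing $\beta$ and $\beta'$ respectively. Then $\Phi_1 \circ h$ is isotopic to $h$ through $H_n^\ast$, hence represents $\beta$, and it sends $\bD$ to $\calD' = h'(\bD)$. Consequently $\calD' \in \beta(\bD) \cap \beta'(\bD)$. Because the $B_n$-action on $\{\gamma(\bD) : \gamma \in B_n\}$ is free, the two orbits $\beta(\bD)$ and $\beta'(\bD)$ cannot overlap unless they coincide, which forces $\beta = \beta'$ and completes the proof.
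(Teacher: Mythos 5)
Your proof correctly fills in the argument the paper leaves implicit: the corollary follows directly from Proposition~\ref{pro:same-coordinates-implies-isotopic} together with the freeness of the $B_n$-action on curve diagrams recalled in Section~\ref{subsection:braids}. The detail you flag about placing the isotopy inside $H_n^\ast$ is legitimate and correctly resolved — since $-1$ and $+1$ are shared endpoints of $\calD$ and $\calD'$, and the construction in Proposition~\ref{pro:same-coordinates-implies-isotopic} proceeds zone by zone with the extremal zones each containing exactly one of $\pm 1$, the isotopy can indeed be taken to fix $\pm 1$ throughout, after which your orbit-disjointness argument closes the proof.
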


These coordinates are therefore analogous to the Dynnikov coordinates
(see~\cite{efficientword,Dehornoy_whyare} for details) in several respects.
First, both arise from counting intersection points between different collections of lines.
Second, both provide an injective mapping from the braid group $B_n$ into the set $\ZZ^{2n}$ or $\ZZ^{2n+1}$.
Finally, both systems of coordinates come with very efficient algorithms, whose complexities are of the same order of magnitude.
However, the coordinates used here are very closely linked with the notion of (diagrammatic or laminated) norm,
whereas the process of computing the norm of a braid from its Dynnikov coordinates is less immediate.

\subsection{From Coordinates to Diagrams}
\label{subsection:from-combinatorics-to-drawings}

Proposition~\ref{pro:same-coordinates-implies-isotopic} and Corollary~\ref{cor:same-coordinates-same-braid}
allow us to identify each tight generalised curve diagram and each braid with a tuple of coordinates.
Aiming to count ($1$-generalised) tight curve diagrams,
we aim now at describing which coordinates correspond to generalised tight curve diagrams.

\begin{lem}
Let $\calD$ be a tight generalised curve diagram, with coordinates $(s_0,a_1,\ldots,s_n)$.
We have $s_0 = s_n = 0$, and $0 \leq a_i \leq 2 \min \{s_{i-1},s_i\} + \mathbf{1}_{s_{i-1} \neq s_i}$
for all integers $i \in \{1,\ldots,n\}$.
\label{lem:inequalities-reduced-coordinates}
\end{lem}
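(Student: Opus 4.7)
My plan is to prove the lemma by a direct application of the definitions, using only the fact that, by definition of $s_i$, the line $\bL_i$ intersects $\calD$ in exactly $2s_i + 1$ points, so that the notation $\conc{i}{j}$ is well defined precisely for indices $1 \leq j \leq 2s_i + 1$. No appeal to Lemma~\ref{lem:left-right-box} or to the geometric structure of the arcs is needed; everything is bookkeeping at the level of index ranges.

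First I would establish $s_0 = s_n = 0$. Since $\bL_0 = \{-1\}$ and $\bL_n = \{+1\}$ are singletons that coincide with the endpoints of the open component of $\calD$, each of the intersections $\bL_0 \cap \calD$ and $\bL_n \cap \calD$ has cardinality exactly $1$, and the formula $s_i = \frac{1}{2}(|\bL_i \cap \calD| - 1)$ yields the two equalities immediately.

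Next I would prove the inequalities on $a_i$, splitting on whether $s_{i-1} = s_i$ or not, which is also the split used in Definition~\ref{dfn:curve-diagram-coordinates}. When $s_{i-1} = s_i$, the integer $a_i$ is characterised by the fact that $p_i$ lies on the arc $[\conc{i-1}{a_i+1}, \conc{i}{a_i+1}]_\calD$, so the index $a_i + 1$ must satisfy $1 \leq a_i + 1 \leq 2s_{i-1} + 1$, giving $0 \leq a_i \leq 2s_{i-1} = 2\min\{s_{i-1}, s_i\}$; this matches the claim since the indicator vanishes. When $s_{i-1} \neq s_i$, the set whose maximum defines $a_i$ contains $0$ vacuously (so $a_i \geq 0$) and consists of integers $j$ for which $\conc{i-1}{k}$ and $\conc{i}{k}$ are both defined for every $k \in \{1,\ldots,j\}$; this forces $j \leq \min\{2s_{i-1}+1, 2s_i+1\} = 2\min\{s_{i-1}, s_i\} + 1$, and taking the maximum yields $a_i \leq 2\min\{s_{i-1}, s_i\} + 1$, matching the indicator value $1$.

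I do not anticipate any substantial obstacle: the lemma is essentially a bookkeeping statement about the combinatorial ranges in which the coordinates $(s_0, a_1, \ldots, s_n)$ necessarily live, and both bounds follow directly from the definitions once the right indices are counted. The only small subtlety is to handle the two cases defining $a_i$ in parallel so that the indicator $\mathbf{1}_{s_{i-1} \neq s_i}$ appears naturally in a unified bound.
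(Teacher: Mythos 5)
Your proof is correct and takes essentially the same approach as the paper: both are direct bookkeeping arguments from Definition~\ref{dfn:curve-diagram-coordinates}, splitting on whether $s_{i-1}=s_i$ and bounding $a_i$ by the size of the relevant index set. The only difference is that you spell out the (trivially implicit) $s_0=s_n=0$ step, which the paper leaves unstated.
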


\begin{proof}
By definition, we have $2 s_i + 1 = |\calD \cap \bL_i|$ whenever $0 \leq i \leq n$.
Hence, whenever $s_{i-1} = s_i$, Definition~\ref{dfn:curve-diagram-coordinates} directly implies that 
$1 \leq a_i+1 \leq |\calD \cap \bL_i| = 2 s_i+1$.
However, if $s_{i-1} \neq s_i$, then the points $\conc{i-1}{j}$ and $\conc{i}{j}$ exist (and are $(\calD,\bL)$-neighbours) whenever $j \leq a_i$,
which proves that
\[0 \leq a_i \leq \min \{|\calD \cap \bL_{i-1}|,|\calD \cap \bL_i|\} = 2 \min \{s_{i-1},s_i\} + 1.\]
\end{proof}

In the following, we call \emph{virtual coordinates} the tuples $(s_0,a_1,\ldots,s_n)$
that satisfy the equalities and inequalities mentioned in Lemma~\ref{lem:inequalities-reduced-coordinates}.
A natural question is, provided some virtual coordinates $\bs\ba$, whether
there exists some tight generalised curve diagram whose coordinates ade $\bs\ba$.
We prove now that this is the case.

\begin{pro}
Let $\bs\ba = (s_0,a_1,\ldots,s_n)$ be virtual coordinates.
There exists some tight generalised curve diagram $\calD$ whose coordinates are $\bs\ba$.
\label{pro:virtual-is-enough}
\end{pro}

\begin{center}
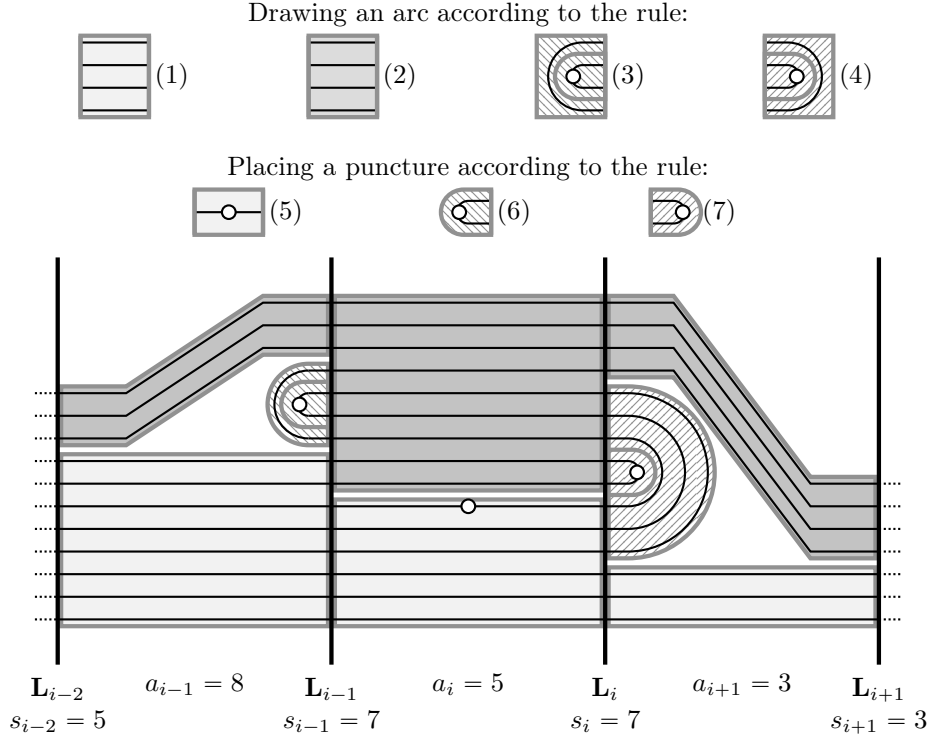
\begin{figure}[!ht]
\begin{center}
\begin{tikzpicture}[scale=0.3]
\draw[draw=blackgray,fill=verypalegray,ultra thick] (-3,21.2) -- (0,21.2) -- (0,24.8) -- (-3,24.8) -- cycle;
\draw[draw=blackgray,fill=palegray,ultra thick] (7,21.2) -- (10,21.2) -- (10,24.8) -- (7,24.8) -- cycle;
\draw[draw=blackgray,pattern=north west lines,pattern color=blackgray,ultra thick] (17,21.2) -- (20,21.2) -- (20,24.8) -- (17,24.8) -- cycle;
\draw[draw=blackgray,pattern=north east lines,pattern color=blackgray,ultra thick] (27,21.2) -- (30,21.2) -- (30,24.8) -- (27,24.8) -- cycle;
\draw[draw=blackgray,ultra thick] (20,22) -- (18.8,22) arc (270:90:1) -- (20,24) -- cycle;
\draw[draw=blackgray,ultra thick] (27,22) -- (28.2,22) arc (-90:90:1) -- (27,24) -- cycle;
\draw[draw=black,thick] (-3,21.5) -- (0,21.5);
\draw[draw=black,thick] (-3,22.5) -- (0,22.5);
\draw[draw=black,thick] (-3,23.5) -- (0,23.5);
\draw[draw=black,thick] (-3,24.5) -- (0,24.5);
\draw[draw=black,thick] (7,21.5) -- (10,21.5);
\draw[draw=black,thick] (7,22.5) -- (10,22.5);
\draw[draw=black,thick] (7,23.5) -- (10,23.5);
\draw[draw=black,thick] (7,24.5) -- (10,24.5);
\draw[draw=black,thick] (20,22.5) -- (19,22.5) arc (270:90:0.5) -- (20,23.5);
\draw[draw=black,thick] (20,21.5) -- (19,21.5) arc (270:90:1.5) -- (20,24.5);
\draw[draw=black,thick] (27,22.5) -- (28,22.5) arc (-90:90:0.5) -- (27,23.5);
\draw[draw=black,thick] (27,21.5) -- (28,21.5) arc (-90:90:1.5) -- (27,24.5);
\draw[draw=blackgray,ultra thick] (-3,21.2) -- (-3,24.8);
\draw[draw=blackgray,ultra thick] (0,21.2) -- (0,24.8);
\draw[draw=blackgray,ultra thick] (7,21.2) -- (7,24.8);
\draw[draw=blackgray,ultra thick] (10,21.2) -- (10,24.8);
\draw[draw=blackgray,ultra thick] (20,21.2) -- (20,24.8);
\draw[draw=blackgray,ultra thick] (27,21.2) -- (27,24.8);
\draw[draw=black,fill=white,thick] (18.6,23) circle (0.3);
\draw[draw=black,fill=white,thick] (28.4,23) circle (0.3);

\draw[draw=blackgray,fill=verypalegray,ultra thick] (2,16) -- (5,16) -- (5,18) -- (2,18) -- cycle;
\draw[draw=blackgray,pattern=north west lines,pattern color=blackgray,ultra thick] (15,16) -- (13.8,16) arc (270:90:1) -- (15,18) -- cycle;
\draw[draw=blackgray,pattern=north east lines,pattern color=blackgray,ultra thick] (22,16) -- (23.2,16) arc (-90:90:1) -- (22,18) -- cycle;
\draw[draw=black,thick] (2,17) -- (5,17);
\draw[draw=black,thick] (15,16.5) -- (14,16.5) arc (270:90:0.5) -- (15,17.5);
\draw[draw=black,thick] (22,16.5) -- (23,16.5) arc (-90:90:0.5) -- (22,17.5);
\draw[draw=blackgray,ultra thick] (2,16) -- (2,18);
\draw[draw=blackgray,ultra thick] (5,16) -- (5,18);
\draw[draw=blackgray,ultra thick] (15,16) -- (15,18);
\draw[draw=blackgray,ultra thick] (22,16) -- (22,18);
\draw[draw=black,fill=white,thick] (3.5,17) circle (0.3);
\draw[draw=black,fill=white,thick] (13.6,17) circle (0.3);
\draw[draw=black,fill=white,thick] (23.4,17) circle (0.3);

\node[anchor=south] at (14,24.8) {Drawing an arc according to the rule:};
\node at (1,23) {$(1)$};
\node at (11,23) {$(2)$};
\node at (21,23) {$(3)$};
\node at (31,23) {$(4)$};

\node[anchor=south] at (14,18) {Placing a puncture according to the rule:};
\node at (6,17) {$(5)$};
\node at (16,17) {$(6)$};
\node at (25,17) {$(7)$};

\draw[draw=blackgray,fill=verypalegray,ultra thick] (-3.85,-1.3) -- (7.85,-1.3) -- (7.85,6.3) -- (-3.85,6.3) -- cycle;
\draw[draw=blackgray,fill=gray,ultra thick] (-3.85,6.7) -- (-1,6.7) -- (5,10.7) -- (7.85,10.7) -- (7.85,13.3) -- (5,13.3) -- (-1,9.3) -- (-3.85,9.3) -- cycle;
\draw[draw=blackgray,pattern=north west lines,pattern color=blackgray,ultra thick] (7.85,10.3) -- (7,10.3) arc (90:270:1.8) -- (7.85,6.7) -- cycle;
\draw[draw=blackgray,ultra thick] (7.85,7.5) -- (6.8,7.5) arc (270:90:1) -- (7.85,9.5);

\draw[draw=blackgray,fill=verypalegray,ultra thick] (8.15,-1.3) -- (19.85,-1.3) -- (19.85,4.3) -- (8.15,4.3) -- cycle;
\draw[draw=blackgray,fill=gray,ultra thick] (8.15,4.7) -- (19.85,4.7) -- (19.85,13.3) -- (8.15,13.3) -- cycle;

\draw[draw=blackgray,fill=verypalegray,ultra thick] (20.15,-1.3) -- (31.85,-1.3) -- (31.85,1.3) -- (20.15,1.3) -- cycle;
\draw[draw=blackgray,fill=gray,ultra thick] (20.15,9.7) -- (23,9.7) -- (29,1.7) -- (31.85,1.7) -- (31.85,5.3) -- (29,5.3) -- (23,13.3) -- (20.15,13.3) -- cycle;
\draw[draw=blackgray,pattern=north east lines,pattern color=blackgray,ultra thick] (20.15,1.7) -- (21,1.7) arc (-90:90:3.8) -- (20.15,9.3) -- cycle;
\draw[draw=blackgray,ultra thick] (20.15,4.5) -- (21.2,4.5) arc (-90:90:1) -- (20.15,6.5);

\draw[draw=black,ultra thick] (-4,-3) -- (-4,15);
\draw[draw=black,ultra thick] (8,-3) -- (8,15);
\draw[draw=black,ultra thick] (20,-3) -- (20,15);
\draw[draw=black,ultra thick] (32,-3) -- (32,15);

\draw[draw=black,thick] (-4,-1) -- (32,-1);
\draw[draw=black,thick] (-4,0) -- (32,0);
\draw[draw=black,thick] (-4,1) -- (32,1);
\draw[draw=black,thick] (-4,2) -- (21,2);
\draw[draw=black,thick] (-4,3) -- (21,3);
\draw[draw=black,thick] (-4,4) -- (21,4);
\draw[draw=black,thick] (-4,5) -- (21,5);
\draw[draw=black,thick] (-4,6) -- (21,6);
\draw[draw=black,thick] (-4,7) -- (-1,7) -- (5,11) -- (23,11) -- (29,3) -- (32,3);
\draw[draw=black,thick] (-4,8) -- (-1,8) -- (5,12) -- (23,12) -- (29,4) -- (32,4);
\draw[draw=black,thick] (-4,9) -- (-1,9) -- (5,13) -- (23,13) -- (29,5) -- (32,5);
\draw[draw=black,thick] (7,7) -- (21,7);
\draw[draw=black,thick] (7,8) -- (21,8);
\draw[draw=black,thick] (7,9) -- (21,9);
\draw[draw=black,thick] (7,10) -- (23,10) -- (29,2) -- (32,2);

\draw[draw=black,thick] (21,2) arc (-90:90:3.5);
\draw[draw=black,thick] (21,3) arc (-90:90:2.5);
\draw[draw=black,thick] (21,4) arc (-90:90:1.5);
\draw[draw=black,thick] (21,5) arc (-90:90:0.5);

\draw[draw=black,thick] (7,7) arc (270:90:1.5);
\draw[draw=black,thick] (7,8) arc (270:90:0.5);

\draw[draw=black,thick,densely dotted] (-5,-1) -- (-4,-1);
\draw[draw=black,thick,densely dotted] (-5,0) -- (-4,0);
\draw[draw=black,thick,densely dotted] (-5,1) -- (-4,1);
\draw[draw=black,thick,densely dotted] (-5,2) -- (-4,2);
\draw[draw=black,thick,densely dotted] (-5,3) -- (-4,3);
\draw[draw=black,thick,densely dotted] (-5,4) -- (-4,4);
\draw[draw=black,thick,densely dotted] (-5,5) -- (-4,5);
\draw[draw=black,thick,densely dotted] (-5,6) -- (-4,6);
\draw[draw=black,thick,densely dotted] (-5,7) -- (-4,7);
\draw[draw=black,thick,densely dotted] (-5,8) -- (-4,8);
\draw[draw=black,thick,densely dotted] (-5,9) -- (-4,9);

\draw[draw=black,thick,densely dotted] (32,-1) -- (33,-1);
\draw[draw=black,thick,densely dotted] (32,0) -- (33,0);
\draw[draw=black,thick,densely dotted] (32,1) -- (33,1);
\draw[draw=black,thick,densely dotted] (32,2) -- (33,2);
\draw[draw=black,thick,densely dotted] (32,3) -- (33,3);
\draw[draw=black,thick,densely dotted] (32,4) -- (33,4);
\draw[draw=black,thick,densely dotted] (32,5) -- (33,5);

\node[anchor=north] at (-4,-3) {\begin{tabular}{c} $\bL_{i-2}$ \\ $s_{i-2}=5$ \end{tabular}};
\node[anchor=north] at (8,-3) {\begin{tabular}{c} $\bL_{i-1}$ \\ $s_{i-1}=7$ \end{tabular}};
\node[anchor=north] at (20,-3) {\begin{tabular}{c} $\bL_i$ \\ $s_i=7$ \end{tabular}};
\node[anchor=north] at (32,-3) {\begin{tabular}{c} $\bL_{i+1}$ \\ $s_{i+1}=3$ \end{tabular}};

\node[anchor=north] at (2,-3) {$a_{i-1}=8$};
\node[anchor=north] at (14,-3) {$a_i=5$};
\node[anchor=north] at (26,-3) {$a_{i+1}=3$};

\draw[draw=black,fill=white,thick] (6.6,8.5) circle (0.3);
\draw[draw=black,fill=white,thick] (14,4) circle (0.3);
\draw[draw=black,fill=white,thick] (21.4,5.5) circle (0.3);
\end{tikzpicture}
\end{center}
\caption{Drawing lines and placing punctures of a diagram based on its coordinates}
\label{fig:virtual-witnesses}
\end{figure}
\end{center}

\begin{proof}
We just need follow the recipe provided by Lemma~\ref{lem:left-right-box}
and draw the diagram $\calD$. First, we call $\conc{0}{1}$ and $\conc{n}{1}$ the points $-1$ and $+1$.
Then, on each line $\bL_i$, for $i \in \{1,\ldots,n-1\}$, let us place $2s_i+1$ points $\conc{i}{1},\ldots,\conc{i}{2s_i+1}$, from bottom to top.
Now, for each integer $i \in \{1,\ldots,n\}$, define the integer $b_i := a_i+|s_{i-1}-s_i|$.
We draw lines that lie inside the zone $\calZ_i$ and that link
\begin{enumerate}
\item points $\conc{i-1}{j}$ and $\conc{i}{j}$ such that $j \leq a_i$;
\item points $\conc{i-1}{j}$ and $\conc{i-1}{k}$ such that $j+k = 2b_i+1$ and $a_i < j < k$, if $s_{i-1} > s_i$;
\item points $\conc{i}{j}$ and $\conc{i}{k}$ such that $j+k = 2b_i+1$ and $a_i < j < k$, if $s_i > s_{i-1}$;
\item points $\conc{i-1}{j}$ and $\conc{i}{k}$ such that $j-k = 2(s_{i-1}-s_i)$ and $\min\{j,k\} > a_i$,
\end{enumerate}
so that no two such lines intersect each other.
Note that it is indeed possible to do so (e.g., if the points $(\conc{i}{j})_{1 \leq j \leq 2s_i+1}$ are close enough to each other,
one can draw straight segments $[\conc{i-1}{j}, \conc{i}{k}]$ and half-circles with diameter $[\conc{i-1}{j},\conc{i-1}{k}]$ or
$[\conc{i}{j},\conc{i}{k}]$).

Then, place a point $\overline{p}_i$ on the arc
\begin{enumerate}
\setcounter{enumi}{4}
 \item $[\conc{i-1}{b_i},\conc{i-1}{b_i+1}]_\calD^i$ if $s_{i-1} > s_i$;
 \item $[\conc{i}{b_i},\conc{i}{b_i+1}]_\calD^i$ if $s_i > s_{i-1}$;
 \item $[\conc{i-1}{a_i+1},\conc{i}{a_i+1}]_\calD^i$ if $s_{i-1} = s_i$.
\end{enumerate}

Up to an isotopy of $\CC$ preserving the lamination $\bL$ pointwise and
mapping each point $\overline{p}_i$ to the actual position of the puncture $p_i$,
we have just drawn the diagram $\calD$.
\end{proof}

\section{Actually Counting Braids}
\label{section:actually-counting-braids}

We proceed now to completing the task we set at the beginning of
Section~\ref{section:counting-braids-with-a-given-norm}:
we study here the integers
$N_{n,k} := |\{\beta \in B_n : \|\beta\|_\ell = k\}| = |\{\beta \in B_n : \|\beta\|_d = k\}|$ and the
geometric generating functions $\calB_n(z) := \sum_{\beta \in B_n} z^{\|\beta\|_d} = \sum_{\beta \in B_n} z^{\|\beta\|_\ell}$
provided by Corollaries~\ref{cor:count-twice-size} and~\ref{cor:equal-series}.

By definition of the coordinates of braids,
if a braid $\beta \in B_n$ has coordinates $\bs\ba := (s_0,a_1,\ldots,s_n)$, then
$\|\beta\|_d = n-1 + 2 \sum_{i=1}^{n-1}s_i$.
Therefore, instead of considering directly the integers $N_{n,k}$ and the function $\calB_n(z)$ themselves,
we will rather focus on the integers $g_{n,k} := N_{n,2k+n-1}$ and on the generating function $\calG_n(z) := \sum_{k \geq 0} g_{n,k} z^k$.
In particular, observe that $\calB_n(z) = z^{n-1} \calG_n(z^2)$,
so that properties about the integers $g_{n,k}$ and on the function $\calG_n(z)$ reflect on the integers $N_{n,k}$ and the function $\calB_n(z)$.

Then, following Proposition~\ref{pro:same-coordinates-implies-isotopic} and~\ref{pro:virtual-is-enough},
we want to count the tuples $\bs\ba$ of virtual coordinates, with a given sum $\sum_{i=1}^{n-1}s_i$,
and whose associated generalised curve diagram is a $1$-generalised curve diagram.

Let $\calD$ be the generalised diagram associated to $\bs\ba$.
We denote by $\sim$ the relation of $(\calD,\bL)$-neighbourhood
(i.e., $P \sim Q$ if and only if $P$ and $Q$ are $(\calD,\bL)$-neighbour endpoints),
and we denote by $\equiv$ the reflexive transitive closure of $\sim$.
Observe that, if $\calD$ is a $k$-generalised curve diagram, then the relation $\equiv$ has exactly $k$ equivalence classes.
Therefore, we aim below at counting coordinates $\bs\ba$ where the relation $\equiv$
has exactly one equivalence class; we will say that $\bs\ba$ are \emph{actual coordinates}.

Aiming to reduce the number of cases to look at, we will
use the symmetries mentioned in Section~\ref{section:from-laminations-to-graphs}.
If a braid $\beta$ has coordinates $(s_0,a_1,s_1,\ldots,a_n,s_n)$,
then its horizontally symmetric braid $\bS_h(\beta)$ has coordinates $(s_n,a_n,s_{n-1},\ldots,a_1,s_0)$ and
its vertically symmetric braid $\bS_v(\beta)$ has coordinates $(s_0,a'_1,s_1,\ldots,a'_n,s_n)$,
where $a'_i = 2 \min\{s_{i-1},s_i\} + \mathbf{1}_{s_{i-1} \neq s_i} - a_i$.

\subsection{An Introductory Example: The Braid Group $B_2$}
\label{subsection:toy-example-n=2}

In the braid group $B_2$, everything is obvious.
Indeed, the group $B_2$ is isomorphic to $\ZZ$, and generated by the Artin braid $\sigma_1$.
Since $\|\sigma_1^k\|_d = 1 + 2|k|$ for all integers $k \in \ZZ$, it follows that
\[g_{2,k} = \mathbf{1}_{k = 0} + 2 \cdot \mathbf{1}_{k \geq 1}, ~ \calG_2(z) = \frac{1+z}{1-z}, ~ \calB_2(z) = \frac{z (1+z^2)}{1-z^2}.\]

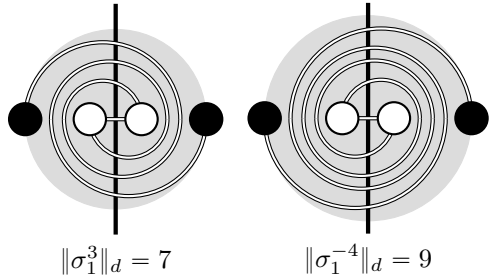
\begin{figure}[!ht]
\begin{center}
\begin{tikzpicture}[scale=0.17]
\draw[fill=palegray,draw=palegray] (7,0) circle (7);

\draw[draw=black,ultra thick] (7,-9) -- (7,9);

\draw[draw=black,ultra thick] (5,0) -- (9,0);

\draw[draw=black,ultra thick] (0,0) arc (180:0:6);
\draw[draw=black,ultra thick] (2,0) arc (180:0:4.5);
\draw[draw=black,ultra thick] (3,0) arc (180:0:3);

\draw[draw=black,ultra thick] (2,0) arc (180:360:6);
\draw[draw=black,ultra thick] (3,0) arc (180:360:4.5);
\draw[draw=black,ultra thick] (5,0) arc (180:360:3);

\draw[draw=white,thick] (5,0) -- (9,0);

\draw[draw=white,thick] (0,0) arc (180:0:6);
\draw[draw=white,thick] (2,0) arc (180:0:4.5);
\draw[draw=white,thick] (3,0) arc (180:0:3);

\draw[draw=white,thick] (2,0) arc (180:360:6);
\draw[draw=white,thick] (3,0) arc (180:360:4.5);
\draw[draw=white,thick] (5,0) arc (180:360:3);

\PUNCTURE{0}
\PUNCTURE{14}

\puncture{5}
\puncture{9}

\node at (7,-11) {$\|\sigma_1^3\|_d =7$};
\end{tikzpicture}
{\tiny~}
\begin{tikzpicture}[scale=0.17]
\draw[fill=palegray,draw=palegray] (8,0) circle (8);

\draw[draw=black,ultra thick] (8,-9) -- (8,9);

\draw[draw=black,ultra thick] (6,0) -- (10,0);

\draw[draw=black,ultra thick] (2,0) arc (180:0:7);
\draw[draw=black,ultra thick] (3,0) arc (180:0:5.5);
\draw[draw=black,ultra thick] (4,0) arc (180:0:4.5);
\draw[draw=black,ultra thick] (6,0) arc (180:0:3);

\draw[draw=black,ultra thick] (0,0) arc (180:360:7);
\draw[draw=black,ultra thick] (2,0) arc (180:360:5.5);
\draw[draw=black,ultra thick] (3,0) arc (180:360:4.5);
\draw[draw=black,ultra thick] (4,0) arc (180:360:3);

\draw[draw=white,thick] (6,0) -- (10,0);

\draw[draw=white,thick] (2,0) arc (180:0:7);
\draw[draw=white,thick] (3,0) arc (180:0:5.5);
\draw[draw=white,thick] (4,0) arc (180:0:4.5);
\draw[draw=white,thick] (6,0) arc (180:0:3);

\draw[draw=white,thick] (0,0) arc (180:360:7);
\draw[draw=white,thick] (2,0) arc (180:360:5.5);
\draw[draw=white,thick] (3,0) arc (180:360:4.5);
\draw[draw=white,thick] (4,0) arc (180:360:3);

\PUNCTURE{0}
\PUNCTURE{16}

\puncture{6}
\puncture{10}

\node at (8,-11) {$\|\sigma_1^{-4}\|_d =9$};
\end{tikzpicture}
\end{center}
\caption{$\|\sigma_1^k\|_d = 2k+1$}
\label{fig:norm-in-B2}
\end{figure}

Let us recover this result with the tools introduced above,
in particular the reflexive transitive closure (denoted by $\equiv$) of
the $(\calD,\bL)$-neighbourhood relation (denoted by $\sim$):
we detail computations as a warm-up.
A braid $\beta$ with norm $2k+1$ has coordinates
of the form $(0,a_1,k,a_2,0)$, with $k \geq 0$ and $a_1, a_2 \in \{0,1\}$.

If $k = 0$, then $\bs\ba = (0,0,0,0,0)$,
hence $\{\conc{0}{1},\conc{1}{1},\conc{2}{1}\}$ is the only equivalence class of $\equiv$.
It follows that $g_{2,0} = 0$.

We consider now the case $k \geq 1$.
Using the vertical symmetry, we focus on the case $a_1 \leq a_2$.
If $a_1 = 1$, then $a_2 = 1$, hence
$\{\conc{0}{1},\conc{1}{1},\conc{2}{1}\}$ is an equivalence class of $\equiv$ that does not contain $\conc{1}{2k+1}$.
Similarly, if $a_2 = 0$, then $a_1 = 0$, hence
$\{\conc{0}{1},\conc{1}{2k+1},\conc{2}{1}\}$ is an equivalence class of $\equiv$ that does not contain $\conc{1}{1}$.

Therefore, we must have $\bs\ba = (0,0,k,1,0)$ (or $\bs\ba = (0,1,k,0,0)$, but we decided to let this case aside for now).
Hence, Lemma~\ref{lem:left-right-box} proves that
\[\conc{0}{1} \sim \conc{1}{2k+1} \sim \conc{1}{2} \sim \conc{1}{2k-1} \sim \ldots \sim \conc{1}{3} \sim \conc{1}{2k-2} \sim \conc{1}{1} \sim \conc{2}{1}.\]
This proves that $(0,0,k,1,0)$ and $(0,1,k,0,0)$ are actual coordinates, and therefore that
$g_{2,k} = 2$.
We deduce from these values of $g_{2,k}$ the above expression of the functions $\calG_2(z) = \sum_{k \geq 0}z^k$ and $\calB_2(z) = z \calG_2(z^2)$.

This second proof is longer and more convoluted than the direct proof obtained by
enumerating the braids in the group $B_2$.
However, enumerating the braids in $B_3$ seems out of reach,
whereas considering virtual coordinates and identifying which are actual coordinates
will be possible, as shown in Section~\ref{subsection:challenging-example-n=3}.

\subsection{A Challenging Example: The Braid Group $B_3$}
\label{subsection:challenging-example-n=3}

Holonomic functions are univariate power series $f$ that satisfy some linear differential equation
\[\sum_{i=0}^k c_i(z) \frac{\partial^i}{\partial z^i}f(z) = 0,\]
where $c_0(z), \ldots, c_k(z)$ are complex polynomials.
This class of function generalises rational and algebraic functions, and is closed under various operations,
such as addition, multiplication, term-wise multiplication and
algebraic substitution (i.e. replacing the function $z \mapsto f(z)$ by some function $z \mapsto f(g(z))$
where $g$ is a solution of some equation $P(z,g(z)) = 0$, $P$ being some non-degenerate polynomial).

A short introduction on holonomic series and their use in analytic combinatorics,
including the associated tools for manipulating holonomic series,
can be found in~\cite[Annex B.4]{Flajolet:2009:AC:1506267}.

Our central result is the following one.

\begin{thm}
The integers $g_{3,k}$ and the generating functions $\calG_3(z)$ and $\calB_3(z)$ are given by:
\begin{eqnarray*}
\calG_3(z) & = & 2 \frac{1+2z-z^2}{z^2(1-z^2)} \left(\sum_{n \geq 3} \varphi(n) z^n\right) + \frac{1-3z^2}{1-z^2}, \\
\calB_3(z) & = & 2 \frac{1+2z^2-z^4}{z^2(1-z^4)} \left(\sum_{n \geq 3} \varphi(n) z^{2n}\right) + \frac{z^2(1-3z^4)}{1-z^4}, \\
g_{3,k} & = & \mathbf{1}_{k=0} + 2 \left(
\varphi(k+2) - \mathbf{1}_{k \in 2\ZZ} + 2 \sum_{i=1}^{\lfloor k/2 \rfloor} \varphi(k+3-2i) \right) \mathbf{1}_{k \geq 1},
\end{eqnarray*}
where $\varphi$ denotes the Euler totient. The functions $\calG_3(z)$ and $\calB_3(z)$ are not holonomic.
\label{thm:G-and-L3}
\end{thm}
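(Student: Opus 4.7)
The plan is to parametrise tight generalised curve diagrams representing braids of $B_3$ by their virtual coordinates $(0,a_1,s_1,a_2,s_2,a_3,0)$, where $a_1 \in \{0,1\}$ if $s_1 \geq 1$ (and $a_1 = 0$ otherwise), $a_3$ is constrained similarly by $s_2$, and $a_2 \in \{0,\ldots,2\min\{s_1,s_2\}+\mathbf{1}_{s_1 \neq s_2}\}$. By Propositions~\ref{pro:same-coordinates-implies-isotopic} and~\ref{pro:virtual-is-enough}, computing $g_{3,k}$ reduces to counting such tuples with $s_1+s_2 = k$ for which the associated $(\calD,\bL)$-neighbourhood relation $\equiv$ has a single equivalence class, so that the underlying generalised diagram is actually a $1$-generalised curve diagram.

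Next, I would write down, using Lemma~\ref{lem:left-right-box}, the explicit arc structure in each zone $\calZ_i$. The zone-$\calZ_2$ arcs split into a straight matching of the first $a_2$ points on each side, an arc-like matching of $|s_1-s_2|$ pairs on whichever of $\bL_1$ or $\bL_2$ carries more endpoints, and a straight cross matching of the remaining points; zones $\calZ_1$ and $\calZ_3$ behave like asymmetric instances of $\calZ_2$ with $s_0 = s_3 = 0$, and each admits only two configurations, indexed respectively by $a_1$ and $a_3$. The union of these three matchings is exactly $\sim$ on $\calD \cap \bL$, so the number of equivalence classes of $\equiv$ equals the number of connected components of the underlying generalised diagram. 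The symmetries $\bS_v$ and $\bS_h$ of Lemma~\ref{lem:invariant-norm} let me restrict to, say, $s_1 \geq s_2$ while tracking the doubling factors they introduce.

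The heart of the argument is to recognise that, once the boundary cases $s_1 s_2 = 0$ and $s_1 = s_2$ have been treated directly (these account for the $\mathbf{1}_{k=0}$ term and the $-\mathbf{1}_{k \in 2\ZZ}$ correction, while the symmetric $(p,p)$ case is handled by noting that $\calZ_2$ reduces to a plain straight matching), the single-class condition on the remaining tuples with $(s_1,s_2) = (p,q)$, $p > q \geq 1$, reduces to a coprimality condition. Concretely, the composed zone-$\calZ_1$ and zone-$\calZ_2$ matchings on $\bL_1 \cap \calD$ form a rotation on a cyclic set of size $n$ that depends linearly on $p+q$, whose shift is an affine function of $a_2$; after coupling with zone $\calZ_3$, the condition that $\equiv$ be a single class becomes precisely $\gcd(\text{shift},n) = 1$. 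Thus the number of admissible $a_2$ for a given $n$ equals $\varphi(n)$, and grouping the double sum over $(p,q)$ by $n$ produces the Lambert-like series $\sum_{n \geq 3} \varphi(n) z^n$ appearing in the closed formula.

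The conversion to $\calG_3(z) = \sum_k g_{3,k} z^k$ and then to $\calB_3(z) = z^2 \calG_3(z^2)$ is a matter of summing the resulting geometric progressions against $\sum_{n \geq 3} \varphi(n) z^n$. For the non-holonomicity claim, I would rely on the Lambert-series identity $\sum_{n \geq 1} \varphi(n) z^n/(1-z^n) = z/(1-z)^2$, which forces $L(z) := \sum_{n \geq 1} \varphi(n) z^n$ to have a singularity at every root of unity and hence the unit circle as natural boundary; since a holonomic function has only finitely many singularities in $\CC$ (cf.~\cite[Annex~B.4]{Flajolet:2009:AC:1506267}), the function $L(z)$ is not holonomic, and since $\calG_3(z)$ differs from a nonzero rational multiple of $L(z)$ by a rational function, neither $\calG_3(z)$ nor $\calB_3(z)$ is holonomic. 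The main obstacle will be the third step: identifying the cyclic-group structure behind the matchings, pinning down the exact shift, and carefully tracking the parity and boundary corrections as the per-$(p,q)$ counts assemble into the final formula.
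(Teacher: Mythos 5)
Your combinatorial strategy (Steps~1--4) is essentially the one the paper takes: parametrise by coordinates $(0,a_1,s_1,a_2,s_2,a_3,0)$, reduce the single-class condition on $\equiv$ to a cyclicity condition on a permutation attached to the zone matchings, classify cyclicity by a gcd condition, and assemble $\sum_{n}\varphi(n)z^n$. One caveat: your description of the permutation as ``a rotation \ldots\ whose shift is an affine function of $a_2$'' is an oversimplification. In the paper the relevant permutation $\theta$ is a translation only when $a_2=0$; for $a_2 \geq 1$ it is a \emph{translated cut} $\bT\bC^{\bu\bt}_{n,a,b,c}$, which is shown to be cyclic iff $(c-1)\wedge(b+1)=1$ by a three-step conjugation-and-restriction argument, not by inspection of a single shift. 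You would need to discover and prove the analogue of Lemma~\ref{lem:sliding-cut-cyclic} to make this step rigorous; as written it is plausible but unproven.

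The genuine gap is in your non-holonomicity argument. You assert that the Lambert identity $\sum_{n\ge 1}\varphi(n)\,z^n/(1-z^n)=z/(1-z)^2$ ``forces $L(z)=\sum\varphi(n)z^n$ to have a singularity at every root of unity.'' That inference does not follow: the Lambert series and $L(z)$ are different functions, and the identity in fact shows the Lambert series extends to a rational function with a single pole at $z=1$, saying nothing direct about where $L(z)$ is singular. Even after M\"obius inversion, $L(z)=\sum_{d\geq 1}\mu(d)\,z^d/(1-z^d)^2$, whether the poles at roots of unity cancel in that sum requires a separate estimate. The standard way to rescue your plan is the P\'olya--Carlson theorem: $L(z)$ has integer coefficients and radius of convergence $1$, hence is either rational or has the unit circle as natural boundary; it is not rational because $\varphi$ is not eventually a quasi-polynomial; therefore natural boundary, therefore infinitely many singularities, therefore not holonomic. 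That is a valid and genuinely different route. The paper instead assumes $(\varphi(n))$ is $P$-recursive with polynomial coefficients $A_0,\ldots,A_k$, uses Euler's identity $\prod_p(1-p^{-1})=0$ together with the Chinese remainder theorem to make $\varphi(n+i)$ tiny for $i\neq\ell$ along a congruence class, and Dirichlet's theorem to make $\varphi(n+\ell)=n+\ell-1$ infinitely often there, giving a direct contradiction with the recurrence---fully elementary and self-contained. Both routes work; yours is slicker once P\'olya--Carlson is invoked, but the invocation is essential and is missing from what you wrote.

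Finally, the passage from $\calG_3$ not holonomic to $\calB_3$ not holonomic needs the same care as the paper's: since $\calG_3(z)=z^{-1}\calB_3(z^{1/2})$ and $z\mapsto z^{1/2}$ is algebraic, holonomicity of $\calB_3$ would force holonomicity of $\calG_3$; you state the implication in the right direction, which is fine.
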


In particular, the functions $\calG_3(z)$ and $\calB_3(z)$ are neither rational nor algebraic.
Furthermore, and since the ordinary generating function $\calG_3(z) = \sum_{k \geq 0}g_{3,k}z^k$ is not holonomic,
neither are the exponential generating function $\sum_{k \geq 0}\frac{g_{3,k}}{k!}z^k$
nor the Poisson generating function $\exp(-z) \sum_{k \geq 0}\frac{g_{3,k}}{k!}z^k$.

The following sections are devoted to the proof of Theorem~\ref{thm:G-and-L3},
which is done in five steps.

\subsubsection{Proof of Theorem~\ref{thm:G-and-L3} -- Step 1: Simple Cases}
\label{subsubsection:proof-1}

Following the preliminary remarks we made at the beginning of this section,
we focus on virtual coordinates the form
$\bs\ba = (0,a_1,k,a_2,\ell,a_3,0)$ associated to some generalised curve diagram $\calD$
whose induced relation $\equiv$ has an unique equivalence class.
Henceforth, we consider $\ell$ and $k$ as parameters, and compute the integer
\[C_{k,\ell} = |\{(a_1,a_2,a_3) : (0,a_1,k,a_2,\ell,a_3,0) \text{ are actual coordinates}\}|.\]
Using the vertical symmetry, we know that $C_{k,\ell} = C_{\ell,k}$.
We focus below on the case where $\ell \leq k$,
and proceed to a disjunction of cases.

First, if $k = \ell = 0$, then $\bs\ba = (0,0,0,0,0,0,0)$ are the coordinates of the trivial braid $\varepsilon \in B_3$.
Therefore, $C_{0,0} = 1$.

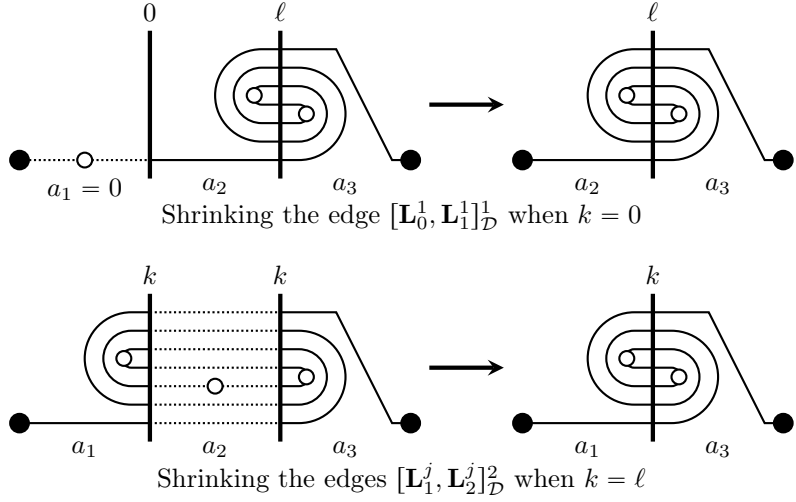
\begin{figure}[!ht]
\begin{center}
\begin{tikzpicture}[scale=0.245]
\draw[draw=black,ultra thick] (7,-1) -- (7,7);
\draw[draw=black,ultra thick] (14,-1) -- (14,7);

\draw[draw=black,fill=black,thick] (0,0) circle (0.5);
\draw[draw=black,fill=black,thick] (21,0) circle (0.5);

\draw[draw=black,thick,densely dotted] (0,0) -- (7,0);

\draw[draw=black,thick] (7,0) -- (14,0);
\draw[draw=black,thick] (14,1) -- (13,1) arc (270:90:2.5) -- (14,6);
\draw[draw=black,thick] (14,2) -- (13,2) arc (270:90:1.5) -- (14,5);
\draw[draw=black,thick] (14,3) -- (13,3) arc (270:90:0.5) -- (14,4);

\draw[draw=black,thick] (14,0) -- (15,0) arc (-90:90:2.5) -- (14,5);
\draw[draw=black,thick] (14,1) -- (15,1) arc (-90:90:1.5) -- (14,4);
\draw[draw=black,thick] (14,2) -- (15,2) arc (-90:90:0.5) -- (14,3);
\draw[draw=black,thick] (14,6) -- (17,6) -- (20,0) -- (21,0);

\draw[draw=black,fill=white,thick] (3.5,0) circle (0.4);
\draw[draw=black,fill=white,thick] (12.6,3.5) circle (0.4);
\draw[draw=black,fill=white,thick] (15.4,2.5) circle (0.4);

\node[anchor=north] at (3.5,-0.5) {$a_1 = 0$};
\node[anchor=south] at (7,7) {$0$};
\node[anchor=north] at (10.5,-0.5) {$a_2$};
\node[anchor=south] at (14,7) {$\ell$};
\node[anchor=north] at (17.5,-0.5) {$a_3$};

\draw[draw=black,ultra thick,>=stealth, ->] (22,3) -- (26,3);

\draw[draw=black,ultra thick] (34,-1) -- (34,7);

\draw[draw=black,fill=black,thick] (27,0) circle (0.5);
\draw[draw=black,fill=black,thick] (41,0) circle (0.5);

\draw[draw=black,thick] (27,0) -- (34,0);
\draw[draw=black,thick] (34,1) -- (33,1) arc (270:90:2.5) -- (34,6);
\draw[draw=black,thick] (34,2) -- (33,2) arc (270:90:1.5) -- (34,5);
\draw[draw=black,thick] (34,3) -- (33,3) arc (270:90:0.5) -- (34,4);

\draw[draw=black,thick] (34,0) -- (35,0) arc (-90:90:2.5) -- (34,5);
\draw[draw=black,thick] (34,1) -- (35,1) arc (-90:90:1.5) -- (34,4);
\draw[draw=black,thick] (34,2) -- (35,2) arc (-90:90:0.5) -- (34,3);
\draw[draw=black,thick] (34,6) -- (37,6) -- (40,0) -- (41,0);

\draw[draw=black,fill=white,thick] (32.6,3.5) circle (0.4);
\draw[draw=black,fill=white,thick] (35.4,2.5) circle (0.4);

\node[anchor=north] at (30.5,-0.5) {$a_2$};
\node[anchor=south] at (34,7) {$\ell$};
\node[anchor=north] at (37.5,-0.5) {$a_3$};

\node at (20.5,-3) {Shrinking the edge $[\conc{0}{1}, \conc{1}{1}]_\calD^1$ when $k = 0$};
\end{tikzpicture}

\medskip

\begin{tikzpicture}[scale=0.245]
\draw[draw=black,ultra thick] (7,-1) -- (7,7);
\draw[draw=black,ultra thick] (14,-1) -- (14,7);

\draw[draw=black,fill=black,thick] (0,0) circle (0.5);
\draw[draw=black,fill=black,thick] (21,0) circle (0.5);

\draw[draw=black,thick] (0,0) -- (7,0);
\draw[draw=black,thick] (7,1) -- (6,1) arc (270:90:2.5) -- (7,6);
\draw[draw=black,thick] (7,2) -- (6,2) arc (270:90:1.5) -- (7,5);
\draw[draw=black,thick] (7,3) -- (6,3) arc (270:90:0.5) -- (7,4);

\draw[draw=black,thick,densely dotted] (7,0) -- (14,0);
\draw[draw=black,thick,densely dotted] (7,1) -- (14,1);
\draw[draw=black,thick,densely dotted] (7,2) -- (14,2);
\draw[draw=black,thick,densely dotted] (7,3) -- (14,3);
\draw[draw=black,thick,densely dotted] (7,4) -- (14,4);
\draw[draw=black,thick,densely dotted] (7,5) -- (14,5);
\draw[draw=black,thick,densely dotted] (7,6) -- (14,6);

\draw[draw=black,thick] (14,0) -- (15,0) arc (-90:90:2.5) -- (14,5);
\draw[draw=black,thick] (14,1) -- (15,1) arc (-90:90:1.5) -- (14,4);
\draw[draw=black,thick] (14,2) -- (15,2) arc (-90:90:0.5) -- (14,3);
\draw[draw=black,thick] (14,6) -- (17,6) -- (20,0) -- (21,0);

\draw[draw=black,fill=white,thick] (5.6,3.5) circle (0.4);
\draw[draw=black,fill=white,thick] (10.5,2) circle (0.4);
\draw[draw=black,fill=white,thick] (15.4,2.5) circle (0.4);

\node[anchor=north] at (3.5,-0.5) {$a_1$};
\node[anchor=south] at (7,7) {$k$};
\node[anchor=north] at (10.5,-0.5) {$a_2$};
\node[anchor=south] at (14,7) {$k$};
\node[anchor=north] at (17.5,-0.5) {$a_3$};

\draw[draw=black,ultra thick,>=stealth,->] (22,3) -- (26,3);

\draw[draw=black,ultra thick] (34,-1) -- (34,7);

\draw[draw=black,fill=black,thick] (27,0) circle (0.5);
\draw[draw=black,fill=black,thick] (41,0) circle (0.5);

\draw[draw=black,thick] (27,0) -- (34,0);
\draw[draw=black,thick] (34,1) -- (33,1) arc (270:90:2.5) -- (34,6);
\draw[draw=black,thick] (34,2) -- (33,2) arc (270:90:1.5) -- (34,5);
\draw[draw=black,thick] (34,3) -- (33,3) arc (270:90:0.5) -- (34,4);

\draw[draw=black,thick] (34,0) -- (35,0) arc (-90:90:2.5) -- (34,5);
\draw[draw=black,thick] (34,1) -- (35,1) arc (-90:90:1.5) -- (34,4);
\draw[draw=black,thick] (34,2) -- (35,2) arc (-90:90:0.5) -- (34,3);
\draw[draw=black,thick] (34,6) -- (37,6) -- (40,0) -- (41,0);

\draw[draw=black,fill=white,thick] (32.6,3.5) circle (0.4);
\draw[draw=black,fill=white,thick] (35.4,2.5) circle (0.4);

\node[anchor=north] at (30.5,-0.5) {$a_1$};
\node[anchor=south] at (34,7) {$k$};
\node[anchor=north] at (37.5,-0.5) {$a_3$};

\node at (20.5,-3) {Shrinking the edges $[\conc{1}{j},\conc{2}{j}]_\calD^2$ when $k = \ell$};
\end{tikzpicture}
\end{center}
\caption{Shrinking edges of tight generalised curve diagrams when $k = 0$ and $k = \ell$}
\label{fig:collapse}
\end{figure}

If $k = 0 < \ell$, then $a_1 = 0$, and $\bs\ba$ are actual coordinates if and only if
$(0,a_2,\ell,a_3,0)$ are also actual coordinates.
Indeed, as illustrated by Fig.~\ref{fig:collapse},
the virtual coordinates $(0,a_2,\ell,a_3,0)$ can be obtained from $(0,a_1,0,a_2,\ell,a_3,0)$
by ``shrinking'' the edge $[\conc{0}{1}, \conc{1}{1}]_\calD^1$.
Hence, $\bs\ba$ are actual coordinates if and only if $\{a_2,a_3\} = \{0,1\}$.
It follows that $C_{0,\ell} = C_{\ell,0} = 2$.

Similarly, if $1 \leq k = \ell$, then $\bs\ba$ are actual coordinates if and only if $0 \leq a_2 \leq 2k$ and if
$(0,a_1,k,a_3,0)$ are also actual coordinates:
as illustrated by Fig.~\ref{fig:collapse},
the virtual coordinates $(0,a_1,k,a_3,0)$ can be obtained from $(0,a_1,k,a_2,k,a_3,0)$ by
``shrinking'' each edge $[\conc{1}{j}, \conc{2}{j}]_\calD^2$, when $1 \leq j \leq 2k+1$.
We therefore have $2k+1$ ways of choosing $a_2$ and $2$ ways of choosing $(a_1,a_3)$, which proves that $C_{k,k} = 2(2k+1)$.

\subsubsection{Proof of Theorem~\ref{thm:G-and-L3} -- Step 2: Towards Cyclic Permutations}
\label{subsubsection:proof-2}

We consider now the case where $1 \leq k < \ell$.
Using the \emph{horizontal} symmetry,
we may focus on the case where $a_1 = 1$: doing so, we will find exactly half of the
actual coordinates $(0,a_1,k,a_2,\ell,a_3,0)$.

In order to ease subsequent computations, we decide here to modify slightly the generalised curve diagram $\calD$ we drew from the coordinates $\bs\ba$,
as illustrated in Fig.~\ref{fig:one-more-curve}.
We proceed as follows:
\begin{itemize}
 \item we add points $\conc{1}{2k+2}$ and $\conc{2}{2\ell+2}$ on the lines $\bL_1$ and $\bL_2$, \emph{above} the points $\conc{1}{2k+1}$ and $\conc{2}{2\ell+1}$;
 \item we draw a curve (drawn in gray in Fig.~\ref{fig:one-more-curve}) from $\conc{0}{1}$ to $\conc{3}{1}$,
 that does not cross the other curves of $\calD$,
 and that crosses the lines $\bL_1$ and $\bL_2$ at $\conc{1}{2k+2}$ and $\conc{2}{2\ell+2}$.
\end{itemize}
Informally, we decided to ``close by above'' the unique open curve contained in $\calD$.
Recall that $\sim$ denotes the $(\calD,\bL)$-neighbourhood relation, and that $\equiv$ denotes the
reflexive transitive closure of $\sim$.
What we just did was to add the relations $\conc{0}{1} \sim \conc{1}{2k+2} \sim \conc{2}{2\ell+2} \sim \conc{3}{1}$.
Since $\calD$ already contained an open curve with endpoints $\conc{0}{1}$ and $\conc{3}{1}$,
adding these points, curves and relations did not change the number of equivalence classes of the relation $\equiv$.

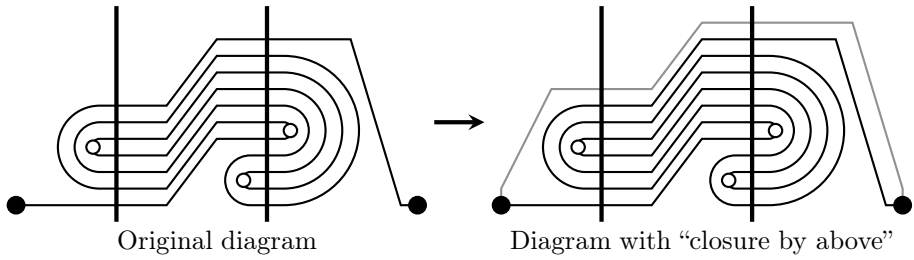
\begin{figure}[!ht]
\begin{center}
\begin{tikzpicture}[scale=0.22]
\draw[draw=black,ultra thick] (35,-1) -- (35,12);
\draw[draw=black,ultra thick] (44,-1) -- (44,12);

\draw[draw=black,fill=black,thick] (29,0) circle (0.5);
\draw[draw=black,fill=black,thick] (53,0) circle (0.5);

\draw[draw=black,thick] (29,0) -- (35,0);
\draw[draw=black,thick] (35,1) -- (34,1) arc (270:90:2.5) -- (35,6);
\draw[draw=black,thick] (35,2) -- (34,2) arc (270:90:1.5) -- (35,5);
\draw[draw=black,thick] (35,3) -- (34,3) arc (270:90:0.5) -- (35,4);

\draw[draw=black,thick] (44,0) -- (43,0) arc (270:90:1.5) -- (44,3);
\draw[draw=black,thick] (44,1) -- (43,1) arc (270:90:0.5) -- (44,2);
\draw[draw=black,thick] (35,0) -- (38,0) -- (41,4) -- (44,4);
\draw[draw=black,thick] (35,1) -- (38,1) -- (41,5) -- (44,5);
\draw[draw=black,thick] (35,2) -- (38,2) -- (41,6) -- (44,6);
\draw[draw=black,thick] (35,3) -- (38,3) -- (41,7) -- (44,7);
\draw[draw=black,thick] (35,4) -- (38,4) -- (41,8) -- (44,8);
\draw[draw=black,thick] (35,5) -- (38,5) -- (41,9) -- (44,9);
\draw[draw=black,thick] (35,6) -- (38,6) -- (41,10) -- (44,10);

\draw[draw=black,thick] (44,0) -- (45,0) arc (-90:90:4.5) -- (44,9);
\draw[draw=black,thick] (44,1) -- (45,1) arc (-90:90:3.5) -- (44,8);
\draw[draw=black,thick] (44,2) -- (45,2) arc (-90:90:2.5) -- (44,7);
\draw[draw=black,thick] (44,3) -- (45,3) arc (-90:90:1.5) -- (44,6);
\draw[draw=black,thick] (44,4) -- (45,4) arc (-90:90:0.5) -- (44,5);
\draw[draw=black,thick] (44,10) -- (49,10) -- (52,0) -- (53,0);

\draw[draw=black,fill=white,thick] (33.6,3.5) circle (0.4);
\draw[draw=black,fill=white,thick] (42.6,1.5) circle (0.4);
\draw[draw=black,fill=white,thick] (45.4,4.5) circle (0.4);

\node[anchor=north] at (41,-1) {Original diagram};

\draw[draw=black,ultra thick,>=stealth,->] (54,5) -- (57,5);

\draw[draw=blackgray,thick] (58,0) -- (58,1) -- (61,7) -- (67,7) -- (70,11) -- (79,11) -- (82,1) -- (82,0);

\draw[draw=black,ultra thick] (64,-1) -- (64,12);
\draw[draw=black,ultra thick] (73,-1) -- (73,12);

\draw[draw=black,fill=black,thick] (58,0) circle (0.5);
\draw[draw=black,fill=black,thick] (82,0) circle (0.5);

\draw[draw=black,thick] (58,0) -- (64,0);
\draw[draw=black,thick] (64,1) -- (63,1) arc (270:90:2.5) -- (64,6);
\draw[draw=black,thick] (64,2) -- (63,2) arc (270:90:1.5) -- (64,5);
\draw[draw=black,thick] (64,3) -- (63,3) arc (270:90:0.5) -- (64,4);

\draw[draw=black,thick] (73,0) -- (72,0) arc (270:90:1.5) -- (73,3);
\draw[draw=black,thick] (73,1) -- (72,1) arc (270:90:0.5) -- (73,2);
\draw[draw=black,thick] (64,0) -- (67,0) -- (70,4) -- (73,4);
\draw[draw=black,thick] (64,1) -- (67,1) -- (70,5) -- (73,5);
\draw[draw=black,thick] (64,2) -- (67,2) -- (70,6) -- (73,6);
\draw[draw=black,thick] (64,3) -- (67,3) -- (70,7) -- (73,7);
\draw[draw=black,thick] (64,4) -- (67,4) -- (70,8) -- (73,8);
\draw[draw=black,thick] (64,5) -- (67,5) -- (70,9) -- (73,9);
\draw[draw=black,thick] (64,6) -- (67,6) -- (70,10) -- (73,10);

\draw[draw=black,thick] (73,0) -- (74,0) arc (-90:90:4.5) -- (73,9);
\draw[draw=black,thick] (73,1) -- (74,1) arc (-90:90:3.5) -- (73,8);
\draw[draw=black,thick] (73,2) -- (74,2) arc (-90:90:2.5) -- (73,7);
\draw[draw=black,thick] (73,3) -- (74,3) arc (-90:90:1.5) -- (73,6);
\draw[draw=black,thick] (73,4) -- (74,4) arc (-90:90:0.5) -- (73,5);
\draw[draw=black,thick] (73,10) -- (78,10) -- (81,0) -- (82,0);

\draw[draw=black,fill=white,thick] (62.6,3.5) circle (0.4);
\draw[draw=black,fill=white,thick] (71.6,1.5) circle (0.4);
\draw[draw=black,fill=white,thick] (74.4,4.5) circle (0.4);

\node[anchor=north] at (70,-1) {Diagram with ``closure by above''};
\end{tikzpicture}
\end{center}
\caption{Closing the open curve of $\calD$ by above}
\label{fig:one-more-curve}
\end{figure}

From now on, and in the rest of the proof of Theorem~\ref{thm:G-and-L3},
we will only use such ``closed by above'' generalised diagrams,
and we will identify $\calD$ with this ``closed by above'' version.

Then, let us define the integer $m := \ell-k$. Since $0 \leq k \leq \ell$, observe that:
\begin{itemize}
\item $\conc{0}{1} \sim \conc{1}{2k+2} \sim \conc{2}{2\ell+2} \sim \conc{3}{1}$;
\item $\conc{1}{j} \sim \conc{2}{j}$ for all $j \in \{1,\ldots,a_2\}$;
\item $\conc{1}{j} \sim \conc{2}{j+2m}$ for all $j \in \{a_2+1,\ldots,2k+1\}$.
\end{itemize}

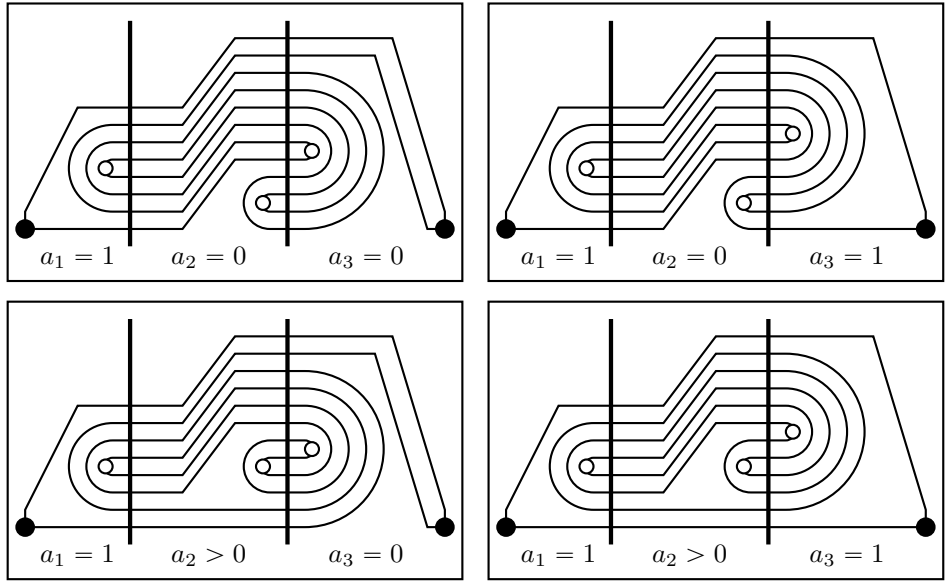
\begin{figure}[!ht]
\begin{center}
\begin{tikzpicture}[scale=0.23]
\draw[draw=black,ultra thick] (35,-1) -- (35,12);
\draw[draw=black,ultra thick] (44,-1) -- (44,12);

\draw[draw=black,fill=black,thick] (29,0) circle (0.5);
\draw[draw=black,fill=black,thick] (53,0) circle (0.5);

\draw[draw=black,thick] (29,0) -- (35,0);
\draw[draw=black,thick] (35,1) -- (34,1) arc (270:90:2.5) -- (35,6);
\draw[draw=black,thick] (35,2) -- (34,2) arc (270:90:1.5) -- (35,5);
\draw[draw=black,thick] (35,3) -- (34,3) arc (270:90:0.5) -- (35,4);

\draw[draw=black,thick] (44,0) -- (43,0) arc (270:90:1.5) -- (44,3);
\draw[draw=black,thick] (44,1) -- (43,1) arc (270:90:0.5) -- (44,2);
\draw[draw=black,thick] (35,0) -- (38,0) -- (41,4) -- (44,4);
\draw[draw=black,thick] (35,1) -- (38,1) -- (41,5) -- (44,5);
\draw[draw=black,thick] (35,2) -- (38,2) -- (41,6) -- (44,6);
\draw[draw=black,thick] (35,3) -- (38,3) -- (41,7) -- (44,7);
\draw[draw=black,thick] (35,4) -- (38,4) -- (41,8) -- (44,8);
\draw[draw=black,thick] (35,5) -- (38,5) -- (41,9) -- (44,9);
\draw[draw=black,thick] (35,6) -- (38,6) -- (41,10) -- (44,10);

\draw[draw=black,thick] (44,0) -- (45,0) arc (-90:90:4.5) -- (44,9);
\draw[draw=black,thick] (44,1) -- (45,1) arc (-90:90:3.5) -- (44,8);
\draw[draw=black,thick] (44,2) -- (45,2) arc (-90:90:2.5) -- (44,7);
\draw[draw=black,thick] (44,3) -- (45,3) arc (-90:90:1.5) -- (44,6);
\draw[draw=black,thick] (44,4) -- (45,4) arc (-90:90:0.5) -- (44,5);
\draw[draw=black,thick] (44,10) -- (49,10) -- (52,0) -- (53,0);

\draw[draw=black,thick] (29,0) -- (29,1) -- (32,7) -- (38,7) -- (41,11) -- (50,11) -- (53,1) -- (53,0);

\draw[draw=black,fill=white,thick] (33.6,3.5) circle (0.4);
\draw[draw=black,fill=white,thick] (42.6,1.5) circle (0.4);
\draw[draw=black,fill=white,thick] (45.4,4.5) circle (0.4);

\node[anchor=north] at (32,-0.5) {$a_1 = 1$};
\node[anchor=north] at (39.5,-0.5) {$a_2 = 0$};
\node[anchor=north] at (48.5,-0.5) {$a_3 = 0$};

\draw[draw=black,thick] (28,-3) -- (54,-3) -- (54,13) -- (28,13) -- cycle;
\end{tikzpicture}
{\tiny~}
\begin{tikzpicture}[scale=0.23]
\draw[draw=black,ultra thick] (35,-1) -- (35,12);
\draw[draw=black,ultra thick] (44,-1) -- (44,12);

\draw[draw=black,fill=black,thick] (29,0) circle (0.5);
\draw[draw=black,fill=black,thick] (53,0) circle (0.5);

\draw[draw=black,thick] (29,0) -- (35,0);
\draw[draw=black,thick] (35,1) -- (34,1) arc (270:90:2.5) -- (35,6);
\draw[draw=black,thick] (35,2) -- (34,2) arc (270:90:1.5) -- (35,5);
\draw[draw=black,thick] (35,3) -- (34,3) arc (270:90:0.5) -- (35,4);

\draw[draw=black,thick] (44,0) -- (43,0) arc (270:90:1.5) -- (44,3);
\draw[draw=black,thick] (44,1) -- (43,1) arc (270:90:0.5) -- (44,2);
\draw[draw=black,thick] (35,0) -- (38,0) -- (41,4) -- (44,4);
\draw[draw=black,thick] (35,1) -- (38,1) -- (41,5) -- (44,5);
\draw[draw=black,thick] (35,2) -- (38,2) -- (41,6) -- (44,6);
\draw[draw=black,thick] (35,3) -- (38,3) -- (41,7) -- (44,7);
\draw[draw=black,thick] (35,4) -- (38,4) -- (41,8) -- (44,8);
\draw[draw=black,thick] (35,5) -- (38,5) -- (41,9) -- (44,9);
\draw[draw=black,thick] (35,6) -- (38,6) -- (41,10) -- (44,10);

\draw[draw=black,thick] (44,1) -- (45,1) arc (-90:90:4.5) -- (44,10);
\draw[draw=black,thick] (44,2) -- (45,2) arc (-90:90:3.5) -- (44,9);
\draw[draw=black,thick] (44,3) -- (45,3) arc (-90:90:2.5) -- (44,8);
\draw[draw=black,thick] (44,4) -- (45,4) arc (-90:90:1.5) -- (44,7);
\draw[draw=black,thick] (44,5) -- (45,5) arc (-90:90:0.5) -- (44,6);
\draw[draw=black,thick] (44,0) -- (53,0);

\draw[draw=black,thick] (29,0) -- (29,1) -- (32,7) -- (38,7) -- (41,11) -- (50,11) -- (53,1) -- (53,0);

\draw[draw=black,fill=white,thick] (33.6,3.5) circle (0.4);
\draw[draw=black,fill=white,thick] (42.6,1.5) circle (0.4);
\draw[draw=black,fill=white,thick] (45.4,5.5) circle (0.4);

\node[anchor=north] at (32,-0.5) {$a_1 = 1$};
\node[anchor=north] at (39.5,-0.5) {$a_2 = 0$};
\node[anchor=north] at (48.5,-0.5) {$a_3 = 1$};

\draw[draw=black,thick] (28,-3) -- (54,-3) -- (54,13) -- (28,13) -- cycle;
\end{tikzpicture}

\medskip

\begin{tikzpicture}[scale=0.23]
\draw[draw=black,ultra thick] (5,-1) -- (5,12);
\draw[draw=black,ultra thick] (14,-1) -- (14,12);

\draw[draw=black,fill=black,thick] (-1,0) circle (0.5);
\draw[draw=black,fill=black,thick] (23,0) circle (0.5);

\draw[draw=black,thick] (-1,0) -- (5,0);
\draw[draw=black,thick] (5,1) -- (4,1) arc (270:90:2.5) -- (5,6);
\draw[draw=black,thick] (5,2) -- (4,2) arc (270:90:1.5) -- (5,5);
\draw[draw=black,thick] (5,3) -- (4,3) arc (270:90:0.5) -- (5,4);

\draw[draw=black,thick] (5,0) -- (14,0);
\draw[draw=black,thick] (5,1) -- (14,1);
\draw[draw=black,thick] (14,2) -- (13,2) arc (270:90:1.5) -- (14,5);
\draw[draw=black,thick] (14,3) -- (13,3) arc (270:90:0.5) -- (14,4);
\draw[draw=black,thick] (5,2) -- (8,2) -- (11,6) -- (14,6);
\draw[draw=black,thick] (5,3) -- (8,3) -- (11,7) -- (14,7);
\draw[draw=black,thick] (5,4) -- (8,4) -- (11,8) -- (14,8);
\draw[draw=black,thick] (5,5) -- (8,5) -- (11,9) -- (14,9);
\draw[draw=black,thick] (5,6) -- (8,6) -- (11,10) -- (14,10);

\draw[draw=black,thick] (14,0) -- (15,0) arc (-90:90:4.5) -- (14,9);
\draw[draw=black,thick] (14,1) -- (15,1) arc (-90:90:3.5) -- (14,8);
\draw[draw=black,thick] (14,2) -- (15,2) arc (-90:90:2.5) -- (14,7);
\draw[draw=black,thick] (14,3) -- (15,3) arc (-90:90:1.5) -- (14,6);
\draw[draw=black,thick] (14,4) -- (15,4) arc (-90:90:0.5) -- (14,5);
\draw[draw=black,thick] (14,10) -- (19,10) -- (22,0) -- (23,0);

\draw[draw=black,thick] (-1,0) -- (-1,1) -- (2,7) -- (8,7) -- (11,11) -- (20,11) -- (23,1) -- (23,0);

\draw[draw=black,fill=white,thick] (3.6,3.5) circle (0.4);
\draw[draw=black,fill=white,thick] (12.6,3.5) circle (0.4);
\draw[draw=black,fill=white,thick] (15.4,4.5) circle (0.4);

\node[anchor=north] at (2,-0.5) {$a_1 = 1$};
\node[anchor=north] at (9.5,-0.5) {$a_2 > 0$};
\node[anchor=north] at (18.5,-0.5) {$a_3 = 0$};

\draw[draw=black,thick] (-2,-3) -- (24,-3) -- (24,13) -- (-2,13) -- cycle;
\end{tikzpicture}
{\tiny~}
\begin{tikzpicture}[scale=0.23]
\draw[draw=black,ultra thick] (5,-1) -- (5,12);
\draw[draw=black,ultra thick] (14,-1) -- (14,12);

\draw[draw=black,fill=black,thick] (-1,0) circle (0.5);
\draw[draw=black,fill=black,thick] (23,0) circle (0.5);

\draw[draw=black,thick] (-1,0) -- (5,0);
\draw[draw=black,thick] (5,1) -- (4,1) arc (270:90:2.5) -- (5,6);
\draw[draw=black,thick] (5,2) -- (4,2) arc (270:90:1.5) -- (5,5);
\draw[draw=black,thick] (5,3) -- (4,3) arc (270:90:0.5) -- (5,4);

\draw[draw=black,thick] (5,0) -- (14,0);
\draw[draw=black,thick] (5,1) -- (14,1);
\draw[draw=black,thick] (14,2) -- (13,2) arc (270:90:1.5) -- (14,5);
\draw[draw=black,thick] (14,3) -- (13,3) arc (270:90:0.5) -- (14,4);
\draw[draw=black,thick] (5,2) -- (8,2) -- (11,6) -- (14,6);
\draw[draw=black,thick] (5,3) -- (8,3) -- (11,7) -- (14,7);
\draw[draw=black,thick] (5,4) -- (8,4) -- (11,8) -- (14,8);
\draw[draw=black,thick] (5,5) -- (8,5) -- (11,9) -- (14,9);
\draw[draw=black,thick] (5,6) -- (8,6) -- (11,10) -- (14,10);

\draw[draw=black,thick] (14,1) -- (15,1) arc (-90:90:4.5) -- (14,10);
\draw[draw=black,thick] (14,2) -- (15,2) arc (-90:90:3.5) -- (14,9);
\draw[draw=black,thick] (14,3) -- (15,3) arc (-90:90:2.5) -- (14,8);
\draw[draw=black,thick] (14,4) -- (15,4) arc (-90:90:1.5) -- (14,7);
\draw[draw=black,thick] (14,5) -- (15,5) arc (-90:90:0.5) -- (14,6);
\draw[draw=black,thick] (14,0) -- (23,0);

\draw[draw=black,thick] (-1,0) -- (-1,1) -- (2,7) -- (8,7) -- (11,11) -- (20,11) -- (23,1) -- (23,0);

\draw[draw=black,fill=white,thick] (3.6,3.5) circle (0.4);
\draw[draw=black,fill=white,thick] (12.6,3.5) circle (0.4);
\draw[draw=black,fill=white,thick] (15.4,5.5) circle (0.4);

\node[anchor=north] at (2,-0.5) {$a_1 = 1$};
\node[anchor=north] at (9.5,-0.5) {$a_2 > 0$};
\node[anchor=north] at (18.5,-0.5) {$a_3 = 1$};

\draw[draw=black,thick] (-2,-3) -- (24,-3) -- (24,13) -- (-2,13) -- cycle;
\end{tikzpicture}
\end{center}
\caption{Four different cases: $a_1 = 1$, $a_2 \stackrel{?}{=} 0$ and $a_3 \stackrel{?}{=} 0$}
\label{fig:a2-a3-four-cases}
\end{figure}

Hence, each equivalence class of the relation $\equiv$
contains points of the type $\conc{2}{m}$, as illustrated by Figure~\ref{fig:a2-a3-four-cases}.

We then define additional relations on the set $\{1,\ldots,2\ell+2\}$.
Let $u$ and $v$ be elements of $\{0,\ldots,2\ell+1\}$.
We write $u \stackrel{\sqsubset}{\sim} v$ if some
connected component of $\calD \setminus \bL_2$ has endpoints $\conc{2}{u}$ and $\conc{2}{v}$
and lies to the left of $\bL_2$ (i.e. in the area $\calZ_1 \cup \calZ_2$).
Similarly, we write $u \stackrel{\sqsupset}{\sim} v$ if some
connected component of $\calD \setminus \bL_2$ has endpoints $\conc{2}{u}$ and $\conc{2}{v}$
and lies to the right of $\bL_2$ (i.e. in the area $\calZ_3$).

Alternatively, one might define the relations $\stackrel{\sqsubset}{\sim}$ and $\stackrel{\sqsupset}{\sim}$ by saying that
$u \stackrel{\sqsubset}{\sim} v$ whenever 
$\conc{2}{u} \stackrel{2}{\sim} \conc{2}{v}$,
$\conc{2}{u} \stackrel{2}{\sim} \conc{1}{w} \stackrel{1}{\sim} \conc{1}{x} \stackrel{2}{\sim} \conc{2}{v}$ or
$\conc{2}{u} \stackrel{2}{\sim} \conc{1}{w} \stackrel{1}{\sim} \conc{0}{1} \stackrel{1}{\sim} \conc{1}{x} \stackrel{2}{\sim} \conc{2}{v}$
for some $w$, $x$,
and that $u \stackrel{\sqsupset}{\sim} v$ whenever
$\conc{2}{u} \stackrel{3}{\sim} \conc{2}{v}$ or $\conc{2}{u} \stackrel{3}{\sim} \conc{3}{1} \stackrel{3}{\sim} \conc{2}{v}$.

One checks easily that, whenever $u \stackrel{\sqsubset}{\sim} v$ or $u \stackrel{\sqsupset}{\sim} v$,
the integers $u$ and $v$ have different parities.
Hence, consider the permutation $\theta$ of $\{0,\ldots,\ell\}$ such that
$\theta(u) = v$ if and only if there exists some (even) integer $w \in \{1,\ldots,2\ell+2\}$ such that
$2u+1 \stackrel{\sqsubset}{\sim} w \stackrel{\sqsupset}{\sim} 2v+1$.
By construction, there is a bijection between the equivalence classes of the relation $\equiv$ and
the orbits of $\theta$, as follows:
we identify the equivalence class $\calC$ of $\equiv$ with the orbit $\{u : \conc{2}{2u+1} \in \calC\}$ of $\theta$.

\begin{figure}[!ht]
\begin{center}
\begin{tikzpicture}[scale=0.22]
\draw[draw=black,thick] (58,0) -- (58,1) -- (61,7) -- (67,7) -- (70,11) -- (79,11) -- (82,1) -- (82,0);

\draw[draw=black,ultra thick] (64,-1) -- (64,12);
\draw[draw=black,ultra thick] (73,-1) -- (73,12);

\draw[draw=black,fill=black,thick] (58,0) circle (0.5);
\draw[draw=black,fill=black,thick] (82,0) circle (0.5);

\draw[draw=black,thick] (58,0) -- (64,0);
\draw[draw=black,thick] (64,1) -- (63,1) arc (270:90:2.5) -- (64,6);
\draw[draw=black,thick] (64,2) -- (63,2) arc (270:90:1.5) -- (64,5);
\draw[draw=black,thick] (64,3) -- (63,3) arc (270:90:0.5) -- (64,4);

\draw[draw=black,thick] (73,0) -- (72,0) arc (270:90:1.5) -- (73,3);
\draw[draw=black,thick] (73,1) -- (72,1) arc (270:90:0.5) -- (73,2);
\draw[draw=black,thick] (64,0) -- (67,0) -- (70,4) -- (73,4);
\draw[draw=black,thick] (64,1) -- (67,1) -- (70,5) -- (73,5);
\draw[draw=black,thick] (64,2) -- (67,2) -- (70,6) -- (73,6);
\draw[draw=black,thick] (64,3) -- (67,3) -- (70,7) -- (73,7);
\draw[draw=black,thick] (64,4) -- (67,4) -- (70,8) -- (73,8);
\draw[draw=black,thick] (64,5) -- (67,5) -- (70,9) -- (73,9);
\draw[draw=black,thick] (64,6) -- (67,6) -- (70,10) -- (73,10);

\draw[draw=black,thick] (73,0) -- (74,0) arc (-90:90:4.5) -- (73,9);
\draw[draw=black,thick] (73,1) -- (74,1) arc (-90:90:3.5) -- (73,8);
\draw[draw=black,thick] (73,2) -- (74,2) arc (-90:90:2.5) -- (73,7);
\draw[draw=black,thick] (73,3) -- (74,3) arc (-90:90:1.5) -- (73,6);
\draw[draw=black,thick] (73,4) -- (74,4) arc (-90:90:0.5) -- (73,5);
\draw[draw=black,thick] (73,10) -- (78,10) -- (81,0) -- (82,0);

\draw[draw=black,fill=white,thick] (62.6,3.5) circle (0.4);
\draw[draw=black,fill=white,thick] (71.6,1.5) circle (0.4);
\draw[draw=black,fill=white,thick] (74.4,4.5) circle (0.4);

\node[anchor=west] at (82.5,10) {$1 \stackrel{\sqsubset}{\sim} 4$, $2 \stackrel{\sqsubset}{\sim} 3$, $5 \stackrel{\sqsubset}{\sim} 12$,
$6 \stackrel{\sqsubset}{\sim} 11$, $7 \stackrel{\sqsubset}{\sim} 10$, $8 \stackrel{\sqsubset}{\sim} 9$};
\node[anchor=west] at (82.5,7) {$1 \stackrel{\sqsupset}{\sim} 10$, $2 \stackrel{\sqsupset}{\sim} 9$, $3 \stackrel{\sqsupset}{\sim} 8$,
$4 \stackrel{\sqsupset}{\sim} 7$, $5 \stackrel{\sqsupset}{\sim} 6$, $11 \stackrel{\sqsupset}{\sim} 12$};
\node[anchor=west] at (82.5,4) {$0 \stackrel{\theta}\rightarrow 3 \stackrel{\theta}\rightarrow 0$,
$1 \stackrel{\theta}\rightarrow 4 \stackrel{\theta}\rightarrow 1$,
$2 \stackrel{\theta}\rightarrow 5 \stackrel{\theta}\rightarrow 2$};
\end{tikzpicture}
\end{center}
\caption{Relations $\stackrel{\sqsubset}{\sim}$ and $\stackrel{\sqsupset}{\sim}$, and permutation $\theta$ on a $3$-generalised diagram}
\label{fig:new-relations}
\end{figure}

Hence, $\bs\ba$ are actual coordinates if and only if $\theta$ is a cyclic permutation of $\{0,\ldots,\ell\}$.
For the ease of the computation, we identify below
the set $\{0,\ldots,\ell\}$ with the set $\ZZ_{\ell+1} := \ZZ/(\ell+1)\ZZ$.

\subsubsection{Proof of Theorem~\ref{thm:G-and-L3} -- Step 3: Which Permutations are Cyclic?}
\label{subsubsection:proof-2b}

Let us define the real number $\alpha := \frac{a_2}{2}$.
Note that $\alpha$ is not necessarily an integer, and that $a_2 = \lfloor\alpha\rfloor + \lceil\alpha\rceil$.
In addition, recall that we defined above the integer $m := \ell-k$, such that $m > 0$.
We consider separately various cases.

\noindent$\rhd$ If $a_2 > 0$ and $a_3 = 1$, then $0 \stackrel{\theta}{\rightarrow} 0$, as shown in Fig.~\ref{fig:a2-a3-four-cases}
(bottom-right case).
It follows that $\theta$ is not a cyclic permutation of the set $\mathbb{Z}_{\ell+1}$.

\medskip

\noindent$\rhd$ If $a_2 = 0$, then one checks easily, as shown in Fig.~\ref{fig:a2-a3-four-cases} (top cases), that
\begin{enumerate}[a.]
\item if $0 \leq u < m$, then $2u+1 \stackrel{\sqsubset}{\sim} 2(m-u) \stackrel{\sqsupset}{\sim} 2u+1+2(k+a_3)$;
\item if $u = m$, then $2u+1 \stackrel{\sqsubset}{\sim} 2\ell+2 \stackrel{\sqsupset}{\sim} \mathbf{1}_{a_3 = 0} \cdot 2 \ell +1$;
\item if $m < u \leq \ell$, then $2u+1 \stackrel{\sqsubset}{\sim} 2(\ell+1+m-u) \stackrel{\sqsupset}{\sim} 2u+1-2(m+1-a_3)$.
\end{enumerate}
It follows that $u \stackrel{\theta}{\rightarrow} u+(k+a_3)$ for all $u \in \mathbb{Z}_{\ell+1}$.

\begin{figure}[!ht]
\begin{center}
\begin{tikzpicture}[scale=0.35]
\draw[draw=black,ultra thick] (5,-1) -- (5,12);
\draw[draw=black,ultra thick] (14,-1) -- (14,12);

\draw[draw=black,fill=black,thick] (-1,0) circle (0.5);
\draw[draw=black,fill=black,thick] (23,0) circle (0.5);

\draw[draw=black,thick,>=stealth,->] (14,0) -- (-1,0) -- (-1,1) -- (2,7) -- (8,7) -- (11,11) -- (20,11) -- (23,1) -- (23,0) -- (22,0) -- (19,10) -- (14.5,10);
\draw[draw=black,thick,>=stealth,->] (14,2) -- (4,2) arc (270:90:1.5) -- (8,5) -- (11,9) -- (15,9) arc (90:-90:4.5) -- (14.5,0);
\draw[draw=black,thick,>=stealth,->] (14,4) -- (13,4) arc (270:90:0.5) -- (15,5) arc (90:-90:0.5) -- (14.5,4);
\draw[draw=black,thick,>=stealth,->] (14,6) -- (13,6) arc (90:270:1.5) -- (15,3) arc (-90:90:1.5) -- (14.5,6);
\draw[draw=black,thick,>=stealth,->] (14,8) -- (11,8) -- (8,4) -- (4,4) arc (90:270:0.5) -- (8,3) -- (11,7) -- (15,7) arc (90:-90:2.5) -- (14.5,2);
\draw[draw=black,thick,>=stealth,->] (14,10) -- (11,10) -- (8,6) -- (4,6) arc (90:270:2.5) -- (15,1) arc (-90:90:3.5) -- (14.5,8);

\draw[draw=black,fill=white,thick] (3.6,3.5) circle (0.4);
\draw[draw=black,fill=white,thick] (12.6,4.5) circle (0.4);
\draw[draw=black,fill=white,thick] (15.4,4.5) circle (0.4);

\draw[draw=black,fill=white,thick] (14.5,-0.5) -- (14.5,0.5) -- (13.5,0.5) -- (13.5,-0.5) -- cycle;
\draw[draw=black,fill=white,thick] (14.5,1.5) -- (14.5,2.5) -- (13.5,2.5) -- (13.5,1.5) -- cycle;
\draw[draw=black,fill=white,thick] (14.5,3.5) -- (14.5,4.5) -- (13.5,4.5) -- (13.5,3.5) -- cycle;
\draw[draw=black,fill=white,thick] (14.5,5.5) -- (14.5,6.5) -- (13.5,6.5) -- (13.5,5.5) -- cycle;
\draw[draw=black,fill=white,thick] (14.5,7.5) -- (14.5,8.5) -- (13.5,8.5) -- (13.5,7.5) -- cycle;
\draw[draw=black,fill=white,thick] (14.5,9.5) -- (14.5,10.5) -- (13.5,10.5) -- (13.5,9.5) -- cycle;
\node at (14,0) {a.};
\node at (14,2) {b.};
\node at (14,4) {c.};
\node at (14,6) {c.};
\node at (14,8) {d.};
\node at (14,10) {e.};

\node[anchor=west] at (23.5,9.5) {Case $k+1 \geq a_2$:};
\node[anchor=west] at (25,8) {$\bullet$};
\node[anchor=west] at (25,6.5) {$\bullet$};
\node[anchor=west] at (25,5) {$\bullet$};
\node[anchor=west] at (25,3.5) {$\bullet$};
\node[anchor=west] at (25,2) {$\bullet$};
\node[anchor=west] at (26,8) {$k = 3$};
\node[anchor=west] at (26,6.5) {$\ell = 5$};
\node[anchor=west] at (26,5) {$m = 2$};
\node[anchor=west] at (26,3.5) {$a_2 = 3$};
\node[anchor=west] at (26,2) {$\alpha = \frac{3}{2}$};

\draw[draw=black,thick] (-2,-2) -- (32,-2) -- (32,13) -- (-2,13) -- cycle;
\end{tikzpicture}

\medskip

\begin{tikzpicture}[scale=0.35]
\draw[draw=black,ultra thick] (5,-1) -- (5,12);
\draw[draw=black,ultra thick] (14,-1) -- (14,12);

\draw[draw=black,fill=black,thick] (-1,0) circle (0.5);
\draw[draw=black,fill=black,thick] (23,0) circle (0.5);

\draw[draw=black,thick,>=stealth,->] (14,0) -- (-1,0) -- (-1,1) -- (2,7) -- (8,7) -- (11,11) -- (20,11) -- (23,1) -- (23,0) -- (22,0) -- (19,10) -- (14.5,10);
\draw[draw=black,thick,>=stealth,->] (14,2) -- (4,2) arc (270:90:1.5) -- (8,5) -- (11,9) -- (15,9) arc (90:-90:4.5) -- (14.5,0);
\draw[draw=black,thick,>=stealth,->] (14,4) -- (4,4) arc (90:270:0.5) -- (15,3) arc (-90:90:1.5) -- (14.5,6);
\draw[draw=black,thick,>=stealth,->] (14,6) -- (13,6) arc (270:90:0.5) -- (15,7) arc (90:-90:2.5) -- (14.5,2);
\draw[draw=black,thick,>=stealth,->] (14,8) -- (13,8) arc (90:270:1.5) -- (15,5) arc (90:-90:0.5) -- (14.5,4);
\draw[draw=black,thick,>=stealth,->] (14,10) -- (11,10) -- (8,6) -- (4,6) arc (90:270:2.5) -- (15,1) arc (-90:90:3.5) -- (14.5,8);

\draw[draw=black,fill=white,thick] (3.6,3.5) circle (0.4);
\draw[draw=black,fill=white,thick] (12.6,6.5) circle (0.4);
\draw[draw=black,fill=white,thick] (15.4,4.5) circle (0.4);

\draw[draw=black,fill=white,thick] (14.5,-0.5) -- (14.5,0.5) -- (13.5,0.5) -- (13.5,-0.5) -- cycle;
\draw[draw=black,fill=white,thick] (14.5,1.5) -- (14.5,2.5) -- (13.5,2.5) -- (13.5,1.5) -- cycle;
\draw[draw=black,fill=white,thick] (14.5,3.5) -- (14.5,4.5) -- (13.5,4.5) -- (13.5,3.5) -- cycle;
\draw[draw=black,fill=white,thick] (14.5,5.5) -- (14.5,6.5) -- (13.5,6.5) -- (13.5,5.5) -- cycle;
\draw[draw=black,fill=white,thick] (14.5,7.5) -- (14.5,8.5) -- (13.5,8.5) -- (13.5,7.5) -- cycle;
\draw[draw=black,fill=white,thick] (14.5,9.5) -- (14.5,10.5) -- (13.5,10.5) -- (13.5,9.5) -- cycle;
\node at (14,0) {a.};
\node at (14,2) {b.};
\node at (14,4) {c.};
\node at (14,6) {d.};
\node at (14,8) {d.};
\node at (14,10) {e.};

\node[anchor=west] at (23.5,9.5) {Case $a_2 > k+1$:};
\node[anchor=west] at (25,8) {$\bullet$};
\node[anchor=west] at (25,6.5) {$\bullet$};
\node[anchor=west] at (25,5) {$\bullet$};
\node[anchor=west] at (25,3.5) {$\bullet$};
\node[anchor=west] at (25,2) {$\bullet$};
\node[anchor=west] at (26,8) {$k = 3$};
\node[anchor=west] at (26,6.5) {$\ell = 5$};
\node[anchor=west] at (26,5) {$m = 1$};
\node[anchor=west] at (26,3.5) {$a_2 = 5$};
\node[anchor=west] at (26,2) {$\alpha = \frac{5}{2}$};

\draw[draw=black,thick] (-2,-2) -- (32,-2) -- (32,13) -- (-2,13) -- cycle;
\end{tikzpicture}
\end{center}
\caption{Case $a_2 > 0$ and $a_3 = 0$: $k+1 \geq a_2$ and $a_2 > k+1$}
\label{fig:a2>a3=0}
\end{figure}
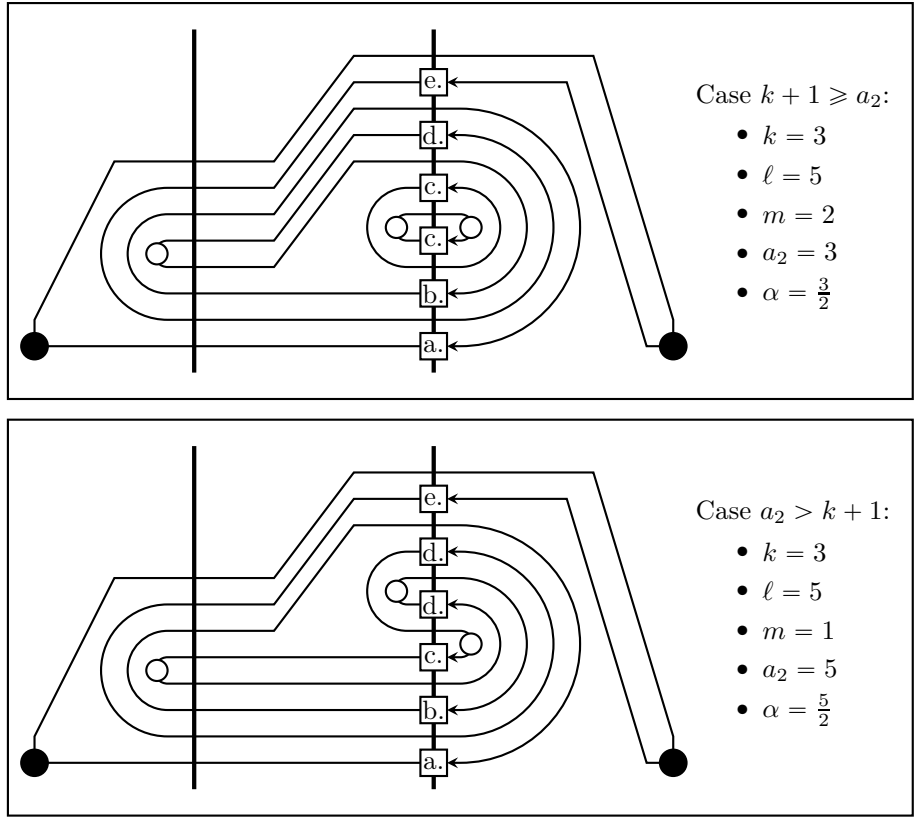

\noindent$\rhd$ If $k+1 \geq a_2 > a_3 = 0$, then one checks, as shown in Fig.~\ref{fig:a2>a3=0} (top case), that
\begin{enumerate}[a.]
\item if $u = 0$, then $2u+1 \stackrel{\sqsubset}{\sim} 2\ell+2 \stackrel{\sqsupset}{\sim} 2\ell+1$;
\item if $1 \leq u < \alpha$, then $2u+1 \stackrel{\sqsubset}{\sim} 2(\ell+1-u) \stackrel{\sqsupset}{\sim} 2u-1$;
\item if $\alpha \leq u < m+\alpha$, then $2u+1 \stackrel{\sqsubset}{\sim} 2(m+a_2-u) \stackrel{\sqsupset}{\sim} 2u+1+2(k-a_2)$;
\item if $m+\alpha \leq u < \ell+1-\alpha$, then $2u+1 \stackrel{\sqsubset}{\sim} 2(\ell+1+m-u) \stackrel{\sqsupset}{\sim} 2u+1-2(m+1)$;
\item if $\ell+1-\alpha \leq u \leq \ell$, then $2u+1 \stackrel{\sqsubset}{\sim} 2(\ell+1-u) \stackrel{\sqsupset}{\sim} 2u-1$.
\end{enumerate}
It follows that
\begin{itemize}
\item $u \rightarrow u-1$ if $0 \leq u < \alpha$ or if $\ell+1-\alpha \leq u \leq \ell$;
\item $u \rightarrow u+(k-a_2)$ if $\alpha \leq u < m+\alpha$;
\item $u \rightarrow u-(m+1)$ if $m+\alpha \leq u < \ell+1-\alpha$.
\end{itemize}

\medskip

\noindent$\rhd$ If $a_2 > k+1 > a_3 = 0$, then one checks, as shown in Fig.~\ref{fig:a2>a3=0} (bottom case), that
\begin{enumerate}[a.]
\item if $u = 0$, then $2u+1 \stackrel{\sqsubset}{\sim} 0 \stackrel{\sqsupset}{\sim} 2\ell+1$;
\item if $1 \leq u < k+1-\alpha$, then $2u+1 \stackrel{\sqsubset}{\sim} 2(\ell+1-u) \stackrel{\sqsupset}{\sim} 2u-1$;
\item if $k+1-\alpha \leq u < \alpha$, then $2u+1 \stackrel{\sqsubset}{\sim} 2(k+1-u) \stackrel{\sqsupset}{\sim} 2u+1+2(m-1)$;
\item if $\alpha \leq u < m+\alpha$, then $2u+1 \stackrel{\sqsubset}{\sim} 2(a_2+m-u) \stackrel{\sqsupset}{\sim} 2u+1-2(a_2-k)$;
\item if $m+\alpha \leq u \leq \ell$, then $2u+1 \stackrel{\sqsubset}{\sim} 2(\ell+1-u) \stackrel{\sqsupset}{\sim} 2u-1$.
\end{enumerate}
It follows that
\begin{itemize}
\item $u \rightarrow u-1$ if $0 \leq u < k+1-\alpha$ or if $m+\alpha \leq u \leq \ell$;
\item $u \rightarrow u+(m-1)$ if $k+1-\alpha \leq u < \alpha$;
\item $u \rightarrow u-(a_2-k)$ if $\alpha \leq u < m+\alpha$.
\end{itemize}

In both pictures of Fig.~\ref{fig:a2>a3=0}, on each point $\conc{2}{2u+1}$,
is written the subcase (from a. to e.) to which we should refer.
For instance, in the top picture, the label ``b.'' is written on the point $\conc{2}{3}$,
which is associated to the case $i = 1$, i.e. $1 \leq u < \alpha$.
Indeed, one checks that $\conc{2}{3} \stackrel{2}{\sim} \conc{1}{3} \stackrel{1}{\sim} \conc{1}{6} \stackrel{2}{\sim} \conc{2}{10} \stackrel{2}{\sim} \conc{2}{0}$,
which shows that $3 \stackrel{\sqsubset}{\sim} 10 \stackrel{\sqsupset}{\sim} 0$, as mentioned in the above enumeration of cases.

Overall, in each case, we observe that each permutation $\ell$ has a specific structure,
which we call \emph{translation} and \emph{translated cut}.

\begin{figure}[!ht]
\begin{center}
\begin{tikzpicture}[scale=0.245]

\draw[draw=black,pattern=crosshatch,pattern color=blackgray,ultra thick] (-12,0) -- (-6,0) -- (-6,2) -- (-12,2) -- cycle;
\draw[draw=black,pattern=north east lines,pattern color=blackgray,ultra thick] (-12,2) -- (-6,2) -- (-6,6) -- (-12,6) -- cycle;
\draw[draw=black,fill=gray,ultra thick] (-12,6) -- (-6,6) -- (-6,8) -- (-12,8) -- cycle;
\draw[draw=black,fill=verypalegray,ultra thick] (-12,8) -- (-6,8) -- (-6,11) -- (-12,11) -- cycle;
\draw[draw=black,pattern=north west lines,pattern color=blackgray,ultra thick] (-12,11) -- (-6,11) -- (-6,20) -- (-12,20) -- cycle;

\draw[draw=black,pattern=crosshatch,pattern color=blackgray,ultra thick] (0,0) -- (6,0) -- (6,2) -- (0,2) -- cycle;
\draw[draw=black,pattern=north east lines,pattern color=blackgray,ultra thick] (0,2) -- (6,2) -- (6,6) -- (0,6) -- cycle;
\draw[draw=black,fill=verypalegray,ultra thick] (0,6) -- (6,6) -- (6,9) -- (0,9) -- cycle;
\draw[draw=black,fill=gray,ultra thick] (0,9) -- (6,9) -- (6,11) -- (0,11) -- cycle;
\draw[draw=black,pattern=north west lines,pattern color=blackgray,ultra thick] (0,11) -- (6,11) -- (6,20) -- (0,20) -- cycle;

\draw[draw=black,pattern=north east lines,pattern color=blackgray,ultra thick] (12,0) -- (18,0) -- (18,4) -- (12,4) -- cycle;
\draw[draw=black,fill=verypalegray,ultra thick] (12,4) -- (18,4) -- (18,7) -- (12,7) -- cycle;
\draw[draw=black,fill=gray,ultra thick] (12,7) -- (18,7) -- (18,9) -- (12,9) -- cycle;
\draw[draw=black,pattern=north west lines,pattern color=blackgray,ultra thick] (12,9) -- (18,9) -- (18,18) -- (12,18) -- cycle;
\draw[draw=black,pattern=crosshatch,pattern color=blackgray,ultra thick] (12,18) -- (18,18) -- (18,20) -- (12,20) -- cycle;

\draw[draw=black,->,>=stealth',very thick] (-5.5,1) -- (-0.5,1);
\draw[draw=black,->,>=stealth',very thick] (-5.5,4) -- (-0.5,4);
\draw[draw=black,->,>=stealth',very thick] (-5.5,7) -- (-4.5,7) -- (-2,10) -- (-0.5,10);
\draw[draw=black,->,>=stealth',very thick] (-5.5,9.5) -- (-4.5,9.5) -- (-2,7.5) -- (-0.5,7.5);
\draw[draw=black,->,>=stealth',very thick] (-5.5,15.5) -- (-0.5,15.5);

\draw[draw=black,->,>=stealth',very thick] (6.5,1) -- (7.5,1) -- (10,19) -- (11.5,19);
\draw[draw=black,->,>=stealth',very thick] (6.5,4) -- (7.5,4) -- (10,2) -- (11.5,2);
\draw[draw=black,->,>=stealth',very thick] (6.5,7.5) -- (7.5,7.5) -- (10,5.5) -- (11.5,5.5);
\draw[draw=black,->,>=stealth',very thick] (6.5,10) -- (7.5,10) -- (10,8) -- (11.5,8);
\draw[draw=black,->,>=stealth',very thick] (6.5,15.5) -- (7.5,15.5) -- (10,13.5) -- (11.5,13.5);

\node[anchor=south] at (-3,20) {$\bC^{\bu\bt}_{n,a,b,c}$};
\node[anchor=south] at (9,20) {$\bT_{n,1}$};

\draw[draw=black,<->,>=stealth',thick] (-13,0) -- (-13,2);
\draw[draw=black,<->,>=stealth',thick] (-13,2) -- (-13,6);
\draw[draw=black,<->,>=stealth',thick] (-13,6) -- (-13,8);
\draw[draw=black,<->,>=stealth',thick] (-13,8) -- (-13,11);
\draw[draw=black,<->,>=stealth',thick] (-13,11) -- (-13,20);
\draw[draw=black,<->,>=stealth',thick] (19,0) -- (19,4);
\draw[draw=black,<->,>=stealth',thick] (19,4) -- (19,7);
\draw[draw=black,<->,>=stealth',thick] (19,7) -- (19,9);
\draw[draw=black,<->,>=stealth',thick] (19,9) -- (19,18);
\draw[draw=black,<->,>=stealth',thick] (19,18) -- (19,20);

\node[anchor=south,rotate=90] at (-13,1) {\small $1$};
\node[anchor=south,rotate=90] at (-13,4) {\small $a-1$};
\node[anchor=south,rotate=90] at (-13,7) {\small $b$};
\node[anchor=south,rotate=90] at (-13,9.5) {\small $c$};
\node[anchor=south,rotate=90] at (-13,15.5) {\small $n-a-b-c$};
\node[anchor=north,rotate=90] at (19,2) {\small $a-1$};
\node[anchor=north,rotate=90] at (19,5.5) {\small $c$};
\node[anchor=north,rotate=90] at (19,8) {\small $b$};
\node[anchor=north,rotate=90] at (19,13.5) {\small $n-a-b-c$};
\node[anchor=north,rotate=90] at (19,19) {\small $1$};
\end{tikzpicture}
\end{center}
\caption{Translated cut $\bT\bC^{\bu\bt}_{n,a,b,c}$}
\label{fig:translated-cut}
\end{figure}

\begin{dfn}{Translation and translated cut}
Let $a$, $b$, $c$ and $n$ be non-negative integers such that $a+b+c \leq n$, and set $\ZZ_n := \ZZ/n\ZZ$.
We call \emph{translation}, and denote by $\bT_{n,a}$, the permutation of $\ZZ_n$ such that
$\bT_{n,a} : k \mapsto k-a$.
We call \emph{translated cut}, and denote by $\bT\bC^{\bu\bt}_{n,a,b,c}$, the permutation $\bT_{n,1} \circ \bC^{\bu\bt}_{n,a,b,c}$ of $\ZZ_n$, where
$\bC^{\bu\bt}_{n,a,b,c}$ is the permutation such that
\begin{eqnarray*}
\bC^{\bu\bt}_{n,a,b,c} : k & \mapsto & k \text{ if } k \in \{0,\ldots,a-1,a+b+c,\ldots,n-1\} \\
& & k+c \text{ if } k \in \{a,\ldots,a+b-1\} \\
& & k-b \text{ if } k \in \{a+b,\ldots,a+b+c-1\}.
\end{eqnarray*}
\end{dfn}

We proved above that
\begin{itemize}
\item if $a_2 = 0$, then $\theta$ is the translation $\bT_{\ell+1,m+1-a_3}$;
\item if $k+1 \geq a_2 > a_3 = 0$, then $\theta$ is the translated cut
$\bT\bC^{\bu\bt}_{\ell+1,\lceil\alpha\rceil,m,k+1-a_2}$;
\item if $a_2 > k+1 > a_3 = 0$, then $\theta$ is the translated cut
$\bT\bC^{\bu\bt}_{\ell+1,k+1-\lfloor\alpha\rfloor,a_2-k-1,m}$.
\end{itemize}
Hence, it remains to check which translations and translated cuts are cyclic permutations.
The first case is immediate, whereas the second one is not.
Both cases are expressed in terms of coprimality:
for all relative integers $a$ and $b$,
we denote by $a \wedge b$ the greatest common divisor of $a$ and $b$,
i.e. the (unique) non-negative integer $d$ such that $\{a x + b y : x,y \in \ZZ\} = d \ZZ$.
In particular, note that $a \wedge b = |a| \wedge |b|$ and $0 \wedge b = |b|$ for all integers $a, b \in \ZZ$.

\begin{lem}
Let $a$ and $n$ be integers such that $0 \leq a \leq n$.
The translation $\bT_{n,a} : \ZZ_n \mapsto \ZZ_n$ is cyclic if and only if $a \wedge n = 1$.
\label{lem:cut-cyclic}
\end{lem}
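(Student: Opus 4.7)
The plan is to reduce the problem to the size of a single orbit. Since $\bT_{n,a}$ acts on $\ZZ_n$ by translation, it commutes with every other translation, so all its orbits have the same size; consequently $\bT_{n,a}$ is a cyclic permutation if and only if the orbit of $0$ has size $n$. An immediate induction on $k$ shows that $\bT_{n,a}^k(0) = -ka \pmod n$, so the orbit of $0$ is the subgroup of $\ZZ_n$ generated by $-a$, which equals the subgroup generated by $a$.

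The conclusion then follows from the standard description of cyclic subgroups of $\ZZ_n$: the smallest positive integer $k$ such that $ka \equiv 0 \pmod n$ is $n/(a \wedge n)$, which one can see either via Bézout's identity or by writing $a = d a'$ and $n = d n'$ with $d = a \wedge n$ and $a' \wedge n' = 1$. Thus the orbit of $0$ has cardinality $n/(a \wedge n)$, and this equals $n$ precisely when $a \wedge n = 1$, which proves the lemma. There is no real obstacle here: it is a classical statement about cyclic groups, and the only mild edge case ($a = 0$, giving the identity, with $0 \wedge n = n \neq 1$ unless $n = 1$) is handled correctly by the formula.
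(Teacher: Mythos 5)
Your proof is correct. Note that the paper does not actually give a proof of this lemma: in the sentence preceding Lemma~\ref{lem:cut-cyclic} the author remarks that ``the first case is immediate, whereas the second one is not'' and supplies an argument only for Lemma~\ref{lem:sliding-cut-cyclic}, implicitly appealing to the present lemma in its last line. So there is no paper proof to compare against; your argument simply fills in the details that the paper considers routine. The route you take --- observe that $\bT_{n,a}$ commutes with every translation, hence all its orbits have the same cardinality, so cyclicity is equivalent to the orbit of $0$ being all of $\ZZ_n$, and that orbit is the subgroup $\langle a \rangle$ of order $n/(a \wedge n)$ --- is the standard one, and the edge case $a = 0$ is handled correctly by the convention $0 \wedge n = n$. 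One could phrase the ``all orbits are equal'' step even more directly by noting that the orbits are exactly the cosets of $\langle a \rangle$, but your conjugation argument is equally valid.
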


\begin{lem}
Let $a$, $b$, $c$ and $n$ be non-negative integers such that $a+b+c \leq n$.
The translated cut $\bT\bC^{\bu\bt}_{n,a,b,c} : \ZZ_n \mapsto \ZZ_n$ is cyclic if and only if $(c-1) \wedge (b+1) = 1$.
\label{lem:sliding-cut-cyclic}
\end{lem}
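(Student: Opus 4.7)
The plan is to analyze $\theta := \bT\bC^{\bu\bt}_{n,a,b,c}$ via its first-return map to the perturbed region $E := \{a, a+1, \ldots, a+b+c-1\}$, on which $\theta$ differs from the pure translation $\bT_{n,1}$. The degenerate cases $b = 0$ and $c = 0$ are disposed of immediately: in either case the cut $\bC^{\bu\bt}_{n,a,b,c}$ is the identity, so $\theta = \bT_{n,1}$ is cyclic, and $(c-1)\wedge(b+1) = 1$ automatically. From now on I assume $b, c \geq 1$.

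First I would read off the three cases of $\theta$ to pinpoint the transitions between $E$ and $E^c$. Inside $E^c$ the map $\theta$ acts by $k \mapsto k-1$. The unique transition from $E$ to $E^c$ occurs at $k = a+b$, the leftmost point of $B$, with $\theta(a+b) = a-1 \in E^c$; and the unique transition back occurs at $k = a+b+c$, with $\theta(a+b+c) = a+b+c-1 \in E$. Since $E^c$ is a contiguous arc of the cycle $\ZZ_n$, a single excursion launched at $a+b$ traverses every point of $E^c$ in order before re-entering $E$ at $a+b+c-1$. It follows that every $\theta$-orbit either stays inside $E$ or contains $E^c$ in its entirety, which yields a bijection between the $\theta$-orbits on $\ZZ_n$ and the orbits of the first-return map $R : E \to E$. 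In particular, $\theta$ is cyclic on $\ZZ_n$ if and only if $R$ is cyclic on $E$.

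The next step is to identify $R$ explicitly. On $E \setminus \{a+b\}$ the map $R$ coincides with $\theta$, namely $k \mapsto k+c-1$ on $A$ and $k \mapsto k-b-1$ on $B \setminus \{a+b\}$, while the excursion yields $R(a+b) = a+b+c-1$. Identifying $E$ with $\ZZ_{b+c}$ via $k \mapsto k-a$, and using the congruence $c-1 \equiv -(b+1) \pmod{b+c}$, I would verify that the three cases collapse into the single formula $R(k) \equiv k + (c-1) \pmod{b+c}$; in other words, $R$ is the translation $\bT_{b+c,\,b+1}$ of $\ZZ_{b+c}$.

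By Lemma~\ref{lem:cut-cyclic}, this translation is cyclic on $\ZZ_{b+c}$ if and only if $(b+1)\wedge(b+c) = 1$; subtracting $b+1$ inside the gcd gives $(b+1)\wedge(c-1) = 1$, which is exactly the condition claimed. The step I expect to be the main obstacle is verifying that the piecewise formula for $R$ really does unify into a single translation: the boundary point $k = a+b$ has to be checked against the excursion value using the congruence above, and the sub-case $E^c = \emptyset$ (equivalently $n = b+c$), in which no genuine excursion occurs and $R = \theta$ directly, must be handled to confirm that the same translation formula still governs $R$.
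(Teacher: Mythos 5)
Your proposal is correct and takes essentially the same approach as the paper: the paper first conjugates by $\bT_{n,a}$ to normalize $a=0$, then contracts the full excursion $b \to n-1 \to \cdots \to b+c-1$ through $E^c$ to reduce to the case $n=b+c$, where it identifies the map as the translation $\bT_{b+c,b+1}$; your first-return-map formulation performs the same contraction without conjugating first, and arrives at the same translation. The only cosmetic difference is that you explicitly dispatch the degenerate cases $b=0$ or $c=0$, which the paper leaves implicit.
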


\begin{proof}
First, observe that $\bT_{n,a} \circ \bT\bC^{\bu\bt}_{n,a,b,c} = \bT\bC^{\bu\bt}_{n,0,b,c} \circ \bT_{n,a}$.
This means that the translated cuts $\bT\bC^{\bu\bt}_{n,a,b,c}$ and $\bT\bC^{\bu\bt}_{n,0,b,c}$ are conjugate to each other.
Therefore, the permutation $\bT\bC^{\bu\bt}_{n,a,b,c}$ is cyclic if and only if $\bT\bC^{\bu\bt}_{n,0,b,c}$ is cyclic too,
and we henceforth assume that $a = 0$.

Second, observe that $b \xmapsto{\bT\bC^{\bu\bt}_{n,0,b,c}} n-1 \xmapsto{\bT\bC^{\bu\bt}_{n,0,b,c}} n-2
\xmapsto{\bT\bC^{\bu\bt}_{n,0,b,c}} \ldots \xmapsto{\bT\bC^{\bu\bt}_{n,0,b,c}} b+c-1$.
Hence, the permutation $\bT\bC^{\bu\bt}_{n,0,b,c}$ is cyclic if and only if $\bT\bC^{\bu\bt}_{b+c,0,b,c}$ is cyclic too,
and we henceforth assume that $n = b+c$.

Third, observe that $\bT\bC^{\bu\bt}_{b+c,0,b,c}$ is simply the translation $\bT_{b+c,b+1}$.
Consequently, the permutation $\bT\bC^{\bu\bt}_{b+c,0,b,c}$ is cyclic if and only if $(b+c) \wedge (b+1) = 1$,
i.e. if and only if $(c-1) \wedge (b+1) = 1$.
This completes the proof.
\end{proof}

Remember that $\bs\ba$ are actual coordinates if and only if $\theta$ is a cyclic permutation of $\ZZ_{\ell+1}$.
Hence, Lemmas~\ref{lem:cut-cyclic} and~\ref{lem:sliding-cut-cyclic} prove that
$\bs\ba = (0,a_1,k,a_2,k+m,a_3,0)$ are actual coordinates if and only if we are in the following cases:
\begin{enumerate}[(i)]
\item $a_2 = 0$, $a_3 = 0$ and $k \wedge (m+1) = 1$;
\item $a_2 = 0$, $a_3 = 1$ and $(k+1) \wedge m = 1$;
\item $k+1 \geq a_2 \geq 1$, $a_3 = 0$ and $(k-a_2) \wedge (m+1) = 1$;
\item $2k+1 \geq a_2 \geq k+2$, $a_3 = 0$ and $(a_2-k) \wedge (m-1) = 1$.
\end{enumerate}

In particular, taking into account the virtual coordinates $\bs\ba$ such that $a_1 = 0$
(and whose case was tackled in the first few lines of Section~\ref{subsubsection:proof-2b}),
it follows that, whenever $k \geq 1$ and $m \geq 1$, we obtain the following formula
for the integers
\[C_{k,k+m} = C_{k+m,k} = |\{(a_1,a_2,a_3) : (0,a_1,k,a_2,k+m,a_3,0) \text{ are actual coordinates}\}|:\]
\begin{eqnarray*}
\frac{C_{k,k+m}}{2} & = & \mathbf{1}_{k \wedge (m+1) = 1} + \mathbf{1}_{(k+1) \wedge m = 1} + \\
& & \sum_{a_2=1}^{k+1} \mathbf{1}_{(k-a_2) \wedge (m+1) = 1} + \sum_{a_2=k+2}^{2k+1} \mathbf{1}_{(m-1) \wedge (a_2-k) = 1} \\
& = & \sum_{a=1}^k \mathbf{1}_{a \wedge (m+1) = 1} + \mathbf{1}_{(k+1) \wedge m = 1} + \sum_{a=1}^{k+1} \mathbf{1}_{a \wedge (m-1) = 1}.
\end{eqnarray*}

\subsubsection{Proof of Theorem~\ref{thm:G-and-L3} -- Step 4: Generating Functions}
\label{subsubsection:proof-3}

Focus now on the generating function $\calG_3(z) = \sum_{k \geq 0} g_{3,k}z^k$.
For the sake of clarity and conciseness, we only indicate the main steps of our computations,
which are mainly based on rearranging terms.

We proved in Section~\ref{subsubsection:proof-1} that
$C_{0,0} = 1$, that $C_{0,\ell} = C_{\ell,0} = 2$ for $\ell \geq 1$, and that $C_{k,k} = 2(2k+1)$ for $k \geq 1$.
It follows that
\[\calG_3(z) = \sum_{k,\ell \geq 0} C_{k,\ell} z^{k+\ell} = 1 + \sum_{\ell \geq 1} 4 z^\ell + \sum_{k \geq 1} 2(2k+1) z^{2k} + 2 \sum_{k \geq 1, m \geq 1} C_{k,k+m} z^{2k+m}.\]
Using the above formula for $C_{k,k+m}$ (when $k, m \geq 1$) and , we can rewrite this as
\[\calG_3(z) = 1 + \frac{4 z}{1-z} - \frac{2 z^2 (z^2-3)}{(1-z^2)^2} + 4 (H_1(z) + H_2(z) + H_3(z) - H_4(z)),\]
where
\begin{eqnarray*}
H_1(z) = \sum_{k \geq 1}\sum_{m \geq 1}\sum_{a=1}^k \mathbf{1}_{a \wedge (m+1) = 1}z^{2k+m}, & &
H_2(z) = \sum_{k \geq 1}\sum_{m \geq 1} \mathbf{1}_{(k+1) \wedge m = 1}z^{2k+m}, \\
H_3(z) = \sum_{k \geq 0}\sum_{m \geq 1}\sum_{a=1}^{k+1} \mathbf{1}_{a \wedge (m-1) = 1}z^{2k+m}, & &
\text{ and } H_4(z) = \sum_{m \geq 1} z^m.
\end{eqnarray*}

Then, let us define the function $F(z) := \sum_{\alpha \geq 1}\sum_{\beta \geq 1} \mathbf{1}_{\alpha \wedge \beta = 1}z^{2\alpha+\beta}$.
Using simple substitutions ($t := k-a$, $u := k+1$, $v := m+1$ and $w := m-1$), we get
\begin{eqnarray*}
H_1(z) & = & \sum_{a \geq 1}\sum_{m \geq 1}\sum_{t \geq 0} \mathbf{1}_{a \wedge (m+1) = 1}z^{2(a+t)+m} =
\frac{1}{1-z^2}\sum_{a \geq 1}\sum_{m \geq 1} \mathbf{1}_{a \wedge (m+1) = 1}z^{2a+m} \\
& = & \frac{1}{z(1-z^2)} \left(\sum_{a \geq 1}\sum_{v \geq 1} \mathbf{1}_{a \wedge v = 1}z^{2a+v} - \sum_{a \geq 1} z^{2a+1}\right) =
\frac{F(z)}{z(1-z^2)} - \frac{z^2}{(1-z^2)^2}, \\
H_2(z) & = & \frac{1}{z^2}\left(\sum_{u \geq 1}\sum_{m \geq 1} \mathbf{1}_{u \wedge m = 1}z^{2u+m} - \sum_{m \geq 1} z^{2+m}\right) =
\frac{F(z)}{z^2} - \frac{z}{1-z}, \\
H_3(z) & = & \sum_{a \geq 1}\sum_{m \geq 1}\sum_{t \geq -1} \mathbf{1}_{a \wedge (m-1) = 1}z^{2(a+t)+m} =
\frac{1}{z^2(1-z^2)}\sum_{a \geq 1}\sum_{m \geq 1} \mathbf{1}_{a \wedge (m-1) = 1}z^{2a+m} \\
& = & \frac{1}{z(1-z^2)} \left(\sum_{a \geq 1}\sum_{w \geq 1} \mathbf{1}_{a \wedge w = 1}z^{2a+w} + z^2\right) =
\frac{F(z)}{z(1-z^2)} + \frac{z}{1-z^2}, \text{ and} \\
H_4(z) & = & \frac{z}{1-z}.
\end{eqnarray*}

Moreover, consider the coefficients $f_n$ of the series $F(z)$.
Since $F(z) = \sum_{n \geq 3} f_n z^n$, we have
\begin{eqnarray*}
f_n & = & \left|\left\{a < \frac{n}{2} : a \wedge (n-2a) = 1\right\}\right| = \left|\left\{a < \frac{n}{2} : a \wedge n = 1\right\}\right| \\
& = & \frac{1}{2}\left|\left\{a \leq n : a \neq \frac{n}{2}, a \wedge n = 1\right\}\right|. 
\end{eqnarray*}
Observe that, if $n \geq 3$ is even, then $\frac{n}{2} \wedge n = \frac{n}{2} \neq 1$.
It follows that $f_n = \frac{\varphi(n)}{2}$, i.e. that $F(z) = \frac{1}{2} \sum_{n \geq 3} \varphi(n) z^n$,
where $\varphi$ denotes the Euler totient.
Collecting the above terms, we have
\begin{eqnarray*}
\calG_3(z) & = & 1 + \frac{4 z}{1-z} - \frac{2 z^2 (z^2-3)}{(1-z^2)^2} + 4 \left(\frac{2}{z(1-z^2)} + \frac{1}{z^2}\right) F(z) + \\
& & 4 \left(\frac{z}{1-z^2} - \frac{z^2}{(1-z^2)^2} - \frac{2z}{1-z}\right) \\
& = & 4 \frac{1+2z-z^2}{z^2(1-z^2)} F(z) + \frac{1-3z^2}{1-z^2},
\end{eqnarray*}
and since $\calB_3(z) = z^2 \calG_3(z^2)$, the two first parts of Theorem~\ref{thm:G-and-L3} are proved.

\bigskip

In addition, developing term-wise the series $\calG_3(z) = \sum_{k \geq 0}g_{3,k}z^k$ gives
\[\calG_3(z) = \left(\sum_{k \geq 0} z^{2k}\right) \left(2 (1+2z-z^2)\sum_{k \geq 1} \varphi(k+2) z^k + (1-3z^2)\right),\]
which proves that $g_{3,k} = \sum_{i=0}^{\lfloor k/2 \rfloor} \gamma_{k-2i}$, with
\[\gamma_i = \mathbf{1}_{i = 0} - 3 \cdot \mathbf{1}_{i = 2} + 2 \varphi(i+2) \mathbf{1}_{i \geq 1} + 4 \varphi(i+1) \mathbf{1}_{i \geq 2} -
2 \varphi(i) \mathbf{1}_{i \geq 3}.\]
It follows that
\[g_{3,k} = \mathbf{1}_{k = 0} + 2 \left(\varphi(k+2)-\mathbf{1}_{k \in 2\ZZ} + 2 \sum_{i=1}^{\lfloor k/2 \rfloor} \varphi(k+3-2i)\right) \mathbf{1}_{k \geq 1},\]
which proves the third part of Theorem~\ref{thm:G-and-L3}.

\subsubsection{Proof of Theorem~\ref{thm:G-and-L3} -- Step 5: Holonomy}
\label{subsubsection:holonomy}

Finally, we prove the last part of Theorem~\ref{thm:G-and-L3}, i.e. that the generating functions
\[\calG_3(z) = 2 \frac{1+2z-z^2}{z^2(1-z^2)} \left(\sum_{n \geq 3} \varphi(n) z^n\right) + \frac{-1+3z^2}{1-z^2}, ~ \calB_3(z) = z^2 \calG_3(z^2)\]
are not holonomic.

We do so by using standard tools and results of complex analysis (see~\cite[Annex B.4]{Flajolet:2009:AC:1506267})
and of analytic number theory (see~\cite{hardy-wright-introduction-to-the-theory-of-numbers}).

For the sake of contradiction, let us assume henceforth that $\calG_3(z)$ is holonomic.
Then, so is the generating function $\sum_{n \geq 1} \varphi(n) z^n$,
i.e. the sequence $(\varphi(n))_{n \geq 1}$ is $P$-recursive:
this means that there exists some complex polynomials $A_0(X), \ldots, A_k(X)$ with
such that $A_k \neq 0$ and $\sum_{i=0}^k A_i(n) \varphi(n+i) = 0$ for all integers $n \geq 1$.
In addition, since each term $\varphi(n)$ is a rational number,
we may even assume that $A_0(X), \ldots, A_k(X)$ have integer coefficients.

Let $d := \max\{\deg A_i : 0 \leq i \leq k\}$,
and let us write $A_i(X) = \sum_{j=0}^d a_{i,j} X^j$ for all $i \leq k$.
Then, consider some integer $\ell \in \{0,\ldots,k\}$ such that $\deg A_\ell = d$, i.e. $a_{\ell,d} \neq 0$,
and let us define the integer $a_\infty := \sum_{i=0}^k |a_{i,d}|$.

The Euler identity
\[\prod_{p \text{ prime}} \frac{1}{1-p^{-1}} = \prod_{p \text{ prime}} \left(\sum_{j \geq 0} p^{-j}\right) = \sum_{n \geq 1} n^{-1} = +\infty\]
shows that $\prod_{p \text{ prime}} (1-p^{-1}) = 0$.
Hence, there exists pairwise disjoint sets $P_0,\ldots,P_k$
of primes numbers greater than $k$ and
such that $\prod_{p \in P_i} (1-p^{-1}) \leq \frac{1}{2 a_\infty}$ for all $i \leq k$.
Consequently, the intergers $b_i = \prod_{p \in P_i}$ are pairwise coprime integers
such that $\varphi(b_i) = \prod_{p \in P_i} (p-1) \leq \frac{b_i}{2 a_\infty}$.

The Chinese remainder theorem~\cite[Theorem~59]{hardy-wright-introduction-to-the-theory-of-numbers}
shows that there exists an integer $N \geq 0$ such that
$N+i \equiv 0 \pmod{b_i}$ for all $i \neq \ell$.
Since the prime factors of $b_i$ are greater than $k$,
it follows that $N+\ell$ is coprime with $b_i$, for all $i \neq \ell$.
Hence, the Dirichlet theorem~\cite[Theorem~15]{hardy-wright-introduction-to-the-theory-of-numbers}
states that there exists arbitrarily large integers $n$ (in the set $\{N + z \prod_{i \neq \ell}b_i : z \in \NN\}$) such that
$n + \ell$ is prime.
For such integers $n$, remember that $n+i \equiv 0 \pmod{b_i}$ when $i \neq \ell$.
It follows that $\varphi(n+\ell) = n+\ell-1$ and $\varphi(n+i) \leq \frac{\varphi(b_i)}{b_i} (n+i) \leq \frac{n+i}{2 a_\infty}$.
Since $0 = \sum_{i=0}^k A_i(n) \varphi(n+i)$, we deduce that
\begin{eqnarray*}
(n-1) |A_\ell(n)| & \leq & |A_\ell(n) \varphi(n+\ell)| \leq \sum_{i \neq \ell} |A_i(n) \varphi(i+\ell)| \\
& \leq & \frac{n+k}{2 a_\infty} \sum_{i \neq \ell}|A_i(n)| \leq \frac{n+k}{2 a_\infty} \sum_{i=0}^k|A_i(n)|.
\end{eqnarray*}
When $n \to +\infty$, we have $|A_\ell(n)| \sim |a_{\ell,d}| n^d$ and $\sum_{i=0}^k|A_i(n)| \sim a_\infty n^d$,
from which we deduce that
\[|a_{\ell,d}| n^{d+1} \sim (n-1) |A_\ell(n)| \leq \frac{n+k}{2 a_\infty} \sum_{i=0}^k|A_i(n)| \sim \frac{1}{2} n^{d+1},\]
which is impossible since $|a_{\ell,d}| \geq 1$.
This contradiction shows that the generating function $\calG_3(z)$ could not be holonomic.

Finally, since $\calG_2(z) = z^{-1} \calB_3(z^{1/2})$ and since $z \mapsto z^{1/2}$ is algebraic,
the generating function $\calB_3(z)$ cannot be holonomic either.
This was the last step of the proof of Theorem~\ref{thm:G-and-L3},
which is now completed.

\subsection{Asymptotic Values in $B_3$}
\label{subsubsection:asymptotics}

We first use Theorem~\ref{thm:G-and-L3} to estimate precisely the terms $g_{3,k}$ of the series $\calG_3(z)$.

\begin{pro}
When $n \to +\infty$, we have: \[g_{3,k} \sim 4 \left(1+ \mathbf{1}_{k \in 2\ZZ}\right) \frac{k^2}{\pi^2}.\]
\label{pro:cut-euler-summatory-even-odd}
\end{pro}

\begin{proof}
For the sake of simplicity, let us introduce some notation.
We define $\alpha = \frac{4}{\pi^2}$ and $\phi_k = \sum_{i=0}^{\lfloor (k-1)/2 \rfloor} \varphi(k-2i)$,
as well as real numbers $\varepsilon_k$, $\theta_k$ and $\eta_k$ such that
$\phi_{2k} = (\alpha+\varepsilon_k)k^2$, $\phi_{2k-1} = (2\alpha + \theta_k)k^2$ and
$\eta_k = \varepsilon_k+\theta_k$.
We first want to prove that $\varepsilon_k \to 0$ and that $\theta_k \to 0$ when $k \to +\infty$.

It is a standard result that
\[\sum_{k=1}^n \varphi(k) \sim \frac{3}{\pi^2}n^2\]
when $n \to +\infty$ (see~\cite[Theorem~330]{hardy-wright-introduction-to-the-theory-of-numbers}).
It follows that
\[(3\alpha+\eta_k)k^2 = \phi_{2k} + \phi_{2k-1} = \sum_{i=1}^{2k}\varphi(i) \sim \frac{12}{\pi^2}k^2 = 3 \alpha k^2,\]
which means that $\eta_k \to 0$. Hence, it remains to prove that $\varepsilon_k \to 0$.

Then, let $A$ be some positive constant, and let $K$ be some positive integer such that
$\frac{\alpha}{(2k+1)^2} \leq A$ and $|\eta_k| \leq A$ whenever $k \geq K$.
In addition, for each integer $\ell \geq \log_2(K)$, we define $M_\ell = \max\{|\varepsilon_k| : 2^\ell \leq k \leq 2^{\ell+1}\}$.
If $2^\ell \leq k \leq 2^{\ell+1}$, then
\[\phi_{4k} = \sum_{i=0}^{k} \varphi(4i) + \sum_{i=0}^{k-1} \varphi(4i+2) = 2 \sum_{i=0}^k \varphi(2i) + \sum_{i=0}^{k-1} \varphi(2i+1) =
2\phi_{2k} + \phi_{2k-1},\] i.e.
$\varepsilon_{2k} = \frac{\varepsilon_k + \eta_k}{4}$.
It follows that \[|\varepsilon_{2k}| \leq \frac{M_\ell + A}{4} \leq 2A + \frac{3M_\ell}{4}.\]

Similarly, if $2^\ell \leq k < 2^{\ell+1}$, then
$\phi_{4k+2} = \phi_{4k} + \varphi(4k+2) = 2\phi_{2k}+\phi_{2k+1}$, i.e.
\[\varepsilon_{2k+1} = \frac{\alpha}{(2k+1)^2} + \frac{2k^2}{(2k+1)^2} \varepsilon_k -
\frac{(k+1)^2}{(2k+1)^2} \varepsilon_{k+1} + \frac{(k+1)^2}{(2k+1)^2} \eta_{k+1}.\]
Since $k \geq 2^\ell \geq K$, we know that $\frac{\alpha}{(2k+1)^2} \leq A$.
Moreover, note that \[\frac{2k^2+(k+1)^2}{(2k+1)^2} = \frac{3}{4} - \frac{4k-1}{4(2k+1)^2} \leq \frac{3}{4}.\]
It follows that
\[|\varepsilon_{2k+1}| \leq A + \frac{2k^2+(k+1)^2}{(2k+1)^2} M_\ell + \frac{(k+1)^2}{(2k+1)^2} A \leq 2A + \frac{3M_\ell}{4}.\]

Overall, $|\varepsilon_m| \leq 2A + \frac{3M_\ell}{4}$ whenever $2^{\ell+1} \leq m \leq 2^{\ell+2}$,
which shows that $M_{\ell+1} \leq 2A + \frac{3M_\ell}{4}$.
It follows that $\limsup M_{\ell} \leq 8A$ and,
since $A$ is an arbitrary positive constant, that $M_\ell \to 0$ when $\ell \to +\infty$.
Recall that $|\varepsilon_k| \leq M_{\ell}$ whenever $2^\ell \leq k \leq 2^{\ell+1}$:
this proves that $\varepsilon_k \to 0$, and therefore that $\theta_k = \eta_k - \varepsilon_k \to 0$.
It follows that \[\phi_k \sim \left(1+\mathbf{1}_{k \in 2\ZZ+1}\right) \frac{k^2}{\pi^2}\] when $k \to +\infty$.

With the above notations, and according to Theorem~\ref{thm:G-and-L3}, we have
\[g_{3,k} = \mathbf{1}_{k = 0} + 2 \left(\varphi(k+2)-\mathbf{1}_{k \in 2\ZZ} + 2 \sum_{i=1}^{\lfloor k/2 \rfloor} \varphi(k+3-2i)\right) \mathbf{1}_{k \geq 1} =
4 \phi_{k+1} + \calO(k).\]
Morover, we just showed that
$k^2 \leq \left(1+\mathbf{1}_{k \in 2\ZZ+1}\right) k^2 \sim \pi^2 \phi_k$, and therefore that
$k^2 = \calO(\phi_k)$. It follows that
$g_{3,k} = 4 \phi_{k+1} + \calO(k) = 4 \phi_{k+1} + o(\phi_k)$, i.e. that
$g_{3,k} \sim 4 \phi_{k+1}$,
which proves Proposition~\ref{pro:cut-euler-summatory-even-odd}.
\end{proof}

From Proposition~\ref{pro:cut-euler-summatory-even-odd} follows an
additional result about the ``complexity'' of the sequence $(g_{3,k})_{k \geq 0}$,
whose proof (similar to that of Section~\ref{subsubsection:holonomy}) is omitted here.

\begin{cor}
The Lambert series $S_3(z) := \sum_{n \geq 1} g_{3,n} \frac{z^n}{1-z^n}$ is not holonomic.
\label{cor:lambert-series}
\end{cor}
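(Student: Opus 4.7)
The plan is to emulate the strategy of Section~\ref{subsubsection:holonomy}, adapted to the Lambert-series setting. Expanding $\frac{z^n}{1-z^n} = \sum_{k \geq 1}z^{kn}$, I rewrite $S_3(z) = \sum_{m \geq 1}B_m z^m$ where $B_m := \sum_{d \mid m} g_{3,d}$. Assuming $S_3(z)$ holonomic would force $(B_m)_{m \geq 1}$ to be $P$-recursive: there exist integer-coefficient polynomials $A_0, \ldots, A_k$, with $A_k \not\equiv 0$, satisfying $\sum_{i=0}^k A_i(n) B_{n+i} = 0$ for all $n \geq 1$. I then fix $\ell$ realizing $d := \max_i \deg A_i$, denote by $a_{i,d}$ the leading coefficient of $A_i$, and set $a_\infty := \sum_i |a_{i,d}|$, exactly as in Section~\ref{subsubsection:holonomy}.

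The second step is to probe the recursion at well-chosen $n$. Following the Euler/CRT/Dirichlet blueprint, I pick pairwise disjoint sets of primes $P_0, \ldots, P_k$, all greater than $k$, with $\prod_{p \in P_i}(1-p^{-1})$ arbitrarily small; set $b_i := \prod_{p \in P_i} p$; use the Chinese remainder theorem to find $N$ with $b_i \mid N+i$ for each $i \neq \ell$; and apply Dirichlet's theorem to produce arbitrarily large $n$ in $N + \bigl(\prod_{i \neq \ell} b_i\bigr)\NN$ with $n+\ell$ prime. Along such $n$, one has $B_{n+\ell} = g_{3,1} + g_{3,n+\ell}$, while for $i \neq \ell$ the decomposition $B_{n+i} = g_{3,n+i} + \sum_{d \mid n+i,\, d < n+i} g_{3,d}$ shows that the tail is bounded by the number-of-divisors of $n+i$ times $g_{3,(n+i)/p_{\min}(n+i)}$, which is $o((n+i)^2)$ provided the primes in $P_i$ are large enough, by Proposition~\ref{pro:cut-euler-summatory-even-odd}.

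The third step combines these estimates with the $P$-recursion. Using $g_{3,m} = (4/\pi^2)(1 + \mathbf{1}_{m \in 2\ZZ}) m^2 + o(m^2)$, one divides by $n^{d+2}$ and passes to the limit: the leading-order constraint is a single linear relation on $(a_{i,d})_{0 \leq i \leq k}$, but finer information is extracted from the subleading corrections to $g_{3,m}$, which reintroduce the Euler totient via the identity $g_{3,k} = 2\varphi(k+2) + 4\sum_{i=1}^{\lfloor k/2 \rfloor} \varphi(k+3-2i) + O(1)$. By varying the arithmetic progression --- e.g., incorporating additional prime moduli in the CRT construction so as to modulate the subleading $\varphi$-term independently at each $i \neq \ell$ while keeping $n+\ell$ prime --- one produces enough independent linear relations among the $a_{i,d}$'s to force them all to vanish, contradicting $A_k \not\equiv 0$.

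The main obstacle, and the reason the proof is only ``similar'' to Section~\ref{subsubsection:holonomy}, lies in the third step. There, the contradiction was extracted directly at leading order because $\varphi(n+i)$ can be driven to near-zero by the CRT while $\varphi(n+\ell)$ remains maximal; here $B_{n+i}$ is always of order $(n+i)^2$, so the contradiction must be teased out of the subleading $\varphi$-dependent corrections to the divisor sum. This requires careful accounting of both the divisor-sum decomposition of $B_m$ and of the asymptotic expansion of Proposition~\ref{pro:cut-euler-summatory-even-odd}, and is where the technical substance of the proof lies.
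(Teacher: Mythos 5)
Your expansion $S_3(z) = \sum_m B_m z^m$ with $B_m = \sum_{d \mid m} g_{3,d}$ is correct, and you rightly observe that the original argument cannot be carried over verbatim since every ratio $B_{n+i}/(n+i)^2$ is bounded away from zero. However, step~2 contains a genuine error: under the CRT conditions you impose, the tail $\sum_{d \mid n+i,\, d < n+i} g_{3,d}$ is generically \emph{not} $o\!\left((n+i)^2\right)$. Forcing $b_i \mid n+i$ constrains neither the other prime factors of $n+i$ nor its smallest prime factor; in particular, whenever $n+i$ is even (which necessarily happens for roughly half of the indices $i$), the single divisor $d = (n+i)/2$ already contributes $g_{3,(n+i)/2} = \Theta\!\left((n+i)^2\right)$ to the tail, by Proposition~\ref{pro:cut-euler-summatory-even-odd}.

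This error is exactly what sends step~3 astray. After dividing the $P$-recursion by $n^{d+2}$ and passing to the limit, every $o(n^2)$ contribution vanishes --- including the ``subleading $\varphi$-corrections'' you propose to exploit; the only thing that survives is the limiting ratio $h_i := \lim B_{n+i}/(n+i)^2$, giving a single relation $\sum_i a_{i,d}\, h_i = 0$. The usable variation therefore sits at \emph{leading} order, in the very divisor-sum tail that step~2 tries to suppress: Proposition~\ref{pro:cut-euler-summatory-even-odd} gives $B_m \sim \frac{4}{\pi^2}\, m^2 \sum_{e \mid m}\bigl(1 + \mathbf{1}_{2 \mid m/e}\bigr)\, e^{-2}$ along sequences whose small-prime structure is controlled, and since any prime $p > k$ divides at most one of $n, \ldots, n+k$, the CRT lets one modulate each $h_i$ independently while holding the others fixed. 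Iterating over $i$ produces $k+1$ independent relations and forces $a_{i,d} = 0$ for every $i$, the desired contradiction. This is a different mechanism from what you describe: the finer information should be extracted from the leading-order divisor structure of the $n+i$, not from subleading corrections that cannot survive the normalization by $n^{d+2}$.
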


This result completes Theorem~\ref{thm:G-and-L3} by showing that neither the ``standard'' generating function
nor the Lambert series associated to the sequence $(g_{3,n})_{n \geq 1}$ are holonomic.
In particular, provided that the sequence $(\varphi(n))_{n \geq 1}$ has a very simple Lambert series
$\sum_{n \geq 1}\varphi(n) \frac{z^n}{1-z^n} = \frac{z}{(1-z)^2}$ and that
the function $\calG_3(z) = \sum_{n \geq 0} g_{3,n} z^n$ is the composition of the series $\sum_{n \geq 1}\varphi(n) z^n$ by a
rational fraction, one might have hoped that the Lambert series $S_3(z)$ would be simple too.
Corollary~\ref{cor:lambert-series} proves that this is not the case.

\subsection{Estimates in $B_n$ ($n \geq 4$)}

We did not manage to compute explicitly the generating functions $\calG_n(z)$ nor the integers $g_{n,k}$ for $n \geq 4$.
Hence, we settle for upper and lower bounds.

First, we find an upper bound on the number of \emph{virtual} coordinates
$(s_0,a_1,s_1,\ldots,a_n,s_n)$ such that 
$\sum_{i=0}^n s_i = k$;
of course, this will also provide us with an upper bound on the integers $g_{n,k}$.

\begin{pro}
Let $n \geq 1$ and $k \geq 0$ be integers.
Then, $g_{n,k} \leq 2^n \left(\frac{k+n-1}{n-1}\right)^{n-2} \binom{k+n-2}{n-2}$.
\label{pro:not-too-many-people-v2}
\end{pro}

\begin{proof}
Let us bound above the number of ways in which one can choose a tuple $(s_0,a_1,\ldots,s_n)$ of actual coordinates.
First, there are exactly $\binom{k+n-2}{n-2}$ ways of choosing non-negative integers $s_1,\ldots,s_{n-1}$ whose sum is $k$.

Second, we know that $0 \leq a_i \leq 2 \min \{s_{i-1},s_i\}+1$ for all integers $i \in \{1,2,\ldots,n\}$.
Then, let $u$ be an integer such that $s_u = \max \{s_1,\ldots,s_{n-1}\}$:
we know that $0 \leq a_j \leq 2 s_{j-1}+1$ when $1 \leq j \leq u$ and that
$0 \leq a_j \leq 2 s_j+1$ when $u+1 \leq j \leq n$.
Therefore, let $S_u$ denote the set $\{1,\ldots,u-1,u+1,\ldots,n-1\}$:
the tuple $(a_1,\ldots,a_n)$ must belong to the Cartesian product
$\{0,1\} \times \prod_{j \in S_u} \{0,\ldots,2s_j+1\} \times \{0,1\}$, whose cardinality is
$P = 2^n \prod_{j \in S_u} (s_j+1)$.

By arithmetic-geometric inequality, it follows that
\[P \leq 2^n \left(\frac{\sum_{j \in S_u} (s_j+1)}{n-2}\right)^{n-2}
\leq 2^n \left(\frac{\sum_{j=1}^{n-1} (s_j+1)}{n-1}\right)^{n-2} = 2^n \left(\frac{k+n-1}{n-1}\right)^{n-2},\]
which completes the proof.
\end{proof}

In order to compute a lower bound, we also prove a combinatorial result which is interesting in itself.

\begin{pro}
Let $(s_0,s_1,\ldots,s_n)$ be non-negative integers, with $s_0 = s_n = 0$.
There exists integers $a_1,\ldots,a_n$ such that
$(s_0,a_1,s_1,\ldots,a_n,s_n)$ are actual coordinates.
\end{pro}

\begin{proof}
Let us choose $a_i = s_{i-1}$ if $s_{i-1} \leq s_i$, and $a_i = s_i+1$ if $s_{i-1} > s_i$,
and let $\calD$ be the generalised curve diagram associated to the coordinates $\bs\ba := (s_0,a_1,s_1,\ldots,a_n,s_n)$.
We show below that $\calD$ is a $1$-generalised curve diagram.
We do so by proving, using an induction on $i \in \{0,\ldots,n\}$, the following properties $\calP_i$ and $\calQ_i$:
\begin{eqnarray*}
\calP_i & = & \forall j \in \{1,\ldots,s_i\}, \conc{i}{j} \equiv \conc{i}{2s_i+1-j}; \\
\calQ_i & = & \forall \ell \in \{1,\ldots,2s_{i-1}+1\}, \exists m \in \{1,\ldots,2s_i+1\}
\text{ such that } \conc{i-1}{\ell} \equiv \conc{i}{m}.
\end{eqnarray*}

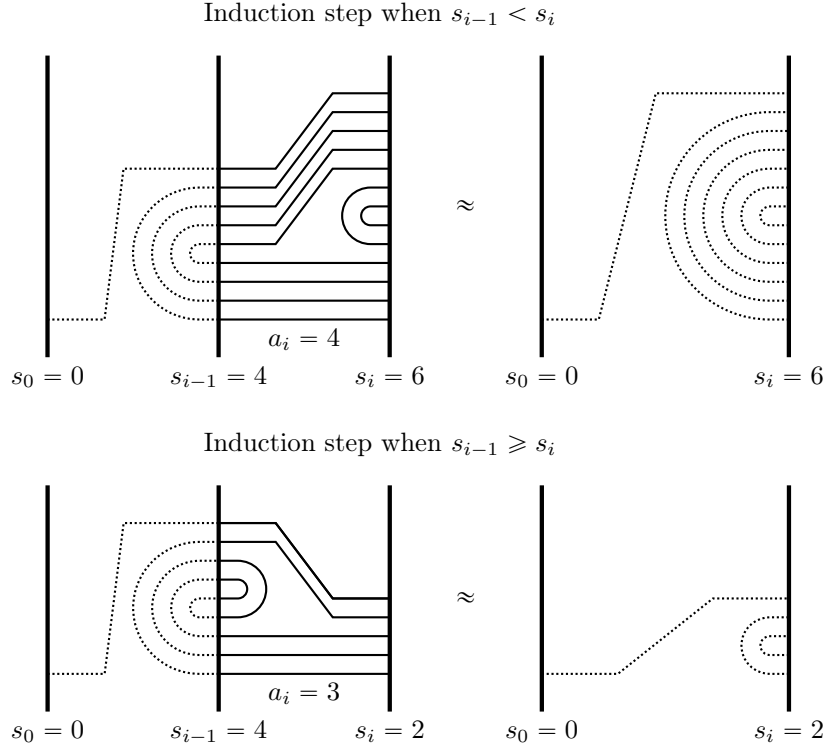
\begin{figure}[!ht]
\begin{center}
\begin{tikzpicture}[scale=0.25]
\draw[draw=black,ultra thick] (5,-2) -- (5,14);
\draw[draw=black,ultra thick] (14,-2) -- (14,14);
\draw[draw=black,ultra thick] (23,-2) -- (23,14);

\draw[draw=black,densely dotted,thick] (5,0) -- (8,0) -- (9,8) -- (14,8);
\draw[draw=black,thick] (14,8) -- (17,8) -- (20,12) -- (23,12);
\draw[draw=black,densely dotted,thick] (14,0) -- (13,0) arc (270:90:3.5) -- (14,7);
\draw[draw=black,densely dotted,thick] (14,1) -- (13,1) arc (270:90:2.5) -- (14,6);
\draw[draw=black,densely dotted,thick] (14,2) -- (13,2) arc (270:90:1.5) -- (14,5);
\draw[draw=black,densely dotted,thick] (14,3) -- (13,3) arc (270:90:0.5) -- (14,4);

\draw[draw=black,thick] (14,0) -- (23,0);
\draw[draw=black,thick] (14,1) -- (23,1);
\draw[draw=black,thick] (14,2) -- (23,2);
\draw[draw=black,thick] (14,3) -- (23,3);
\draw[draw=black,thick] (23,4) -- (22,4) arc (270:90:1.5) -- (23,7);
\draw[draw=black,thick] (23,5) -- (22,5) arc (270:90:0.5) -- (23,6);
\draw[draw=black,thick] (14,4) -- (17,4) -- (20,8) -- (23,8);
\draw[draw=black,thick] (14,5) -- (17,5) -- (20,9) -- (23,9);
\draw[draw=black,thick] (14,6) -- (17,6) -- (20,10) -- (23,10);
\draw[draw=black,thick] (14,7) -- (17,7) -- (20,11) -- (23,11);

\node at (27,6) {$\approx$};

\draw[draw=black,ultra thick] (31,-2) -- (31,14);
\draw[draw=black,ultra thick] (44,-2) -- (44,14);

\draw[draw=black,densely dotted,thick] (31,0) -- (34,0) -- (37,12) -- (44,12);
\draw[draw=black,densely dotted,thick] (44,0) -- (43,0) arc (270:90:5.5) -- (44,11);
\draw[draw=black,densely dotted,thick] (44,1) -- (43,1) arc (270:90:4.5) -- (44,10);
\draw[draw=black,densely dotted,thick] (44,2) -- (43,2) arc (270:90:3.5) -- (44,9);
\draw[draw=black,densely dotted,thick] (44,3) -- (43,3) arc (270:90:2.5) -- (44,8);
\draw[draw=black,densely dotted,thick] (44,4) -- (43,4) arc (270:90:1.5) -- (44,7);
\draw[draw=black,densely dotted,thick] (44,5) -- (43,5) arc (270:90:0.5) -- (44,6);

\node[anchor=north] at (5,-2) {$s_0 = 0$};
\node[anchor=north] at (14,-2) {$s_{i-1} = 4$};
\node[anchor=north] at (18.5,0) {$a_i = 4$};
\node[anchor=north] at (23,-2) {$s_i = 6$};
\node[anchor=north] at (31,-2) {$s_0 = 0$};
\node[anchor=north] at (44,-2) {$s_i = 6$};
\node[anchor=south] at (22.5,15) {Induction step when $s_{i-1} < s_i$};
\end{tikzpicture}

\begin{tikzpicture}[scale=0.25]
\draw[draw=black,ultra thick] (46,-2) -- (46,10);
\draw[draw=black,ultra thick] (55,-2) -- (55,10);
\draw[draw=black,ultra thick] (64,-2) -- (64,10);

\draw[draw=black,densely dotted,thick] (46,0) -- (49,0) -- (50,8) -- (55,8);
\draw[draw=black,thick] (55,8) -- (58,8) -- (61,4) -- (64,4);
\draw[draw=black,densely dotted,thick] (55,0) -- (54,0) arc (270:90:3.5) -- (55,7);
\draw[draw=black,densely dotted,thick] (55,1) -- (54,1) arc (270:90:2.5) -- (55,6);
\draw[draw=black,densely dotted,thick] (55,2) -- (54,2) arc (270:90:1.5) -- (55,5);
\draw[draw=black,densely dotted,thick] (55,3) -- (54,3) arc (270:90:0.5) -- (55,4);

\draw[draw=black,thick] (55,0) -- (64,0);
\draw[draw=black,thick] (55,1) -- (64,1);
\draw[draw=black,thick] (55,2) -- (64,2);
\draw[draw=black,thick] (55,3) -- (56,3) arc (-90:90:1.5) -- (56,6) -- (55,6);
\draw[draw=black,thick] (55,4) -- (56,4) arc (-90:90:0.5) -- (56,5) -- (55,5);
\draw[draw=black,thick] (55,7) -- (58,7) -- (61,3) -- (64,3);
\draw[draw=black,thick] (55,8) -- (58,8) -- (61,4) -- (64,4);

\node at (68,4) {$\approx$};

\draw[draw=black,ultra thick] (72,-2) -- (72,10);
\draw[draw=black,ultra thick] (85,-2) -- (85,10);

\draw[draw=black,densely dotted,thick] (72,0) -- (76,0) -- (81,4) -- (85,4);
\draw[draw=black,densely dotted,thick] (85,0) -- (84,0) arc (270:90:1.5) -- (85,3);
\draw[draw=black,densely dotted,thick] (85,1) -- (84,1) arc (270:90:0.5) -- (85,2);

\node[anchor=north] at (46,-2) {$s_0 = 0$};
\node[anchor=north] at (55,-2) {$s_{i-1} = 4$};
\node[anchor=north] at (59.5,0) {$a_i = 3$};
\node[anchor=north] at (64,-2) {$s_i = 2$};
\node[anchor=north] at (72,-2) {$s_0 = 0$};
\node[anchor=north] at (85,-2) {$s_i = 2$};
\node[anchor=south] at (63.5,11) {Induction step when $s_{i-1} \geq s_i$};
\node at (65.5,14) {};
\end{tikzpicture}
\end{center}
\caption{Constructing actual coordinates}
\label{fig:getting-actual-coordinates}
\end{figure}

First, $\calP_0$ and $\calQ_0$ are vacuously true.
Now, let $i \in \{1,\ldots,n\}$ be some integer such that $\calP_{i-1}$ and $\calQ_{i-1}$ are true,
and let us prove $\calP_i$ and $\calQ_i$.

If $s_{i-1} \leq s_i$, then it follows from $\calP_i$ and Lemma~\ref{lem:left-right-box} that
\begin{itemize}
\item $\conc{i}{j} \sim \conc{i-1}{j} \equiv \conc{i-1}{2s_{i-1}+1-j} \sim \conc{i}{2s_i+1-j}$ whenever $1 \leq j \leq s_{i-1}$;
\item $\conc{i}{s_i+1-j} \sim \conc{i}{s_i+j}$ whenever $1 \leq j \leq s_i-s_{i-1}$;
\item $\conc{i-1}{2s_{i-1}+1} \sim \conc{i}{2s_i+1}$,
\end{itemize}
which proves $\calP_i$ and $\calQ_i$.

If $s_{i-1} > s_i$, then it follows from $\calP_i$ and Lemma~\ref{lem:left-right-box} that
\begin{itemize}
 \item $\conc{i}{j} \sim \conc{i-1}{j} \equiv \conc{i-1}{2s_{i-1}+1-j} \sim \conc{i}{2s_i+1-j}$ whenever $1 \leq j \leq s_i-1$;
 \item $\conc{i-1}{2s_{i-1}+1} \sim \conc{i}{2s_i+1}$,
\end{itemize}
which already proves $\calP_i$ in the case $j \neq s_i$ and $\calQ_i$ in the case $m \notin \{s_i,\ldots,2s_{i-1}-s_i\}$.

Moreover, observe that $\conc{i-1}{j} \equiv \conc{i-1}{2s_{i-1}+1-j} \sim \conc{i-1}{j+2}$ whenever $s_i \leq j \leq 2s_{i-1}-1-s_i$.
An immediate induction on $j$ then shows that $\conc{i-1}{s_i} \equiv \conc{i-1}{j}$ for all $j \in \{s_i,s_i+2,\ldots,2s_{i-1}-s_i\}$ and that
$\conc{i-1}{s_i+1} \equiv \conc{i-1}{j}$ for all $j \in \{s_i+1,s_i+3,\ldots,2s_{i-1}+1-s_i\}$.
Therefore, it follows that $\conc{i}{s_i} \sim \conc{i-1}{s_i} \equiv \conc{i-1}{2s_{i-1}+1-s_i} \equiv \conc{i-1}{s_i+1} \sim \conc{i}{s_i+1}$ and,
incidentally, that $\conc{i-1}{s_i} \equiv (\conc{i-1}{s_i} \text{ and } \conc{i-1}{s_i+1}) \equiv \conc{i-1}{j}$ whenever $s_i \leq j \leq 2s_{i-1}-s_i$.
These two remarks respectively complete the case $j = s_i$ of $\calP_i$ and the case $s_i \leq m \leq 2s_{i-1}-s_i$ of $\calQ_i$,
which proves that both properties $\calP_i$ and $\calQ_i$ must hold.

Overall, we have proved that $\calQ_i$ holds for all $i \in \{0,\ldots,n\}$, which proves, using an immediate induction on $i$,
that $\conc{i}{j} \equiv \conc{n}{1}$ for all $i \in \{0,\ldots,n\}$ and for all $j \in \{1,\ldots,2s_j+1\}$.
This means that $\equiv$ has one unique equivalence class, i.e. that $\calD$ is a $1$-generalised curve diagram, that is, that $\bs\ba$ are actual coordinates.
\end{proof}

\begin{cor}
Consider integers $n \geq 1$ and $k \geq 0$.
Then, $g_{n,k} \geq \binom{k+n-2}{n-2}$.
\label{cor:many-people-v2}
\end{cor}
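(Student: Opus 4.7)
The plan is to derive this bound as a direct consequence of the preceding proposition, by exhibiting an injection from compositions of $k$ into $n-1$ non-negative parts to the set of braids on $n$ strands with norm $2k+n-1$.

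First, I would recall that $g_{n,k}$ counts precisely those braids $\beta \in B_n$ whose coordinates $(s_0, a_1, s_1, \ldots, a_n, s_n)$ satisfy $s_0 = s_n = 0$ and $\sum_{i=1}^{n-1} s_i = k$, and which give rise to an actual ($1$-generalised) tight curve diagram. By Corollary~\ref{cor:same-coordinates-same-braid}, distinct tuples of actual coordinates correspond to distinct braids, so it suffices to lower-bound the number of such tuples.

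Next, I would count compositions: the number of tuples $(s_1, \ldots, s_{n-1})$ of non-negative integers summing to $k$ is the classical stars-and-bars count $\binom{k+n-2}{n-2}$. For each such tuple, set $s_0 := 0$ and $s_n := 0$; the preceding proposition guarantees the existence of at least one tuple $(a_1, \ldots, a_n)$ such that $(s_0, a_1, s_1, \ldots, a_n, s_n)$ are actual coordinates. Fix one such choice (e.g. the explicit one from the proof of the proposition, $a_i = s_{i-1}$ if $s_{i-1} \leq s_i$ and $a_i = s_i + 1$ otherwise). This produces a map from compositions of $k$ into $n-1$ parts to braids in $B_n$ with $\|\beta\|_d = 2k + n - 1$.

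Finally, I would verify injectivity: two distinct sequences $(s_1, \ldots, s_{n-1}) \neq (s'_1, \ldots, s'_{n-1})$ yield tuples of actual coordinates that differ in at least one of the entries $s_i$, hence correspond to distinct braids by Corollary~\ref{cor:same-coordinates-same-braid}. This gives the desired inequality $g_{n,k} \geq \binom{k+n-2}{n-2}$. There is no real obstacle here; the only content of the argument is packaged in the preceding proposition, and the corollary is essentially a one-line consequence once the bookkeeping of coordinates is invoked.
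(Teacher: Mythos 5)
Your proof is correct and matches the paper's intended argument: the paper states this corollary with no proof, as an immediate consequence of the preceding proposition, and your filling-in — stars-and-bars on the $s$-coordinates, the proposition supplying an admissible $(a_1,\ldots,a_n)$ for each composition, and uniqueness of coordinates to separate the resulting braids — is exactly the one-liner the authors had in mind. One small quibble: Corollary~\ref{cor:same-coordinates-same-braid} says that equal coordinates yield equal braids, whereas the injectivity you need is that a single braid cannot carry two distinct coordinate tuples; that follows from the coordinates being a well-defined function of the braid (via Proposition~\ref{pro:same-coordinates-implies-isotopic} and the uniqueness of the tight diagram), but the logic is trivially repaired and does not affect the result.
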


\subsection{Experimental Data and Conjectures}
\label{subsection:experiments}

Proposition~\ref{pro:not-too-many-people-v2} and Corollary~\ref{cor:many-people-v2} prove that
\[\binom{k+n-2}{n-2} \leq g_{n,k} \leq 2^n \left(\frac{k+n-1}{n-1}\right)^{n-2} \binom{k+n-2}{n-2}.\]
Unfortunately, these lower and upper bounds do not match,
since their ratio is equal to $2^n \left(\frac{k+n-1}{n-1}\right)^{n-2}$,
hence grows arbitrarily when $n$ and $k$ grow.
Therefore, aiming to identify
simple asymptotic estimations of $g_{n,k}$
when $n$ is fixed and $k \to +\infty$,
we look for experimental data.

\begin{figure}[!ht]
\begin{center}
\begin{tikzpicture}
\begin{axis}[axis x line=center,axis y line=center,xlabel style={right},ylabel style={above right},width=0.55\textwidth,
xlabel={$k$},ylabel={$g_{4,k}/k^4$~~---~~{\color{blackgray}$g_{4,k}/(k+4)^4$}~~~~$(\times 100)$}]
\addplot[only marks,color=black,mark size=0.8pt] coordinates{
(17,7.88305) (18,7.38641) (19,7.73118) (20,6.97853) (21,7.32367) (22,6.85181) (23,7.09205) (24,6.60425)
(25,6.92663) (26,6.42468) (27,6.72763) (28,6.34506) (29,6.52765) (30,6.20046) (31,6.5144) (32,6.04214)
(33,6.33226) (34,6.05864) (35,6.20278) (36,5.90613) (37,6.18773) (38,5.80158) (39,6.04466) (40,5.82665)
(41,5.95157) (42,5.71139) (43,5.98382) (44,5.61774) (45,5.83189) (46,5.68053) (47,5.78291) (48,5.55269)
(49,5.80764) (50,5.49556) (51,5.69493) (52,5.55299) (53,5.63112) (54,5.43873) (55,5.68631) (56,5.38194)
(57,5.55797) (58,5.4593) (59,5.52404) (60,5.34093) (61,5.5848) (62,5.30215) (63,5.46611) (64,5.38093)
(65,5.42241) (66,5.27175) (67,5.49995) (68,5.22582) (69,5.37664) (70,5.31654) (71,5.3538) (72,5.20282)
(73,5.42286) (74,5.17215) (75,5.31584) (76,5.26305) (77,5.28647) (78,5.15304) (79,5.36857) (80,5.12008)
(81,5.25436) (82,5.22097) (83,5.23169) (84,5.10218) (85,5.31448) (86,5.08468) (87,5.20393) (88,5.17661)
(89,5.18379) (90,5.06568) (91,5.27369) (92,5.04353) (93,5.15704) (94,5.14451) (95,5.14235) (96,5.03174)
(97,5.23217) (98,5.01028) (99,5.12033) (100,5.11094) (101,5.10504) (102,5.00382) (103,5.19793) (104,4.98051)
(105,5.08376) (106,5.0885) (107,5.07321) (108,4.97213) (109,5.16673) (110,4.95568) (111,5.05774) (112,5.063)
(113,5.04167) (114,4.95055) (115,5.14123) (116,4.93516) (117,5.02718) (118,5.04061) (119,5.01763) (120,4.92742)
(121,5.11536) (122,4.91624) (123,5.0027) (124,5.01977) (125,4.99068) (126,4.91006) (127,5.09421) (128,4.89306)
(129,4.98058) (130,5.00448) (131,4.97367) (132,4.89043) (133,5.07084) (134,4.87848) (135,4.95939) (136,4.98896)
(137,4.95201) (138,4.87373) (139,5.05482) (140,4.86237) (141,4.94324) (142,4.97513) (143,4.93314) (144,4.85973)
(145,5.03608) (146,4.84996) (147,4.92537) (148,4.95846) (149,4.91683) (150,4.84605) (151,5.0242) (152,4.83672)
(153,4.90669) (154,4.94745) (155,4.90182) (156,4.83361) (157,5.0071) (158,4.8236) (159,4.89354) (160,4.93436)
(161,4.88796) (162,4.82334) (163,4.99222) (164,4.81153) (165,4.8793) (166,4.92554) (167,4.87512) (168,4.80975)
(169,4.98156) (170,4.80256) (171,4.86922) (172,4.91549) (173,4.86106) (174,4.80133) (175,4.96966) (176,4.79335)
};
\addplot[only marks,color=blackgray,mark size=0.8pt] coordinates{
(17,4.115) (18,3.987) (19,4.301) (20,3.99) (21,4.293) (22,4.109) (23,4.343) (24,4.123) (25,4.402) (26,4.151)
(27,4.414) (28,4.223) (29,4.403) (30,4.235) (31,4.502) (32,4.222) (33,4.471) (34,4.32) (35,4.464) (36,4.288)
(37,4.53) (38,4.281) (39,4.494) (40,4.363) (41,4.487) (42,4.334) (43,4.569) (44,4.315) (45,4.505) (46,4.412)
(47,4.515) (48,4.357) (49,4.579) (50,4.353) (51,4.531) (52,4.437) (53,4.518) (54,4.381) (55,4.598) (56,4.368)
(57,4.527) (58,4.462) (59,4.53) (60,4.394) (61,4.609) (62,4.389) (63,4.538) (64,4.48) (65,4.527) (66,4.413)
(67,4.616) (68,4.397) (69,4.535) (70,4.495) (71,4.537) (72,4.419) (73,4.616) (74,4.412) (75,4.544) (76,4.508)
(77,4.537) (78,4.431) (79,4.625) (80,4.419) (81,4.543) (82,4.522) (83,4.539) (84,4.434) (85,4.626) (86,4.433)
(87,4.544) (88,4.527) (89,4.54) (90,4.443) (91,4.632) (92,4.436) (93,4.542) (94,4.537) (95,4.541) (96,4.449)
(97,4.632) (98,4.441) (99,4.544) (100,4.541) (101,4.541) (102,4.456) (103,4.634) (104,4.445) (105,4.542)
(106,4.551) (107,4.542) (108,4.456) (109,4.635) (110,4.45) (111, 4.546) (112,4.555) (113,4.54) (114,4.462)
(115,4.638) (116,4.456) (117,4.543) (118,4.559) (119,4.542) (120,4.464) (121,4.638) (122,4.461) (123,4.543)
(124,4.562) (125,4.539) (126,4.469) (127,4.64) (128, 4.46) (129,4.543) (130,4.568) (131,4.543) (132,4.47)
(133, 4.638) (134,4.465) (135,4.542) (136,4.572) (137,4.541) (138,4.472) (139,4.641) (140,4.467) (141,4.544)
(142,4.576) (143,4.54) (144,4.475) (145,4.64) (146,4.471) (147,4.543) (148,4.576) (149,4.54) (150,4.477)
(151, 4.644) (152,4.473) (153,4.54) (154,4.58) (155,4.54) (156,4.479) (157,4.642) (158,4.474) (159,4.541)
(160,4.581) (161,4.54) (162,4.482) (163,4.641) (164,4.475) (165, 4.54) (166,4.585) (167,4.54) (168,4.481)
(169,4.643) (170, 4.478) (171,4.542) (172,4.587) (173,4.538) (174,4.484) (175,4.643) (176,4.48)
};
\end{axis}
\end{tikzpicture}

\medskip

\begin{tikzpicture}
\begin{axis}[axis x line=center,axis y line=center,xlabel style={right},ylabel style={above right},width=0.55\textwidth,
xlabel={$k$},ylabel={$g_{5,k}/k^6$~~---~~{\color{blackgray}$g_{5,k}/(k+5)^6$}~~~~$(\times 1000)$}]
\addplot[only marks,color=black,mark size=0.8pt] coordinates{
(16,4.14276) (17,4.0862) (18,3.77019) (19,3.79798) (20,3.49957) (21,3.544) (22,3.2777) (23,3.36555)
(24,3.08647) (25,3.20386) (26,2.94511) (27,3.06967) (28,2.81866) (29,2.96149) (30,2.70394) (31,2.86665)
(32,2.6152) (33,2.77737) (34,2.53574) (35,2.71046) (36,2.45747) (37,2.64375) (38,2.40093) (39,2.58334)
(40,2.342) (41,2.53478) (42,2.28701) (43,2.4896) (44,2.2451) (45,2.44196) (46,2.20333) (47,2.40946)
(48,2.16125) (49,2.37316) (50,2.13117) (51,2.33762) (52,2.09812) (53,2.31177) (54,2.06657) (55,2.28574)
(56,2.04227) (57,2.25632) (58,2.01725) (59,2.23677) (60,1.98988) (61,2.21336)
};
\addplot[only marks,color=blackgray,mark size=0.8pt] coordinates{
(16,1.086) (17,1.15) (18,1.131) (19,1.207) (20,1.172) (21,1.245) (22,1.203) (23,1.286) (24,1.224) (25,1.315)
(26,1.248) (27,1.34) (28,1.265) (29,1.364) (30,1.276) (31,1.384) (32,1.29) (33,1.398) (34,1.301) (35,1.416)
(36,1.306) (37,1.428) (38,1.317) (39,1.438) (40,1.322) (41,1.45) (42,1.325) (43,1.46) (44,1.332) (45,1.465)
(46,1.336) (47,1.476) (48,1.337) (49,1.482) (50,1.343) (51,1.486) (52,1.345) (53,1.494) (54,1.346) (55,1.5)
(56,1.35) (57,1.502) (58,1.352) (59,1.509) (60,1.351) (61,1.512)
};
\end{axis}
\end{tikzpicture}
\end{center}
\caption{Estimating $g_{n,k}$ --- experimental data for $n = 4$ and $n = 5$}
\label{fig:gnk-decrease}
\end{figure}
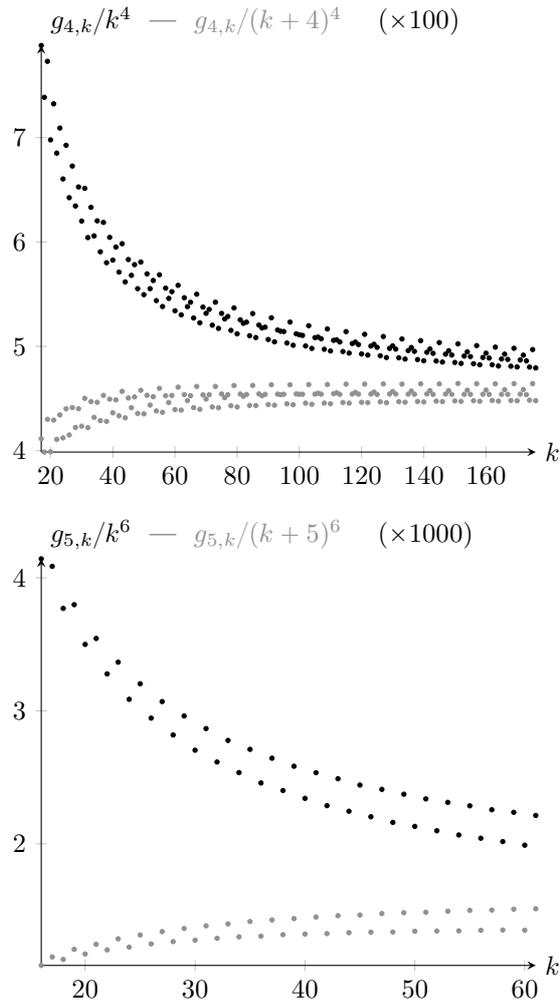

Figure~\ref{fig:gnk-decrease} presents the ratios $g_{n,k}/k^{2(n-2)}$ (in black) and
$g_{n,k}/(k+n)^{2(n-2)}$ (in gray).
We computed $g_{n,k}$ by enumerating all the virtual coordinates,
then checking individually which of them were actual coordinates
(up to refinements such as using the above-mentioned
symmetries to reduce the number of cases to look at).

The two series of points suggest the following conjecture,
which was already proven to be true when $n = 2$ and $n = 3$.

\begin{conj}
Let $n \geq 2$ be some integer. There exists two positive constants $\alpha_n$ and $\beta_n$ such that
$\alpha_n k^{2(n-2)} \leq g_{n,k} \leq \beta_n k^{2(n-2)}$ for all integers $k \geq 1$.
\label{conj:dn2k=k2n-4}
\end{conj}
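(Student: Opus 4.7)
\textbf{Proof proposal for Conjecture~\ref{conj:dn2k=k2n-4}.}

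The upper bound is immediate from Proposition~\ref{pro:not-too-many-people-v2}: since both factors $((k+n-1)/(n-1))^{n-2}$ and $\binom{k+n-2}{n-2}$ are $\Theta(k^{n-2})$, one may take $\beta_n = 2^n / ((n-1)^{n-2}(n-2)!) + o(1)$ asymptotically, and adjust the constant to cover the finitely many small values of $k$. So the substantive task is the lower bound $g_{n,k} \geq \alpha_n k^{2(n-2)}$.

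My plan is to adapt the permutation-based framework of Sections~\ref{subsubsection:proof-2}--\ref{subsubsection:proof-2b}. Given virtual coordinates $\bs\ba = (s_0, a_1, \ldots, s_n)$, I would first ``close by above'' the associated generalized curve diagram (Figure~\ref{fig:one-more-curve}); then, for each intermediate line $\bL_i$, define relations analogous to $\stackrel{\sqsubset}{\sim}$ and $\stackrel{\sqsupset}{\sim}$ recording which points of $\bL_i \cap \calD$ are joined by arcs lying to the left or to the right of $\bL_i$; and finally compose these relations along the $n-1$ intermediate lines to produce a permutation $\theta_{\bs\ba}$ on a set of size $\Theta(k)$, such that $\bs\ba$ are actual coordinates if and only if $\theta_{\bs\ba}$ is cyclic. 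This construction directly generalizes the permutation $\theta$ built in Section~\ref{subsubsection:proof-2}, where only one intermediate line was available.

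Once the framework is in place, the total number of virtual coordinates with $\sum s_i = k$ is already $\Theta(k^{2(n-2)})$ (the upper bound being tight in order of magnitude), so the conjecture becomes equivalent to the statement that a positive fraction of virtual coordinates yield a cyclic $\theta_{\bs\ba}$. For $n = 3$ this fraction was $6/\pi^2$: indeed, $\theta_{\bs\ba}$ was a translation $\bT_{\ell+1,a}$ or a translated cut $\bT\bC^{\bu\bt}_{\ell+1,a,b,c}$, whose cyclicity reduced (Lemmas~\ref{lem:cut-cyclic}--\ref{lem:sliding-cut-cyclic}) to the single coprimality condition $(c-1) \wedge (b+1) = 1$, and the classical asymptotic $\sum_{k \leq n} \varphi(k) \sim 3 n^2/\pi^2$ supplies the density.

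The main obstacle is that, for $n \geq 4$, $\theta_{\bs\ba}$ is a composition of $n-1$ generalized cuts, and characterizing cyclic compositions appears to require substantially more intricate number theory than a single $\gcd$ condition. A plausible workaround is to restrict attention to ``constant-interior'' profiles with $s_2 = \cdots = s_{n-2} = r$, so that the inner zones act as identities: the cyclicity problem then reduces to a four-zone analysis resembling the $n=3$ case, while each of the $n-4$ interior zones contributes $2r+1$ free choices of puncture placement. Combining this restriction with the symmetries of Lemma~\ref{lem:invariant-norm} and averaging over the boundary parameters $(s_1, s_{n-1})$ should, in principle, recover the exponent $2(n-2)$. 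Even so, a clean handling of the cyclicity of products of $n-1$ cuts remains where I expect genuinely new combinatorics to be needed.
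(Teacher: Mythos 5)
This statement is a \emph{conjecture} in the paper, not a theorem: the paper does not prove it and explicitly says so. What the paper supplies (Proposition~\ref{pro:not-too-many-people-v2} and Corollary~\ref{cor:many-people-v2}) is the pair of non-matching bounds
\[\binom{k+n-2}{n-2} \;\leq\; g_{n,k} \;\leq\; 2^n \left(\frac{k+n-1}{n-1}\right)^{n-2} \binom{k+n-2}{n-2},\]
which bracket $g_{n,k}$ only between $\Theta(k^{n-2})$ and $\Theta(k^{2(n-2)})$; the conjectured exponent $2(n-2)$ is then supported solely by the experimental data of Figure~\ref{fig:gnk-decrease} and by the already-established cases $n=2,3$. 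There is therefore no proof in the paper with which to compare your argument, and any correct proof would be a new result.

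Your handling of the upper bound is correct and matches what the paper already establishes. The lower bound, however, is a sketch and not a proof, as you yourself acknowledge at the end. Two concrete problems are worth naming. First, the heart of the matter --- characterizing when the permutation $\theta_{\bs\ba}$ obtained by composing $n-1$ generalized cuts is cyclic --- is precisely the missing combinatorics; the framework you describe is the natural generalization of Section~\ref{subsubsection:proof-2}, but carrying it out is the open problem itself, not a route around it. Second, the proposed ``constant-interior'' workaround cannot succeed even in principle, because it does not produce enough coordinates: fixing $s_2=\cdots=s_{n-2}=r$ leaves only two free $s$-parameters (say $s_1$ and $s_{n-1}$, with $r$ determined), so the number of surviving virtual coordinate tuples is $\Theta(k^2)\cdot\Theta(k^{n-2})=\Theta(k^n)$, which falls short of the required $\Theta(k^{2(n-2)})$ for every $n\geq 5$. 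Even a complete cyclicity analysis of these restricted profiles would at best yield $g_{n,k}=\Omega(k^n)$, leaving a gap of order $k^{n-4}$ against the conjecture. Any workable approach must retain essentially the full $(n-1)$-parameter family of $s$-profiles and still control cyclicity across all $n-1$ zones, which is exactly where the paper stops.
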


Figure~\ref{fig:gnk-decrease} also suggests that the ratios $g_{n,k}/k^{2(n-2)}$
might be split into convergent clusters, according to the value of
$k~\mathrm{mod}~6$ (when $n = 4$) or $k~\mathrm{mod}~2$ (when $n = 5$).
Once again, this is coherent with the patterns noticed for $n = 2$ and $n = 3$,
and therefore suggests a stronger conjecture.

\begin{conj}
Let $n \geq 2$ be some integer. There exists some positive integer $\rho_n$ such that,
for every integer $\ell \in \{0,1,\ldots,\rho_n-1\}$, the sequence of ratios
$\frac{g_{n,k\rho_n+\ell}}{k^{2(n-2)}}$ has a positive limit $\lambda_{n,\ell}$ when $k \to +\infty$.
\label{conj:dn2k=clusters}
\end{conj}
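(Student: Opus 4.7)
The plan is to extend to $n \geq 4$ the full chain of arguments of Sections~\ref{subsubsection:proof-2}--\ref{subsubsection:proof-3}, which yielded $\rho_3 = 2$ and explicit constants $\lambda_{3,0}, \lambda_{3,1}$ via the Euler totient. First, I would generalise the ``closure by above'' construction of Figure~\ref{fig:one-more-curve}: augment $\calD$ by one curve joining $\conc{0}{1}$ and $\conc{n}{1}$ that runs above all existing crosspoints, and let $\conc{i}{2s_i+2}$ denote its crossing with $\bL_i$. For each zone $\calZ_i$ this gives rise to analogues of $\stackrel{\sqsubset}{\sim}$ and $\stackrel{\sqsupset}{\sim}$, whose compositions define local permutations $\theta_1, \ldots, \theta_{n-1}$ of cyclic groups. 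The tuple $\bs\ba$ consists of actual coordinates if and only if a global permutation $\theta$ obtained by composing the $\theta_i$ on a common cyclic quotient is a cyclic permutation.

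The second step is a structural lemma generalising Lemmas~\ref{lem:cut-cyclic}--\ref{lem:sliding-cut-cyclic}: I would show that each $\theta_i$ is a translated cut whose parameters are affine in $(s_{i-1}, s_i, a_i)$, and that products of translated cuts on a common cyclic group admit a normal form up to conjugacy. The cyclicity of $\theta$ should then reduce to finitely many joint coprimality conditions $\gcd(L_j(\bs, \ba), M_j(\bs, \ba)) = 1$ for a fixed list of linear forms $L_j, M_j$. Granting this, one would write
\[g_{n,k} = \sum_{\substack{s_1+\cdots+s_{n-1}=k \\ s_i \geq 0}} \sum_{(a_1,\ldots,a_n)} \prod_j \mathbf{1}_{\gcd(L_j,M_j) = 1}\]
and attack the resulting sum by Möbius inversion in each $\gcd$, reducing it to lattice-point counts inside polytopes of dimension $2(n-2)$ scaled by $k$ (the exponent matches the upper bound of Proposition~\ref{pro:not-too-many-people-v2}).

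The third step is asymptotic. Standard Ehrhart-type arguments give a leading term $c_\mu \cdot k^{2(n-2)}$ per Möbius stratum, and summation over the Möbius parameters multiplies each $c_\mu$ by an Euler product that converges absolutely in the relevant range. The dependence on $k \bmod \rho_n$ arises because the linear forms $L_j, M_j$ take values whose parities, and more generally their residues modulo small primes, are governed by the residues of $s_1, \ldots, s_{n-1}$; the period $\rho_n$ is the least common multiple of these moduli, a finite set fixed by the normal form of translated cuts. Positivity of each $\lambda_{n,\ell}$ would follow by combining the upper bound with the explicit family from the proof of Corollary~\ref{cor:many-people-v2}, which provides $\Theta(k^{n-2})$ actual tuples spread over all partitions of $k$ and shows that the count is indeed non-degenerate on every residue class.

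The main obstacle, and the reason the author left this as a conjecture, is precisely the second step: a product of translated cuts is not itself a translated cut in general, and already for $n = 4$ the required classification branches on the ordering of $s_1, s_2, s_3$ and on the signs of the differences $s_i - s_{i-1}$. A promising route is to organise the analysis by the number of descents in the sequence $(s_0, s_1, \ldots, s_n)$ and induct on this statistic, exploiting the ``edge shrinking'' trick of Figure~\ref{fig:collapse} to reduce at each descent to a lower-$n$ instance of the same problem. If the resulting normal form can be stated compactly enough to keep the number of branches polynomial in $n$, the subsequent number-theoretic step is then essentially classical and should yield the constants $\lambda_{n,\ell}$ in closed form.
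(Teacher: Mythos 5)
The statement you are attempting is explicitly labelled a \emph{conjecture} in the paper; the author proves nothing about it beyond the supporting evidence in Figure~\ref{fig:gnk-decrease} and the verified cases $n=2,3$. There is therefore no ``paper's own proof'' to compare against, and what you have written is, as you yourself say, a research program rather than a proof. Your overall architecture (reduce to cyclicity of a permutation built from the crossing pattern, characterise cyclicity by coprimality conditions, then do Möbius inversion plus lattice-point asymptotics) is a sensible extrapolation of the $n=3$ argument, and it correctly identifies the exponent $2(n-2)$ and the shape of the claimed result.

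The genuine gap is exactly where you flag it, but it is worth being precise about why the step is hard. In the $n=3$ case the reduction to a single permutation $\theta$ of $\ZZ_{\ell+1}$ works because there is one distinguished middle line $\bL_2$, and the left and right matchings are nested families of arcs on that one line, which is what forces $\theta$ to be a translation or a translated cut. For $n\geq 4$ the lines $\bL_1,\ldots,\bL_{n-1}$ carry $2s_1+1,\ldots,2s_{n-1}+1$ crossing points, which in general are different numbers, so there is no single cyclic group $\ZZ_m$ on which your putative ``local permutations $\theta_i$'' can be composed; the phrase ``on a common cyclic quotient'' papers over the fact that the combinatorics is a tree of interleaved matchings rather than a single line. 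Even if one chooses a distinguished line $\bL_i$ and defines $\stackrel{\sqsubset}{\sim}$ and $\stackrel{\sqsupset}{\sim}$ by traversing $\calZ_1\cup\cdots\cup\calZ_i$ and $\calZ_{i+1}\cup\cdots\cup\calZ_n$, these relations are no longer the simple nested matchings of Lemma~\ref{lem:left-right-box} restricted to a single zone, and there is no reason the resulting permutation should be a translated cut, nor any known normal form for compositions of such cuts. Consequently the claim that cyclicity ``reduces to finitely many joint coprimality conditions on a fixed list of linear forms'' is not an established lemma but precisely the open problem; your steps two and three (Möbius inversion, Ehrhart asymptotics, positivity via Corollary~\ref{cor:many-people-v2}) are all conditional on that structural claim. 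Until that lemma is proved --- and your proposed reduction by descent count is a reasonable place to try --- the argument does not get off the ground, and the conjecture remains open, which matches the paper's own assessment.
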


Assuming Conjecture~5.11,
a natural further step would be to compute the limits $\lambda_{n,\ell}$ or to
study more precisely the asymptotic behaviour of the ratios $g_{n,k}/k^{2(n-2)}$.
In particular, we hope that computing arbitrarily precise
approximations of the constants $\lambda_{n,\ell}$
for small values of $n$ might help us guess analytic values of $\lambda_{n,\ell}$,
thereby providing insight about the underlying combinatorial or
number-theory-related structure of the integers $g_{n,k}$.

\section*{Acknowledgments}
The author is very thankful to an anonymous referee for his (her) insightful remarks and suggestions,
which helped both simplifying several proofs and improving the overall readability of the article.


\end{document}